\titleclass{\part}{top}
\titleformat{\part}[display]
{\huge\bfseries\centering}{\partname~\thepart}{0pt}{}
\titlespacing*{\part}{0pt}{160pt}{40pt}
\newcommand{\R}{\mathbb{R}}
\newcommand{\N}{\mathbb{N}}
\DeclareMathOperator{\e}{\operatorname{e}}
\renewcommand{\epsilon}{\varepsilon}
\newcommand{\widetildelow}[1]{\stackrel{\sim}{\smash{#1}\rule{0pt}{1.5pt}}}
\newcommand{\1}{\mathds{1}}
\newcommand{\mathbbm}[1]{\1}
\title{One dimensional martingale rearrangement couplings}
\author{B. Jourdain\thanks{Université Paris-Est, Cermics (ENPC), INRIA F-77455 Marne-la-Vallée, France. E-mails: benjamin.jourdain@enpc.fr, william.margheriti@enpc.fr - This research benefited from the support of the \textquotedblleft Chaire Risques Financiers\textquotedblright , Fondation du Risque.} \and W. Margheriti\footnotemark[1]}
\date{\today}
\numberwithin{equation}{section}
\theoremstyle{plain}
\newtheorem{prooff}{Proof}[section]
\newtheorem{lemma}[prooff]{Lemma}
\newtheorem{prop2}[prooff]{Proposition}
\newtheorem{proposition}[prooff]{Proposition}
\newtheorem{corollary}[prooff]{Corollary}
\theoremstyle{definition}
\newtheorem{example}[prooff]{Example}
\newtheorem{rk}[prooff]{Remark}
\newtheorem{remark}[prooff]{Remark}
\theoremstyle{definition}
\theoremstyle{plain}
\newcommand\x{0.5}
\begin{document}
	\maketitle
	
	\begin{abstract} We are interested in martingale rearrangement couplings. As introduced by Wiesel \cite{Wi20} in order to prove the stability of Martingale Optimal Transport problems, these are projections in adapted Wasserstein distance of couplings between two probability measures on the real line in the convex order onto the set of martingale couplings between these two marginals. In reason of the lack of relative compactness of the set of couplings with given marginals for the adapted Wasserstein topology, the existence of such a projection is not clear at all. Under a barycentre dispersion assumption on the original coupling which is in particular satisfied by the Hoeffding-Fréchet or comonotone coupling, Wiesel gives a clear algorithmic construction of a martingale rearrangement when the marginals are finitely supported and then gets rid of the finite support assumption by relying on a rather messy limiting procedure to overcome the lack of relative compactness. Here, we give a direct general construction of a martingale rearrangement coupling under the barycentre dispersion assumption. This martingale rearrangement is obtained from the original coupling by an approach similar to the construction we gave in \cite{JoMa18} of the inverse transform martingale coupling, a member of a family of martingale couplings close to the Hoeffding-Fréchet coupling, but for a slightly different injection in the set of extended couplings introduced by Beiglböck and Juillet \cite{BeJu16b} and  which involve the uniform distribution on $[0,1]$ in addition to the two marginals. 
          We last discuss the stability in adapted Wassertein distance of the inverse transform martingale coupling with respect to the marginal distributions.		
	\end{abstract}
	
	{\bf Keywords:} Martingale couplings, Martingale Optimal Transport, Adapted Wasserstein distance, Robust finance, Convex order.
	
	\section{Introduction}
	
	Let $\rho\ge1$ and $\mu,\nu$ be in the set $\mathcal P_\rho(\R)$ of probability measures on the real line with finite order $\rho$ moment. We denote by $\Pi(\mu,\nu)$ the set of couplings between $\mu$ and $\nu$, that is $\pi\in\Pi(\mu,\nu)$ iff $\pi$ is a measure on $\R\times\R$ with first marginal $\mu$ and second marginal $\nu$. We denote by $\Pi^{\mathrm M}(\mu,\nu)$ the set of martingale couplings between $\mu$ and $\nu$:
	\begin{equation}\label{defSetCouplings}
	\Pi^{\mathrm M}(\mu,\nu)=\left\{M\in\Pi(\mu,\nu)\mid\mu(dx)\textrm{-a.e.},\ \int_\R y\,M_x(dy)=x\right\},
	\end{equation}
	where for any coupling $\pi\in\Pi(\mu,\nu)$ we denote by $(\pi_x)_{x\in\R}$ its disintegration with respect to its first marginal, that is $\pi(dx,dy)=\mu(dx)\,\pi_x(dy)$, or with a slight abuse of notation $\pi=\mu\times \pi_x$. The celebrated Strassen theorem \cite{St65} ensures that $\Pi^{\mathrm M}(\mu,\nu)\neq\emptyset$ iff $\mu$ and $\nu$ are in the convex order, which we denote $\mu\le_{cx}\nu$, that is iff $\int_\R f(x)\,\mu(dx)\le\int_\R f(y)\,\nu(dy)$ for any convex function $f:\R\to\R$.
	
	Fix $\pi\in\Pi(\mu,\nu)$. We are interested in finding a projection of $\pi$ on the set $\Pi^{\mathrm M}(\mu,\nu)$ for the adapted Wasserstein distance $\mathcal{AW}_\rho$ (defined in \eqref{eq:defAW} below), that is finding a martingale coupling $M$ between $\mu$ and $\nu$ such that \begin{equation}\label{projectionDistanced}
	\mathcal{AW}_\rho(\pi,M)=\inf_{M'\in\Pi^{\mathrm M}(\mu,\nu)}\mathcal{AW}_\rho(\pi,M').
	\end{equation}
	
	This problem arose our interest when Wiesel \cite{Wi20} highlighted its connection for $\rho=1$ with the stability of the Martingale Optimal Transport (MOT) problem. The MOT problem was introduced in discrete time by Beiglböck, Henry-Labordère and Penkner \cite{BeHePe12} and in continuous time by Galichon, Henry-
	Labordère and Touzi \cite{GaHeTo13} in order to get model-free bounds of an option price. It consists in the classical Optimal Transport problem, which was formulated by Gaspard Monge \cite{Mo81} in 1781 and modernised by Kantorovich \cite{Ka42} in 1942, to which an additional martingale constraint is added in order to reflect the arbitrage-free condition of the market. In our setting the MOT problem consists in the minimisation
	\begin{equation}
	\label{MOTdef}
	\tag{MOT}
	\operatorname{MOT}(\mu,\nu):=\inf_{M\in\Pi^{\mathrm M}(\mu,\nu)}\int_{\R\times\R}C(x,y)\,M(dx,dy),
	\end{equation}
	where $C:\R\times\R\to\R_+$ is a nonnegative measurable payoff function. The study of its stability, that is the continuity of the map $(\mu,\nu)\mapsto\operatorname{MOT}(\mu,\nu)$, represents a major stake, since it confirms the robustness of model-free bounds of an option price. Backhoff-Veraguas and Pammer \cite{BaPa19} gave a positive answer under mild regularity assumptions by showing the stability of the so called martingale $C$-monotonicity property, which is proved sufficient for optimality. Independently, Wiesel \cite{Wi20} also gave a positive answer. More recently, Beiglböck, Pammer and the two authors generalised those stability results to the weak MOT problem \cite{BeJoMaPa2}. For adaptations of celebrated results on classical
	optimal transport theory to the MOT problem, we refer to Beiglböck and Juillet \cite{BeJu16}, Henry-Labordère, Tan and Touzi \cite{HeTato} and Henry-
	Labordère and Touzi \cite{HeTo13}. On duality, we
	refer to Beiglböck, Nutz and Touzi \cite{BeNuTo16}, Beiglböck, Lim and Obłój \cite{BeLiOb} and De March \cite{De18}. We also refer to De March \cite{De18b} and De March and Touzi \cite{DeTo17} for the multi-dimensional case.
	
	We recall that the Wasserstein distance with index $\rho$ between $\mu$ and $\nu$ is defined by
	\begin{equation}\label{defWasserstein}
	\mathcal W_\rho(\mu,\nu)=\inf_{\pi\in\Pi(\mu,\nu)}\left(\int_{\R\times\R}\vert x-y\vert^\rho\,\pi(dx,dy)\right)^{1/\rho}.
	\end{equation}
 This family was meant to be as close as possible to the Hoeffding-Fréchet coupling $\pi^{HF}$ between $\mu$ and $\nu$, that is the image of the Lebesgue measure on $(0,1)$ by $u\mapsto(F_\mu^{-1}(u),F_\nu^{-1}(u))$, where $F_\eta^{-1}$ denotes the quantile function of a probability measure $\eta$ on $\R$.	The infimum is attained by the comonotonic or Hoeffding-Fréchet coupling $\pi^{HF}$ between $\mu$ and $\nu$, that is the image of the Lebesgue measure on $(0,1)$ by $u\mapsto(F_\mu^{-1}(u),F_\nu^{-1}(u))$, where $F_\eta^{-1}(u)=\inf\{x\in\R:\eta((-\infty,x])\ge u\}$ denotes the quantile function of a probability measure $\eta$ on $\R$. As a consequence, \begin{equation}\label{Wassersteinrhod1}
	\mathcal W_\rho(\mu,\nu)=\left(\int_{(0,1)}|F_\mu^{-1}(u)-F_\nu^{-1}(u)|^\rho du\right)^{1/\rho}.
      \end{equation}
      
	The topology induced by the Wasserstein distance is not always well suited for any setting, especially in mathematical finance. Indeed, the symmetry of this distance does not take into account the temporal structure of martingales. One can easily get convinced that two stochastic processes very close in Wasserstein distance can yield radically unalike information, as \cite[Figure 1]{BaBaBeEd19a} illustrates very well. Therefore, one needs to strengthen, or adapt this usual topology. This can be done in many different ways, such as the adapted weak topology (see below), Hellwig's information topology \cite{He96}, Aldous's extended weak topology \cite{Al81} or the optimal stopping topology \cite{BaBaBeEd19b}. Strikingly, all those apparently independent topologies are actually equal, at least in discrete time \cite[Theorem 1.1]{BaBaBeEd19b}.
	
	Hence it induces no loss of generality to focus on the so called adapted Wasserstein distance. For an extensive background, we refer to \cite{PfPi12,PfPi14,PfPi15,PfPi16,Las18,BiTa18}. For all $\mu',\nu'\in\mathcal P_\rho(\R)$ and $\pi'\in\Pi(\mu',\nu')$, the adapted Wasserstein distance with index $\rho$ between $\pi$ and $\pi'$ is defined by
	\begin{equation}\label{eq:defAW}
	\mathcal{AW}_\rho(\pi,\pi') =\inf_{\chi \in \Pi(\mu,\mu')} \left(\int_{\R\times\R} \left(\vert x-x'\vert^\rho + \mathcal W_\rho^\rho(\pi_x,\pi'_{x'})\right)\, \chi(dx,dx')\right)^{1/\rho}.
	\end{equation}
	
	Note that by Lemma \ref{existenceOptimalAWrhocoupling} below there always exists a coupling $\chi\in\Pi(\mu,\mu)$ optimal for $\mathcal{AW}_\rho(\pi,\pi')$. Moreover it is easy to check that $\mathcal W_\rho\le\mathcal{AW}_\rho$, so that $\mathcal{AW}_\rho$ induces a finer topology than $\mathcal W_\rho$. 
        Wiesel \cite{Wi20} studies Problem \eqref{projectionDistanced} for $\rho=1$ and introduces the notion of martingale rearrangement: a martingale coupling $M\in\Pi^{\mathrm M}(\mu,\nu)$ is called a martingale rearrangement coupling of $\pi$ if 
	\begin{equation}\label{defMartReag}
	\mathcal{AW}_1(\pi,M)=\inf_{M'\in\Pi^{\mathrm M}(\mu,\nu)}\mathcal{AW}_1(\pi,M').
	\end{equation}
Actually, he works with the nested Wasserstein distance, which according to the appendix is equal to the adapted Wasserstein distance. 
	In the present paper, even if we mainly concentrate on martingale rearrangements, we will also consider a slight extension of the latter definition: a martingale coupling $M\in\Pi^{\mathrm M}(\mu,\nu)$ is called an  $\mathcal{AW}_\rho$-minimal martingale rearrangement coupling of $\pi$ if 
	\begin{equation}\label{defMartReagrho}
	\mathcal{AW}_\rho(\pi,M)=\inf_{M'\in\Pi^{\mathrm M}(\mu,\nu)}\mathcal{AW}_\rho(\pi,M').
	\end{equation}
	
	Note that the existence of an $\mathcal{AW}_\rho$-minimal martingale rearrangement coupling is not clear in the general case. Indeed, let $(M_n)_{n\in\N}$ be a sequence of martingale couplings between $\mu$ and $\nu$ such that $(\mathcal{AW}_\rho(\pi,M_n))_{n\in\N}$ converges to $\mathcal{AW}_\rho(\pi,M)$. The tightness of the marginals $\mu$ and $\nu$ guarantees tightness and therefore relative compactness of $(M_n)_{n\in\N}$ for the $\mathcal W_\rho$-distance, but not necessarily for the $\mathcal{AW}_\rho$-distance. In order to compensate this lack of relative compactness, Wiesel \cite{Wi20} introduces a new assumption: the coupling $\pi\in\Pi(\mu,\nu)$ is said to satisfy the barycentre dispersion assumption iff
	\begin{equation}\label{dispersionAssumption}
	\forall a\in\R,\quad\int_\R\1_{[a,+\infty)}(x)\left(x-\int_\R y\,\pi_x(dy)\right)\,\mu(dx)\le0.
	\end{equation}
	
	The latter assumption is important in this context since it provides a sufficient condition for a coupling $\pi$ between $\mu$ and $\nu$ to admit a martingale rearrangement coupling. More precisely, Wiesel shows \cite[Lemma 2.1]{Wi20} that in the general case,
	\begin{equation}\label{borneinfepsilon}
	\inf_{M'\in\Pi^{\mathrm M}(\mu,\nu)}\mathcal{AW}_1(\pi,M')\ge\int_\R\left\vert\int_\R y\,\pi_x(dy)-x\right\vert\,\mu(dx),
	\end{equation}
	and there exists $M\in\Pi^{\mathrm M}(\mu,\nu)$ such that $\mathcal{AW}_1(\pi,M)=\int_\R\left\vert\int_\R y\,\pi_x(dy)-x\right\vert\,\mu(dx)$ when $\pi$ satisfies the barycentre dispersion assumption \eqref{dispersionAssumption} \cite[Proposition 2.4]{Wi20}.
	
	The problem \eqref{projectionDistanced} was in a certain way already considered by Rüschendorf \cite{Ru84}, who looked for a projection of a probability measure on a set of probability measures with given linear constraints. Since the martingale constraint is linear, his study encompasses our problem. Yet he considered the projection with respect to the Kullback-Leibler distance, also known as relative entropy, in place of $\mathcal{AW}_\rho$ and this does not suit our purpose. More recently, Gerhold and Gülüm looked at a very similar problem \cite[Problem 2.4]{GeGu} for the infinity Wasserstein distance, the Prokhorov distance, the stop-loss distance, the Lévy distance or modified versions of them. Once again, despite being of great interest in their setting, in particular for their application to the existence of a market model which is consistent with a finite set of European call options prices on a single underlying asset \cite{GeGu20}, their choice of distance is still inadequate for a connection with the stability of \eqref{MOTdef}.
	
	In Section \ref{sec:Martingale rearrangement couplingsBDA} we briefly recall Wiesel's construction \cite{Wi20} of a martingale rearrangement coupling of any coupling $\pi$ which satisfies the barycentre dispersion assumption \eqref{dispersionAssumption}. Then we design our own construction of a martingale rearrangement coupling of $\pi$. This construction is actually done through lifted couplings, in the sense of Beiglböck and Juillet \cite{BeJu16b}, that is probability measures on the enlarged space $(0,1)\times\R\times\R$ in which the spatial domain $\R\times\R$ of regular couplings is embedded.
	
	Our construction in Section \ref{sec:Martingale rearrangement couplingsBDA} is highly inspired of the one we did in \cite{JoMa18}, where we designed a family $(M^Q)_{Q\in\mathcal Q}$ of martingale couplings between $\mu$ and $\nu$ parametrised by a set $\mathcal Q$ of probability measures on $(0,1)^2$. This family was meant to be as close as possible to the Hoeffding-Fréchet coupling $\pi^{HF}$ between $\mu$ and $\nu$. As proved by Wiesel \cite[Lemma 2.3]{Wi20}, $\pi^{HF}$ satisfies the barycentre dispersion assumption \eqref{dispersionAssumption}. We show in section \ref{sec:Martingale rearrangement couplingsHF} that the lifted coupling associated with any element of $(M^Q)_{Q\in\mathcal Q}$ is, in a very natural sense, a lifted martingale rearrangement coupling of a lift of $\pi^{HF}$. At the level of regular couplings on $\R\times\R$, we can conclude the same as soon as the sign of $F_\nu^{-1}-F_\mu^{-1}$ is constant on the jumps of $F_\mu$, which holds when $F_\nu^{-1}-F_\mu^{-1}$ is constant on these jumps and 
        $\pi^{HF}$ is concentrated on the graph of the Monge transport map $T=F_\nu^{-1}\circ F_\mu$. 
	
	We showed in \cite{JoMa18} that a particular element stands out from the family $(M^Q)_{Q\in\mathcal Q}$, the so called inverse transform martingale coupling. We finally show in Section \ref{sec:StabilityITMC} the stability of the inverse transform martingale coupling for the $\mathcal{AW}_\rho$-distance with respect to its marginals. The latter stability holds in full generality at the lifted level but a condition on the first marginals is needed at the level of regular couplings.

Let us now recall some standard results about cumulative distribution functions and quantile functions since they will prove very handy one-dimensional tools. Proofs can be found for instance in \cite[Appendix]{JoMa18}. For any probability measure $\eta$ on $\R$, denoting by $F_\eta(x)=\eta((-\infty,x])$ and $F_\eta^{-1}(u)=\inf\{x\in\R:F_\eta(x)\ge u\}$ the cumulative distribution function and the quantile function of $\eta$, we have 
\begin{enumerate}[(1)]
	\item\label{it:Fcadlag2} $F_\eta$, resp. $F_\eta^{-1}$, is right continuous, resp. left continuous, and nondecreasing;
	\item\label{it:inequality equivalence quantile function2} For all $(x,u)\in\R\times(0,1)$,
	\begin{equation}\label{eq:equivalence quantile cdf2}
	F_\eta^{-1}(u)\le x\iff u\le F_\eta(x),
	\end{equation}
	which implies
	\begin{align}\label{eq:jumps F 2}
	&F_\eta(x-)<u\le F_\eta(x)\implies x=F_\eta^{-1}(u),\\
	\label{eq:jumps F2 2}
	\textrm{and}\quad&F_\eta(F_\eta^{-1}(u)-)\le u\le F_\eta(F_\eta^{-1}(u));
	\end{align}
	\item\label{it:Fminus of F 2} For $\eta(dx)$-almost every $x\in\R$,
	\begin{equation}\label{eq:F-1circF 2}
	0<F_\eta(x),\quad F_\eta(x-)<1\quad\text{and}\quad F_\eta^{-1}(F_\eta(x))=x;
	\end{equation}
	\item\label{it:inverse transform sampling 2} The image of the Lebesgue measure on $(0,1)$ by $F_\eta^{-1}$ is $\eta$. This property is referred to as inverse transform sampling.
	\item Denoting by $\lambda_{(0,1)}$, resp. $\lambda_{(0,1)^2}$, the Lebesgue measure on $(0,1)$, resp. $(0,1)^2$ and setting
	\begin{equation}\label{deftheta}
	\theta(x,v)=F_\mu(x-)+v\mu(\{x\})\quad\textrm{for}\quad(x,v)\in\R\times[0,1],
	\end{equation}
	we have
	\begin{equation}\label{copule 2}
	\left((u,v)\mapsto \theta(F_\mu^{-1}(u),v)\right)_\sharp\lambda_{(0,1)^2}=\lambda_{(0,1)},
	\end{equation}
	where $\sharp$ denotes the pushforward operation. Coupled with the inverse transform sampling we also have the equivalent formulation
	\begin{equation}\label{copule 2 IT}
	\theta_\sharp(\mu\times\lambda_{(0,1)})=\lambda_{(0,1)}.
	\end{equation}
\end{enumerate}
	\section{Martingale rearrangements of couplings which satisfy the barycentre dispersion assumption}
\label{sec:Martingale rearrangement couplingsBDA}

\subsection{Regular and lifted martingale rearrangement couplings}
\label{subsec:liftedMarCouplings}

By \eqref{Wassersteinrhod1}  for the first equality and the inverse transform sampling for the second one, we have for $\eta,\eta'\in\mathcal P_1(\R)$,
\begin{align}
   \mathcal W_1(\eta,\eta')=\int_{(0,1)}\left|F_\eta^{-1}(u)-F_{\eta'}^{-1}(u)\right|du\ge\left|
\int_{(0,1)}F_\eta^{-1}(u)du-\int_{(0,1)}F_{\eta'}^{-1}(u)du\right|=\left|
  \int_{\R}x\,\eta(dx)-\int_{\R}x\,\eta'(dx)\right|.\label{compw1}\end{align}
The inequality is an equality iff either $\forall u\in (0,1)$, $F_\eta^{-1}(u)\le F_{\eta'}^{-1}(u)$ i.e. $\eta$ is smaller then $\eta'$ for the stochastic order which we denote $\eta\le_{st}\eta'$ or $\forall u\in (0,1)$, $F_\eta^{-1}(u)\ge F_{\eta'}^{-1}(u)$ i.e. $\eta\ge_{st}\eta'$.

Let $\mu,\nu\in\mathcal P_1(\R)$ such that $\mu\le_{cx}\nu$. We are now ready to reproduce the proof of \cite[Lemma 2.1]{Wi20} to check \eqref{borneinfepsilon}. For $M\in\Pi^{\mathrm M}(\mu,\nu)$ and $\chi\in\Pi(\mu,\mu)$ we have, using \eqref{compw1} then the triangle inequality, 
	\begin{align}\label{derivationEpsilon}\begin{split}
	\int_{\R\times\R}\left(\vert x-x'\vert+\mathcal W_1(\pi_x,M_{x'})\right)\,\chi(dx,dx')&\ge\int_{\R\times\R}\left(\vert x-x'\vert+\left\vert\int_\R y\,\pi_x(dy)-x'\right\vert\right)\,\chi(dx,dx')\\
	&\ge\int_\R\left\vert\int_\R y\,\pi_x(dy)-x\right\vert\,\mu(dx).
	\end{split}
	\end{align}
When $\pi$ satisfies the barycentre dispersion assumption \eqref{dispersionAssumption}, finding a martingale rearrangement coupling of $\pi$ amounts to find a martingale coupling such that the inequalities in \eqref{derivationEpsilon} are equalities. This observation leads to the following lemma.

\begin{lemma}\label{CNSmartreagordresto} Let $\mu,\nu\in\mathcal P_1(\R)$ and $\pi\in\Pi(\mu,\nu)$ satisfy the barycentre dispersion assumption \eqref{dispersionAssumption}. Then $M\in\Pi^{\mathrm M}(\mu,\nu)$ is a martingale rearrangement coupling of $\pi$ iff there exists $\chi\in\Pi(\mu,\mu)$ such that $\chi(dx,dx')$-almost everywhere,
	\begin{equation}\label{CNSmartreagordrestoeq}
	x<x'\implies \pi_x\ge_{st}M_{x'},\quad x>x'\implies \pi_x\le_{st}M_{x'}\quad\textrm{and}\quad x=x'\implies \pi_x\le_{st} M_{x}\textrm{ or }\pi_x\ge_{st}M_{x},
	\end{equation}
	in which case $\chi$ is optimal for $\mathcal{AW}_1(\pi,M)$.
\end{lemma}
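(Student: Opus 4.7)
The plan is to exploit the chain of inequalities \eqref{derivationEpsilon} together with the fact that, under the barycentre dispersion assumption, the lower bound $\int_\R |b_x - x|\,\mu(dx)$ (where I write $b_x := \int_\R y\,\pi_x(dy)$) is actually attained over $\Pi^{\mathrm M}(\mu,\nu)$, as recalled from \cite[Proposition 2.4]{Wi20}. Hence $M$ is a martingale rearrangement of $\pi$ iff $\mathcal{AW}_1(\pi, M) = \int_\R |b_x - x|\,\mu(dx)$, and by Lemma \ref{existenceOptimalAWrhocoupling} this is equivalent to the existence of some $\chi \in \Pi(\mu, \mu)$ that achieves $\mathcal{AW}_1(\pi, M)$ and saturates both inequalities in \eqref{derivationEpsilon}, in which case such a $\chi$ is automatically optimal for $\mathcal{AW}_1(\pi, M)$.

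I would then identify the pointwise equality conditions $\chi$-almost everywhere. Since $M$ is a martingale, $M_{x'}$ has mean $x'$, so the first inequality in \eqref{derivationEpsilon} (an instance of \eqref{compw1} applied to the pair $(\pi_x, M_{x'})$) is an equality iff the two conditional measures are comparable for the stochastic order, as already observed right after \eqref{compw1}. The second inequality is the ordinary triangle inequality $|x - x'| + |b_x - x'| \ge |b_x - x|$, whose equality case is exactly the condition $(x - x')(b_x - x') \le 0$, i.e. $x'$ lies between $x$ and $b_x$.

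It remains to combine both conditions into the trichotomy of the statement. When $x < x'$, the requirement that $x'$ lies between $x$ and $b_x$ forces $x < x' \le b_x$; coupled with the comparison of means, only the ordering $\pi_x \ge_{st} M_{x'}$ is compatible (the alternative $\pi_x \le_{st} M_{x'}$ would give $b_x \le x'$, which is incompatible with $b_x > x'$, and which in the boundary case $b_x = x'$ forces $\pi_x = M_{x'}$ so that $\ge_{st}$ also holds). The case $x > x'$ is symmetric, and $x = x'$ makes the triangle inequality trivial, leaving stochastic-order comparability as the sole remaining constraint. Conversely, each condition in the statement readily yields both equalities: e.g. $x < x'$ with $\pi_x \ge_{st} M_{x'}$ gives $b_x \ge x'$, hence $x < x' \le b_x$, so the Wasserstein equality and the triangle equality hold pointwise; integrating against $\chi$ recovers the value $\int_\R |b_x - x|\,\mu(dx)$, which yields $\mathcal{AW}_1(\pi, M) \le \inf_{M'} \mathcal{AW}_1(\pi, M')$ and therefore equality.

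The main (but minor) obstacle is the careful treatment of the boundary case $b_x = x'$ when extracting the direction of the stochastic order; this relies on the elementary fact that two probability measures on $\R$ sharing the same finite mean that are comparable for the stochastic order must coincide (the nonnegative difference $F_{\eta'}^{-1} - F_\eta^{-1}$ has vanishing integral on $(0,1)$, hence is zero Lebesgue-a.e.).
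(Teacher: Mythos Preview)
Your proof is correct and follows essentially the same route as the paper's: both invoke \cite[Proposition 2.4]{Wi20} to identify the optimal value under the barycentre dispersion assumption, then analyse the two equality cases in \eqref{derivationEpsilon} (stochastic comparability of $\pi_x$ and $M_{x'}$; $x'$ lying between $x$ and $b_x$), and finally combine them via the monotonicity of the mean with respect to $\le_{st}$ to obtain the trichotomy. Your explicit treatment of the boundary case $b_x = x'$ is slightly more detailed than the paper's, which simply says the conclusion ``is easily deduced'', but the underlying argument is the same.
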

\begin{proof}Suppose that $M$ is a martingale rearrangement coupling of $\pi$ and $\chi$ is optimal for $\mathcal{AW}_1(\pi,M)$. Since $\pi$ satisfies the barycentre dispersion assumption, we know by \cite[Proposition 2.4]{Wi20} that $\mathcal{AW}_1(M,\pi)=\int_\R\left\vert\int_\R y\,\pi_x(dy)-x\right\vert\,\mu(dx)$. Then the first inequality in \eqref{derivationEpsilon} is an equality, hence $\chi(dx,dx')$-almost everywhere, $\mathcal W_1(\pi_x,M_{x'})=\left\vert\int_\R y\,\pi_x(dy)-x'\right\vert$, or equivalently $\pi_x$ and $M_{x'}$ are comparable in the stochastic order. Morever the second inequality in \eqref{derivationEpsilon} is an equality as well, hence $\chi(dx,dx')$-almost everywhere, $x'$ lies between $x$ and $\int_\R y\,\pi_x(dy)$. We deduce that $\chi(dx,dx')$-almost everywhere,
	\begin{equation}\label{CNSmartreagordersto2}
	(x-x')\left(x'-\int_\R y\,\pi_x(dy)\right)\ge0,
	\end{equation}
	and $\pi_x\le_{st}M_{x'}$ or $\pi_x\ge_{st}M_{x'}$.	Then \eqref{CNSmartreagordrestoeq} is easily deduced from the fact that the map $\eta\mapsto\int_\R z\,\eta(dz)$ is increasing for the stochastic order.
	
	Conversely, suppose that \eqref{CNSmartreagordrestoeq} and therefore \eqref{CNSmartreagordersto2} holds for some $\chi\in\Pi(\mu,\mu)$. Then the inequalities in \eqref{derivationEpsilon} are equalities, hence $\chi$ is optimal for $\mathcal{AW}_1(\pi,M)$ and $M$ is a martingale rearrangement coupling of $\pi$.
\end{proof}	

To construct a martingale rearrangement coupling of $\pi$ satisfying the barycenter dispersion assumption \eqref{dispersionAssumption}, we will define a probability kernel $(m_u)_{u\in(0,1)}$ such that $\int_\R y\,m_u(dy)=F_\mu^{-1}(u)$ $du$-a.e. and deduce that the probability measure
\begin{equation}\label{defMQ2exemple}
M(dx,dy)=\int_0^1\delta_{F_\mu^{-1}(u)}(dx)\,m_u(dy)\,du
\end{equation}
is a martingale coupling between $\mu$ and $\nu$. Yet the probability kernel $(m_u)_{u\in(0,1)}$ is not uniquely determined from the knowledge of $M$. Hence the definition \eqref{defMQ2exemple} induces a loss of information. In order to keep this information, one can consider like Beiglböck and Juillet \cite{BeJu16b} instead of $M$ its lifted martingale coupling
\begin{equation}\label{liftedITMCbis}
\widehat M(du,dx,dy)=\lambda_{(0,1)}(du)\,\delta_{F_\mu^{-1}(u)}(dx)\,m_u(dy)\in\Pi(\lambda_{(0,1)},\mu,\nu),
\end{equation}
where $\lambda_{(0,1)}$ denotes the Lebesgue measure on $(0,1)$. More generally, for any $\pi\in\Pi(\mu,\nu)$, we call lifted coupling of $\pi$ any coupling $\widehat\pi\in\Pi(\lambda_{(0,1)},\mu,\nu)$ such that there exists a probability kernel $(p_u)_{u\in(0,1)}$ which satisfies
\[
\widehat\pi(du,dx,dy)=\lambda_{(0,1)}(du)\,\delta_{F_\mu^{-1}(u)}(dx)\,p_u(dy)\quad\textrm{and}\quad\int_{u\in(0,1)}\widehat\pi(du,dx,dy)=\pi(dx,dy).
\]

We denote by $\widehat\Pi(\mu,\nu)$ the set of all lifted couplings between $\mu$ and $\nu$. Notice that there exists an easy embedding
\begin{equation}\label{embeddingLifted}
\iota:\Pi(\mu,\nu)\to\widehat\Pi(\mu,\nu),\quad\pi\mapsto\lambda_{(0,1)}(du)\,\delta_{F_\mu^{-1}(u)}(dx)\,\pi_{F_\mu^{-1}(u)}(dy).
\end{equation}

For $\widehat\pi=\lambda_{(0,1)}\times\widehat\pi_u=\lambda_{(0,1)}\times\delta_{F_\mu^{-1}(u)}\times p_u$ and $\widehat\pi'=\lambda_{(0,1)}\times\widehat\pi'_u=\lambda_{(0,1)}\times\delta_{F_\mu^{-1}(u)}\times p'_u$ two lifted couplings of $\pi\in\Pi(\mu,\nu)$ and $\pi'\in\Pi(\mu',\nu')$, we define their lifted adapted Wasserstein distance of order $\rho$ by
\begin{align*}
\widehat{\mathcal{AW}}_\rho(\pi,\pi')&=\inf_{\chi\in\Pi(\lambda_{(0,1)},\lambda_{(0,1)})}\left(\int_{(0,1)\times(0,1)}\left(\vert u-u'\vert^\rho+\mathcal{AW}_\rho^\rho(\widehat\pi_u,\widehat\pi'_{u'})\right)\,\chi(du,du')\right)^{1/\rho}\\
&=\inf_{\chi\in\Pi(\lambda_{(0,1)},\lambda_{(0,1)})}\left(\int_{(0,1)\times(0,1)}\left(\vert u-u'\vert^\rho+\vert F_\mu^{-1}(u)-F_{\mu'}^{-1}(u')\vert^\rho+\mathcal W_\rho^\rho(p_u,p'_{u'})\right)\,\chi(du,du')\right)^{1/\rho}.
\end{align*}

Note that by Remark \ref{rkexistenceOptimalAWrhocoupling} below there always exists a coupling $\chi\in\Pi(\lambda_{(0,1)},\lambda_{(0,1)})$ optimal for $\widehat{\mathcal{AW}}_\rho(\widehat\pi,\widehat\pi')$. We denote by $\widehat\Pi^{\mathrm M}(\mu,\nu)$ the set of all lifted martingale couplings between $\mu$ and $\nu$, that is the set of all lifted couplings $\lambda_{(0,1)}\times\delta_{F_\mu^{-1}(u)}\times m_u\in\widehat\Pi(\mu,\nu)$ such that $\int_\R y\,m_u(dy)=F_\mu^{-1}(u)$ for $du$-almost all $u\in(0,1)$. For $\rho\ge1$, we then call lifted $\widehat{\mathcal{AW}}_\rho$-minimal martingale rearrangement coupling (or simply lifted martingale rearrangement coupling when $\rho=1$) of $\widehat\pi\in\widehat\Pi(\mu,\nu)$ any lifted martingale coupling $\widehat M\in\widehat\Pi^{\mathrm M}(\mu,\nu)$ such that
\[
\widehat{\mathcal{AW}}_\rho(\widehat\pi,\widehat M)=\inf_{\widehat M'\in\widehat\Pi^{\mathrm M}(\mu,\nu)}\widehat{\mathcal{AW}}_\rho(\widehat\pi,\widehat M').
\]

Ignoring the non-negative contribution of $|u-u'|$ in the definition of $\widehat{\mathcal{AW}}_1$ and reasoning like in \eqref{derivationEpsilon}, we easily check the following lower bound analogous, at the lifted level, to \eqref{borneinfepsilon}. 
\begin{lemma}\label{lemEpsilon} Let $\mu,\nu\in\mathcal P_1(\R)$ be such that $\mu\le_{cx}\nu$. Then for all $\widehat\pi=\lambda_{(0,1)}\times\delta_{F_\mu^{-1}(u)}\times p_u\in\widehat\Pi(\mu,\nu)$,
	\[
	\inf_{\widehat M\in\widehat\Pi^{\mathrm M}(\mu,\nu)}\widehat{\mathcal{AW}}_1(\widehat\pi,\widehat M)\ge\int_{(0,1)}\left\vert\int_\R y\,p_u(dy)-F_\mu^{-1}(u)\right\vert\,du.
	\]
	%
	%
\end{lemma}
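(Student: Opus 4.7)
The plan is to mirror, at the lifted level, the two-step derivation of inequality \eqref{borneinfepsilon} carried out in \eqref{derivationEpsilon}, using precisely the hint given just before the lemma: drop the non-negative contribution $|u-u'|$ from the integrand of $\widehat{\mathcal{AW}}_1$, and then combine \eqref{compw1} with the triangle inequality.

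I would fix an arbitrary lifted martingale coupling $\widehat M=\lambda_{(0,1)}\times\delta_{F_\mu^{-1}(u)}\times m_u\in\widehat\Pi^{\mathrm M}(\mu,\nu)$, so in particular $\int_\R y\, m_u(dy)=F_\mu^{-1}(u)$ for $du$-a.e.\ $u$, and an arbitrary $\chi\in\Pi(\lambda_{(0,1)},\lambda_{(0,1)})$. Since $\widehat\pi$ and $\widehat M$ have the same Dirac first disintegration $\delta_{F_\mu^{-1}(\cdot)}$, the definition of $\widehat{\mathcal{AW}}_1$ reduces the integrand to $|u-u'|+|F_\mu^{-1}(u)-F_\mu^{-1}(u')|+\mathcal W_1(p_u,m_{u'})$. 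Dropping the first summand and applying \eqref{compw1} to the last one, together with the martingale identity $\int_\R y\, m_{u'}(dy)=F_\mu^{-1}(u')$, I obtain
\begin{align*}
\int_{(0,1)^2}\!\!\bigl(|u-u'|+|F_\mu^{-1}(u)-F_\mu^{-1}(u')|+\mathcal W_1(p_u,m_{u'})\bigr)\chi(du,du')
\ge \int_{(0,1)^2}\!\!\left(|F_\mu^{-1}(u)-F_\mu^{-1}(u')|+\left|\int_\R y\, p_u(dy)-F_\mu^{-1}(u')\right|\right)\chi(du,du').
\end{align*}

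Next, I apply the triangle inequality in the form $|F_\mu^{-1}(u)-F_\mu^{-1}(u')|+|\int_\R y\, p_u(dy)-F_\mu^{-1}(u')|\ge |\int_\R y\, p_u(dy)-F_\mu^{-1}(u)|$, so the right-hand side is bounded below by $\int_{(0,1)^2}|\int_\R y\, p_u(dy)-F_\mu^{-1}(u)|\,\chi(du,du')$. Since the first marginal of $\chi$ is $\lambda_{(0,1)}$, this last quantity equals $\int_{(0,1)}|\int_\R y\, p_u(dy)-F_\mu^{-1}(u)|\,du$, which is independent of both $\chi$ and $\widehat M$. Taking the infimum over $\chi\in\Pi(\lambda_{(0,1)},\lambda_{(0,1)})$ and then over $\widehat M\in\widehat\Pi^{\mathrm M}(\mu,\nu)$ yields the announced bound.

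There is no real obstacle here: everything is a direct transcription of the proof of \eqref{borneinfepsilon} to the enlarged space $(0,1)\times\R\times\R$, made even cleaner by the fact that both first disintegrations are already the Dirac $\delta_{F_\mu^{-1}(u)}$, so the contribution $|F_\mu^{-1}(u)-F_\mu^{-1}(u')|$ of the middle coordinates appears explicitly and plays exactly the role of $|x-x'|$ in \eqref{derivationEpsilon}. The only point worth a brief sanity check is that the lower bound is indeed independent of the choice of disintegrating kernel $p_u$ of $\widehat\pi$ and of $m_u$ of $\widehat M$, which is the case since the left-hand side is intrinsic to $\widehat\pi$ and $\widehat M$ and the right-hand side is $\int_{(0,1)}|\int_\R y\, p_u(dy)-F_\mu^{-1}(u)|\,du=\int_\R|\int_\R y\,\pi_x(dy)-x|\,\mu(dx)$, recovering \eqref{borneinfepsilon} upstairs.
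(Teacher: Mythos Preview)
Your proof is correct and follows precisely the route the paper indicates: drop the $|u-u'|$ term, apply \eqref{compw1} together with the martingale barycentre identity, then the triangle inequality, exactly mirroring \eqref{derivationEpsilon}.

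One small caveat concerning your final sanity-check remark: the equality $\int_{(0,1)}\bigl|\int_\R y\,p_u(dy)-F_\mu^{-1}(u)\bigr|\,du=\int_\R\bigl|\int_\R y\,\pi_x(dy)-x\bigr|\,\mu(dx)$ is not true for a general lifted coupling $\widehat\pi$. By \eqref{copule 2 IT} one has $\pi_x=\int_0^1 p_{\theta(x,v)}\,dv$, so Jensen's inequality only gives $\int_\R\bigl|\int_\R y\,\pi_x(dy)-x\bigr|\,\mu(dx)\le\int_{(0,1)}\bigl|\int_\R y\,p_u(dy)-F_\mu^{-1}(u)\bigr|\,du$, with equality when $u\mapsto p_u$ is constant on the jumps of $F_\mu$ (e.g.\ for $\widehat\pi=\iota(\pi)$). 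This does not affect your argument for the lemma itself, which is sound.
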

The next proposition gives a sufficient condition for the collapse through \eqref{defMQ2exemple} of a lifted martingale coupling to be a martingale rearrangement.
\begin{proposition}\label{lienmartreagliftednonlifted} Let $\mu,\nu\in\mathcal P_1(\R)$ be such that $\mu\le_{cx}\nu$. Let $\widehat\pi=\lambda_{(0,1)}\times\delta_{F_\mu^{-1}(u)}\times p_u\in\widehat\Pi(\mu,\nu)$ be such that $u\mapsto p_u$ is constant on the jumps of $F_\mu$, that is constant on the intervals $(F_\mu(x-),F_\mu(x)]$, $x\in\R$, which is trivially satisfied when $\mu$ is atomless. Suppose that 
  $\widehat M=\lambda_{(0,1)}\times\delta_{F_\mu^{-1}(u)}\times m_u\in\widehat\Pi^{\mathrm M}(\mu,\nu)$ is such that
	\[
	\int_{(0,1)}\mathcal W_1(p_u,m_u)\,du\le\int_{(0,1)}\left\vert\int_\R y\,p_u(dy)-F_\mu^{-1}(u)\right\vert\,du.
	\]
	
	Then the martingale coupling $M(dx,dy)=\int_{u\in(0,1)}\delta_{F_\mu^{-1}(u)}(dx)\,m_u(dy)\,du$ is a martingale rearrangement coupling of $\pi=\int_{u\in(0,1)}\delta_{F_\mu^{-1}(u)}(dx)\,p_u(dy)\,du$ which satisfies
	\[
	\mathcal{AW}_1(\pi,M)=\int_\R\mathcal W_1(\pi_x,M_x)\,\mu(dx)=\int_\R\left\vert\int_\R y\,\pi_x(dy)-x\right\vert\,\mu(dx).
	\]
      \end{proposition}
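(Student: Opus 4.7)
The plan is to show $\mathcal{AW}_1(\pi,M)=\int_\R |\int_\R y\,\pi_x(dy)-x|\,\mu(dx)$ by sandwiching: the upper bound will combine the diagonal coupling in $\Pi(\mu,\mu)$ with the hypothesis, while the matching lower bound is the general inequality \eqref{borneinfepsilon} applied to the martingale coupling $M$.

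I would first verify that $M\in\Pi^{\mathrm M}(\mu,\nu)$: both marginals follow directly from those of $\widehat M$ together with inverse transform sampling, and the martingale property follows from $\int_\R y\,m_u(dy)=F_\mu^{-1}(u)$ after averaging. The key preliminary observation relies on \eqref{copule 2 IT}: pushing $\mu\times\lambda_{(0,1)}$ through $(x,v)\mapsto\theta(x,v)$ recovers $\lambda_{(0,1)}$, and $F_\mu^{-1}(\theta(x,v))=x$ for $\mu\times\lambda_{(0,1)}$-almost every $(x,v)$. This yields on the one hand the disintegration $M_x=\int_{(0,1)} m_{\theta(x,v)}\,dv$ and on the other hand, thanks to the constancy hypothesis on $p_u$, the identification $\pi_x=p_{\theta(x,v)}$ (the right-hand side being constant in $v$). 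As a by-product of the same change of variables,
\[
\int_{(0,1)} \Bigl|\int_\R y\,p_u(dy)-F_\mu^{-1}(u)\Bigr|\,du=\int_\R \Bigl|\int_\R y\,\pi_x(dy)-x\Bigr|\,\mu(dx).
\]

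The main estimate then uses the diagonal coupling $\chi=(\mathrm{id},\mathrm{id})_\sharp\mu\in\Pi(\mu,\mu)$ and the convexity of $\mathcal W_1$ with respect to mixtures:
\[
\mathcal{AW}_1(\pi,M)\le\int_\R\mathcal W_1(\pi_x,M_x)\,\mu(dx)\le\int_\R\int_{(0,1)}\mathcal W_1(p_{\theta(x,v)},m_{\theta(x,v)})\,dv\,\mu(dx)=\int_{(0,1)}\mathcal W_1(p_u,m_u)\,du,
\]
where the second inequality applies convexity to the identifications $\pi_x=\int_{(0,1)} p_{\theta(x,v)}\,dv$ and $M_x=\int_{(0,1)} m_{\theta(x,v)}\,dv$, and the last equality is \eqref{copule 2 IT}. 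Combining this with the assumed bound and the companion identity just derived, $\mathcal{AW}_1(\pi,M)\le\int_\R |\int_\R y\,\pi_x(dy)-x|\,\mu(dx)$. Since the reverse inequality is exactly \eqref{borneinfepsilon} applied to $M$, equality propagates through all intermediate quantities, proving the two displayed equalities simultaneously and identifying $M$ as a martingale rearrangement of $\pi$.

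The delicate step is the passage between the $u$-indexed and $x$-indexed integrals through $\theta$: the constancy hypothesis on $p_u$ is precisely what allows the identification $\pi_x=p_{\theta(x,v)}$ (it is automatic outside the jumps of $F_\mu$), while $m_u$ may well vary on the jumps of $F_\mu$, which is why convexity of $\mathcal W_1$ must be invoked to absorb the $v$-average into the single distribution $M_x$.
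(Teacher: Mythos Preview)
Your argument is correct and follows essentially the same route as the paper: both proofs reduce to the inequality $\int_\R \mathcal W_1(\pi_x,M_x)\,\mu(dx)\le\int_\R\bigl|\int_\R y\,\pi_x(dy)-x\bigr|\,\mu(dx)$ via \eqref{borneinfepsilon}, establish it by passing through $\theta$ and \eqref{copule 2 IT}, use convexity of $\mathcal W_1$ to go from the $v$-averaged kernels to $\int_{(0,1)}\mathcal W_1(p_u,m_u)\,du$, and then invoke the constancy of $u\mapsto p_u$ on the jumps of $F_\mu$ to turn the $u$-integral of absolute deviations into the $x$-integral. The only cosmetic difference is that you isolate the ``companion identity'' first whereas the paper derives it at the end of the chain.
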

      Of course, under the hypotheses, $\widehat{\mathcal{AW}}_1(\widehat\pi,\widehat M)\le \int_{(0,1)}\mathcal W_1(p_u,m_u)\,du\le\int_{(0,1)}\left\vert\int_\R y\,p_u(dy)-F_\mu^{-1}(u)\right\vert\,du$ so that, by Lemma \ref{lemEpsilon}, 
      these inequalities are equalities 
      and $\widehat M$ is a lifted martingale rearrangement of $\widehat\pi$.
\begin{proof} By \eqref{borneinfepsilon} it suffices to show that
	\begin{equation}\label{CSlienliftednonlifted}
	\int_\R\mathcal W_1(\pi_x,M_x)\,\mu(dx)\le\int_\R\left\vert\int_\R y\,\pi_x(dy)-x\right\vert\,\mu(dx).
	\end{equation}
	
	For $(x,v)\in\R\times(0,1)$, let $\theta(x,v)=F_\mu(x-)+v\mu(\{x\})$. Using \eqref{copule 2 IT} and the fact that $F_\mu^{-1}(\theta(x',v))=x'$ for all $(x',v)\in\R\times(0,1)$ , we get
	\begin{align}\label{kernelPi}\begin{split}
	\pi(dx,dy)&=\int_{u\in(0,1)}\delta_{F_\mu^{-1}(u)}(dx)\,p_u(dy)\,du=\int_{(x',v)\in\R\times(0,1)}\delta_{x'}(dx)\,p_{\theta(x',v)}(dy)\,\mu(dx')\,dv\\
	&=\int_{v\in(0,1)}\mu(dx)\,p_{\theta(x,v)}(dy)\,dv.
	\end{split}\end{align}
	
	Hence we have $\mu(dx)$-almost everywhere $\pi_x(dy)=\int_0^1p_{\theta(x,v)}(dy)\,dv$, and similarly we find $M_x(dy)=\int_0^1m_{\theta(x,v)}(dy)\,dv$. Using \eqref{copule 2 IT} for the first and last equality, we deduce that
	\begin{align*}
	\int_\R\mathcal W_1(\pi_x,M_x)\,\mu(dx)&\le\int_{\R\times(0,1)}\mathcal W_1(m_{\theta(x,v)},p_{\theta(x,v)})\,\mu(dx)\,dv\\
	&=\int_{(0,1)}\mathcal W_1(m_u,p_u)\,du\\
	&\le\int_{(0,1)}\left\vert\int_\R y\,p_u(dy)-F_\mu^{-1}(u)\right\vert\,du\\
	&=\int_{\R\times(0,1)}\left\vert\int_\R y\,p_{\theta(x,v)}(dy)-F_\mu^{-1}(\theta(x,v))\right\vert\,\mu(dx)\,dv.
	\end{align*}
	
	For $(x,v)\in\R\times(0,1)$, $F_\mu^{-1}(\theta(x,v))=x$, and since $u\mapsto p_u$ is constant on the jumps of $F_\mu$, the map $v\mapsto p_{\theta(x,v)}$ is constant on $(0,1)$, hence
	\begin{align*}
	\int_{(0,1)}\left\vert\int_\R y\,p_{\theta(x,v)}(dy)-F_\mu^{-1}(\theta(x,v))\right\vert\,dv&=\left\vert\int_{\R\times(0,1)} y\,p_{\theta(x,v)}(dy)\,dv-x\right\vert.
	\end{align*}
	
	We deduce that
	\[
	\int_\R\mathcal W_1(\pi_x,M_x)\,\mu(dx)\le\int_\R\left\vert\int_{\R\times(0,1)} y\,p_{\theta(x,v)}(dy)\,dv-x\right\vert\,\mu(dx)=\int_\R\left\vert\int_\R y\,\pi_x(dy)-x\right\vert\,\mu(dx),
	\]
	which proves \eqref{CSlienliftednonlifted} and concludes the proof.
\end{proof}

\subsection{Construction of an explicit martingale rearrangement coupling}\label{secmartreang}

We recall that a coupling $\pi\in\Pi(\mu,\nu)$ between two probability measures $\mu,\nu\in\mathcal P_1(\R)$ in the convex order satisfies the barycentre dispersion assumption formulated by Wiesel \cite{Wi20} iff
\begin{equation}\label{dispersionAssumption2}
\forall a\in\R,\quad\int_\R\1_{[a,+\infty)}(x)\left(x-\int_\R y\,\pi_x(dy)\right)\,\mu(dx)\le0.
\end{equation}

First we briefly recall Wiesel's construction \cite{Wi20} of a martingale rearrangement coupling of a coupling $\pi$ which satisfies \eqref{dispersionAssumption2}, which is well perceivable as soon as $\pi$ has finite support but becomes rather implicit in the general case. Then we design our own construction of such a martingale rearrangement coupling, whose intelligibility does not depend on the finiteness of the support of $\pi$. 
Since the Hoeffding-Fréchet satisfies \eqref{dispersionAssumption2} \cite[Lemma 2.3]{Wi20}, this construction extends the study made in Section \ref{sec:Martingale rearrangement couplingsHF}.

Let $\mu,\nu\in\mathcal P_1(\R)$ be such that $\mu\le_{cx}\nu$ and $\mu\neq\nu$ and $\pi\in\Pi(\mu,\nu)$ be a coupling between $\mu$ and $\nu$ which satisfies the barycentre assumption \eqref{dispersionAssumption2}. Suppose first that $\pi$ has finite support and is not a martingale coupling between $\mu$ and $\nu$. Denoting $S$ the finite support of $\mu$ and for all $x\in S$, $S_x$ the finite support of $\pi_x$, the latter is equivalent to say that there exists $x\in S$ such that $\int_\R y\,\pi_x(dy)\neq x$. As Wiesel \cite{Wi20} points out, the barycentre dispersion assumption \eqref{dispersionAssumption2} and the convex order between $\mu$ and $\nu$ imply the existence of $x^-,x^+\in S$, $y^-\in S_{x^-}$ and $y^+\in S_{x^+}$ such that
\[
\int_\R y\,\pi_{x^-}(dy)<x^-,\quad x^+<\int_\R y\,\pi_{x^+}(dy)\quad\text{and}\quad y^-<y^+.
\]

He then switches as much as possible the mass at $y^-$ and $y^+$ of $\pi_{x^-}$ and $\pi_{x^+}$ in order to rectify the barycentres. More precisely, he defines for all $x\in S$
\[
\pi^{(1)}_x=\1_{\{x\notin\{x^-,x^+\}\}}\,\pi_x+\1_{\{x=x^-\}}\left(\pi_{x^-}+\frac{\lambda}{\mu(\{x^-\})}(\delta_{y^+}-\delta_{y^-})\right)+\1_{\{x=x^+\}}\left(\pi_{x^+}+\frac{\lambda}{\mu(\{x^+\})}(\delta_{y^-}-\delta_{y^+})\right),
\]
where $\lambda\ge0$ is taken as large as possible, so that
\[
\text{either}\quad\pi^{(1)}_{x^-}(\{y^-\})=0,\quad\pi^{(1)}_{x^+}(\{y^+\})=0,\quad\int_\R y\,\pi^{(1)}_{x^+}(dy)=x^+\quad\text{or}\quad\int_\R y\,\pi_{x^-}(dy)=x^-.
\]

Then the measure $\pi^{(1)}(dx,dy)=\mu(dx)\,\pi^{(1)}_x(dy)$ is a coupling between $\mu$ and $\nu$ which satisfies the barycentre dispersion assumption \eqref{dispersionAssumption2}. Repeating inductively this process yields a sequence $(\pi^{(n)}_x)_{x\in\R}$ of couplings between $\mu$ and $\nu$ which satisfy \eqref{dispersionAssumption2}. By finiteness of $S$, the latter sequence is constant for $n$ large enough and the limit is precisely a martingale rearrangement coupling of $\pi$.

In the general case, there exists by \cite[Lemma 4.1]{Wi20} a sequence $(\pi^n)_{n\in\N^*}$ of finitely supported measures such that $\mathcal W_1^{nd}(\pi^n,\pi)\le1/n$ for all $n\in\N^*$. The marginals $\mu_n$ and $\nu_n$ of $\pi^n$ are not in the convex order, but a mere adaptation of the previous reasoning yields the existence of a coupling $\pi^n_{mr}$ between $\mu_n$ and $\nu_n$ which is almost a martingale rearrangement coupling of $\pi^n$, in the sense that
\begin{equation}\label{almostmartreag}
\int_\R\left\vert x-\int_\R y\,(\pi^n_{mr})_x(dy)\right\vert\,\mu^n(dx)\le\frac1n\quad\text{and}\quad\mathcal W_1^{nd}(\pi^n_{mr},\pi^n)\le\int_\R\left\vert x-\int_\R y\,\pi^n_x(dy)\right\vert\,\mu^n(dx).
\end{equation}

Then Wiesel shows the existence of a coupling $\pi_{mr}$ between $\mu$ and $\nu$ such that $\mathcal W_1^{nd}\left(\frac1n\sum_{k=1}^n\pi^k_{mr},\pi_{mr}\right)$ vanishes as $n$ goes to $+\infty$. By \eqref{almostmartreag} taken to the limit $n\to+\infty$ and \eqref{borneinfepsilon} he deduces that $\pi_{mr}$ is a martingale rearrangement coupling of $\pi$.

We now propose an alternate construction of a martingale rearrangement coupling of $\pi$, regardless of the finiteness of its support, deduced from a lifted martingale coupling. For all $u\in[0,1]$, let
\begin{equation}\label{GDelta+Delta-}
G(u)=\int_\R y\,\pi_{F_\mu^{-1}(u)}(dy),\quad\Delta_+(u)=\int_0^u(F_\mu^{-1}-G)^+(v)\,dv\quad\text{and}\quad\Delta_-(u)=\int_0^u(F_\mu^{-1}-G)^-(v)\,dv.
\end{equation}

Let us show that \eqref{dispersionAssumption2} is equivalent to
\begin{equation}\label{comparaisonPsi}
\forall u\in[0,1],\quad\Delta_+(u)\ge\Delta_-(u).
\end{equation}

Using \eqref{eq:equivalence quantile cdf2} we see that for all $u\in(0,1)$ and $a\in\R$, $u>F_\mu(a-)\implies F_\mu^{-1}(u)\ge a\implies u\ge F_\mu(a-)$. By the latter implications and the inverse transform sampling we deduce that \eqref{dispersionAssumption2} is equivalent to
\[
\forall a\in\R,\quad\int_{F_\mu(a-)}^1(F_\mu^{-1}(u)-G(u))\,du\le0.
\]

Since $\Delta_+(1)=\Delta_-(1)$, consequence of the equality of the respective means of $\mu$ and $\nu$, we deduce that it is equivalent to
\[
\forall a\in\R,\quad \Delta_+(F_\mu(a-))\ge\Delta_-(F_\mu(a-)).
\]

By right continuity of $F_\mu$, for all $a\in\R$ we have $F_\mu(a)=\lim_{h\to0,h>0}F_\mu((a+h)-)$, so by continuity of $\Delta_+$ and $\Delta_-$ we also have $\Delta_+(F_\mu(a))\ge\Delta_-(F_\mu(a))$ for all $a\in\R$. Moreover, for all $a\in\R$ such that $\mu(\{a\})>0$ and $u\in(F_\mu(a-),F_\mu(a)]$, we have by \eqref{eq:jumps F 2} that $F_\mu^{-1}(u)=a$, so $\Delta_+$ and $\Delta_-$ are affine on $(F_\mu(a-),F_\mu(a)]$. We deduce that we also have $\Delta_+\ge\Delta_-$ on $(F_\mu(a-),F_\mu(a)]$, hence the equivalence with \eqref{comparaisonPsi}.

We define
\begin{align*}
&\mathcal U^+=\{u\in(0,1)\mid F_\mu^{-1}(u)>G(u)\},\quad\mathcal U^-=\{u\in(0,1)\mid F_\mu^{-1}(u)<G(u)\},\\
&\text{and}\quad\mathcal U^0=\{u\in(0,1)\mid F_\mu^{-1}(u)=G(u)\},
\end{align*}
and thanks to the equality $\Delta_+(1)=\Delta_-(1)$ we can set for all $u\in[0,1]$
\[
\phi(u)=\left\{\begin{array}{rcl}
\Delta_-^{-1}(\Delta_+(u))&\text{if}&u\in\mathcal U^+;\\
\Delta_+^{-1}(\Delta_-(u))&\text{if}&u\in\mathcal U^-;\\
u&\text{if}&u\in\mathcal U^0.
\end{array}
\right.
\]

Applying \cite[Lemma 6.1]{JoMa18} again with $f_1=(F_\mu^{-1}-G)^+$, $f_2=(F_\mu^{-1}-G)^-$, $u_0=1$ and $h:u\mapsto\1_{\{G(\phi(u))\le F_\mu^{-1}(\phi(u))\}}$ yields
\[
\int_0^1\1_{\{G(\phi(u))\le F_\mu^{-1}(\phi(u))\}}\,d\Delta_+(u)=\int_0^1\1_{\{G(v)\le F_\mu^{-1}(v)\}}\,d\Delta_-(u)=0.
\]

Similarly, we get $\int_0^1\1_{\{G(\phi(u))\ge F_\mu^{-1}(\phi(u))\}}\,d\Delta_-(u)=0$. We deduce that
\begin{equation}\label{varphiBonCompagnon}
\phi(u)\in\mathcal U^-,\quad\textrm{resp. }\phi(u)\in\mathcal U^+,\quad\text{for $du$-almost all }u\in\mathcal U^+,\quad\text{resp. }\mathcal U^-.
\end{equation}

This allows us to define for $du$-almost all $u\in\mathcal U^+\cup\mathcal U^-$
\begin{equation}\label{defp(u)}
p(u)=\frac{G(\phi(u))-F_\mu^{-1}(\phi(u))}{F_\mu^{-1}(u)-G(u)+G(\phi(u))-F_\mu^{-1}(\phi(u))}\cdot
\end{equation}

Notice that \eqref{varphiBonCompagnon} implies that for $du$-almost all $u\in\mathcal U^+$, $\phi(\phi(u))=\Delta_+^{-1}(\Delta_-(\Delta_-^{-1}(\Delta_+(u))))$. Since $\Delta_-$ is continuous we have $\Delta_-(\Delta_-^{-1}(v))=v$ for all $v\in[0,\Delta_-(1)]$, and using \eqref{eq:F-1circF 2} after an appropriate normalisation we get $\Delta_+^{-1}(\Delta_+(v))=v$ for $dv$-almost all $v\in\mathcal U^+$. We deduce that
\begin{equation}\label{PhiPhi=u}
u=\phi(\phi(u)),\quad\text{$du$-almost everywhere on $\mathcal U^+$}.
\end{equation}

Similarly, $\phi(\phi(u))=u$ for all $du$-almost all $u\in\mathcal U^-$. We deduce that
\begin{equation}\label{varphiInvolution}
\text{for $du$-almost all $u\in\mathcal U^+\cup\mathcal U^-$},\quad\phi(\phi(u))=u,
\end{equation}
and
\begin{equation}\label{1-ppvarphi}
\text{for $du$-almost all $u\in\mathcal U^+\cup\mathcal U^-$},\quad p(\phi(u))=\frac{G(u)-F_\mu^{-1}(u)}{F_\mu^{-1}(\phi(u))-G(\phi(u))+G(u)-F_\mu^{-1}(u)}=1-p(u).
\end{equation}

In order to define the appropriate martingale kernel, we rely on the following lemma which allows us to inject some stochastic order in the construction, a convenient tool for the computation of Wasserstein distances. We recall that two probability measures $\mu$ and $\nu$ on the real line are said to be in the stochastic order, denoted $\mu\le_{st}\nu$, iff $F_\mu^{-1}(u)\le F_\nu^{-1}(u)$ for all $u\in[0,1]$. Since the Hoeffding-Fréchet coupling between $\mu$ and $\nu$ is optimal for $\mathcal W_1(\mu,\nu)$, this implies by the inverse transform sampling that $\mathcal W_1(\mu,\nu)=\int_\R y\,\nu(dy)-\int_\R x\,\mu(dx)$.

\begin{lemma}\label{constructionITMCbis} Let $\mathfrak B$ be the set of all quadruples $(y,\widetilde y,\mu,\widetilde\mu)\in\R\times\R\times\mathcal P_1(\R)\times\mathcal P_1(\R)$ such that $\mu$ and $\widetilde\mu$ have respective means $x$ and $\widetilde x$ and $x<y\le\widetilde y<\widetilde x$. Endow $\mathcal P_1(\R)$ with the Borel $\sigma$-algebra of the weak convergence topology and $\mathfrak B$ with the trace of the product $\sigma$-algebra on $\R\times\R\times\mathcal P_1(\R)\times\mathcal P_1(\R)$.
	
	Then there exist two measurable maps $\beta,\widetilde\beta:\mathfrak B\to\mathcal P_1(\R)$ such that for all $(y,\widetilde y,\mu,\widetilde\mu)$, denoting $\nu=\beta(y,\widetilde y,\mu,\widetilde\mu)$, $\widetilde\nu=\widetilde\beta(y,\widetilde y,\mu,\widetilde\mu)$ and $p=\frac{\widetildelow x-\widetildelow y}{y-x+\widetildelow x-\widetildelow y}$ where $x$ and $\widetilde x$ are the respective means of $\mu$ and $\widetilde\mu$, we have
	\begin{equation}\label{eq:constructionITMCbis}
	\int_\R w\,\nu(dw)=y,\quad\int_\R w\,\widetilde\nu(dw)=\widetilde y,\quad\mu\le_{st}\nu,\quad\widetilde\nu\le_{st}\widetilde \mu\quad\text{and}\quad p\nu+(1-p)\widetilde\nu=p\mu+(1-p)\widetilde\mu.
	\end{equation}
\end{lemma}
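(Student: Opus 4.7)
The plan is to construct $\nu$ and $\widetilde\nu$ by transferring mass between $\mu$ and $\widetilde\mu$ via a nonnegative measure $\gamma$ on $\R^2$ supported on the upper triangle $\{(z_1,z_2):z_1\le z_2\}$, with marginals $\gamma_1\le p\mu$ and $\gamma_2\le(1-p)\widetilde\mu$ and transport value $\int(z_2-z_1)\,d\gamma=p(y-x)$. Given such a $\gamma$, the idea is to set
\[
\nu:=\mu+p^{-1}(\gamma_2-\gamma_1),\qquad \widetilde\nu:=\widetilde\mu+(1-p)^{-1}(\gamma_1-\gamma_2).
\]
Then $\nu$ and $\widetilde\nu$ are nonnegative (since $\gamma_1\le p\mu$ and $\gamma_2\le(1-p)\widetilde\mu$) and carry total mass one by cancellation; their means are $y$ and $\widetilde y$ by the transport-value constraint combined with the identity $p(y-x)=(1-p)(\widetilde x-\widetilde y)$; the mixture identity $p\nu+(1-p)\widetilde\nu=p\mu+(1-p)\widetilde\mu$ follows by telescoping; and the stochastic orderings follow from the triangular support of $\gamma$: for every $t\in\R$, the inclusion $\{z_2\le t\}\subseteq\{z_1\le t\}$ yields $\gamma_2((-\infty,t])\le\gamma_1((-\infty,t])$, so $\nu((-\infty,t])\le\mu((-\infty,t])$ and symmetrically $\widetilde\mu((-\infty,t])\le\widetilde\nu((-\infty,t])$.

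The remaining task is to exhibit such $\gamma$. The plan is to take a scalar multiple of the product measure restricted to the upper triangle:
\[
\gamma:=\lambda\,p\,(\mu\otimes\widetilde\mu)\,\big|_{\{(z_1,z_2):z_1\le z_2\}},\qquad \lambda:=\frac{y-x}{\E[(Z_2-Z_1)_+]},
\]
where $Z_1\sim\mu$, $Z_2\sim\widetilde\mu$ are independent. The denominator is strictly positive since $\E[(Z_2-Z_1)_+]\ge\E[Z_2-Z_1]=\widetilde x-x>0$, and a direct computation yields $\int(z_2-z_1)\,d\gamma=\lambda p\,\E[(Z_2-Z_1)_+]=p(y-x)$. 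The marginal bounds $\gamma_1\le p\mu$ and $\gamma_2\le(1-p)\widetilde\mu$ reduce respectively (by pointwise domination of the densities $(1-F_{\widetilde\mu}(z_1-))$ and $F_\mu(z_2)$ by $1$) to the scalar inequalities $\lambda\le 1$ and $\lambda\le(1-p)/p$. The first holds because $\lambda\le(y-x)/(\widetilde x-x)\le 1$ using $y\le\widetilde y<\widetilde x$. The second, after rewriting $(1-p)/p=(y-x)/(\widetilde x-\widetilde y)$ from the definition of $p$, reduces (dividing by $y-x>0$) to $\widetilde x-\widetilde y\le\widetilde x-x$, i.e., $x\le\widetilde y$, which is the hypothesis $x<y\le\widetilde y$.

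For measurability, each ingredient depends continuously (hence Borel measurably) on the data in the relevant topology: $x$ and $\widetilde x$ are continuous functionals on $\mathcal P_1(\R)$; $\E[(Z_2-Z_1)_+]$ is continuous on $\mathcal P_1(\R)\times\mathcal P_1(\R)$ by continuity of the product-measure map together with the fact that $(z_1,z_2)\mapsto(z_2-z_1)_+$ is continuous with linear growth, hence integrates continuously against $\mathcal W_1$-convergent product measures; the scalar $\lambda$ and the restriction to $\{z_1\le z_2\}$ and pushforward operations are then Borel. Consequently $\beta,\widetilde\beta:\mathfrak B\to\mathcal P_1(\R)$ are Borel measurable.

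The main technical hurdle is the tightness of the two scalar inequalities $\lambda\le 1$ and $\lambda\le(1-p)/p$ in the second paragraph: both go through only because every piece of the hypothesis $x<y\le\widetilde y<\widetilde x$ is used, so removing any one of the four inequalities would break the bound on $\lambda$. The construction thus uses precisely the full strength of the defining inequalities of $\mathfrak B$.
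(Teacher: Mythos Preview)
Your argument is correct and takes a genuinely different route from the paper's. The paper works at the level of quantile functions: it defines auxiliary functions $J,\widetilde J$ obtained by splicing together pieces of $F_\mu^{-1}$ and $F_{\widetilde\mu}^{-1}$, finds a threshold $q_\star$ at which $J(q_\star)=y$ (or $\widetilde J(q_\star)=\widetilde y$), and then defines $\nu,\widetilde\nu$ as pushforwards of Lebesgue measure on $(0,1)$ by the corresponding spliced quantile maps. The stochastic comparisons come from the monotonicity of quantile functions together with a derivative inequality at $q_\star$, and the mixture identity is checked by a direct change of variables. Measurability is obtained by showing that $q_\star$ depends measurably on the data, which requires an argument based on the concavity/convexity of $J,\widetilde J$.

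Your construction is more elementary: you transfer mass via the single measure $\gamma=\lambda p\,(\mu\otimes\widetilde\mu)|_{\{z_1\le z_2\}}$, and all five required properties follow from short algebraic checks on the marginals and on the scalar $\lambda$. The two scalar bounds $\lambda\le 1$ and $\lambda\le(1-p)/p$ indeed use exactly the hypothesis $x<y\le\widetilde y<\widetilde x$, as you point out. What the paper's approach buys is that its $\nu$ and $\widetilde\nu$ are given by explicit quantile expressions, in keeping with the inverse-transform theme of the surrounding constructions; your approach buys brevity and avoids the case split on $p\le\tfrac12$ versus $p>\tfrac12$. The resulting pairs $(\nu,\widetilde\nu)$ are in general different.

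One small point worth making explicit in your measurability paragraph: you argue continuity of the mean and of $\E[(Z_2-Z_1)_+]$ with respect to $\mathcal W_1$, but the statement equips $\mathcal P_1(\R)$ with the Borel $\sigma$-algebra of the \emph{weak} topology. These two Borel $\sigma$-algebras coincide on $\mathcal P_1(\R)$ (the paper itself invokes this fact), so your reasoning goes through, but it would be cleaner to say so. The restriction map $\eta\mapsto\eta|_{\{z_1\le z_2\}}$ is only Borel, not continuous, as you correctly note; a one-line justification (pointwise limit of integrations against continuous cutoffs) would round off the argument.
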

In particular, $p\,\delta_y(dz)\,\nu(dw)+(1-p)\,\delta_{\widetildelow y}(dz)\,\widetilde\nu(dw)$ is a martingale coupling between $p\delta_y(dz)+(1-p)\delta_{\widetildelow y}(dz)$ and $p\mu(dw)+(1-p)\widetilde\mu(dw)$, and $\mathcal W_1(\mu,\nu)=y-x$, $\mathcal W_1(\widetilde \mu,\widetilde\nu)=\widetilde x-\widetilde y$. The proof, which consists in exhibiting particular maps $\beta$ and $\tilde\beta$, is moved to the end of the present section.

In order to use Lemma \ref{constructionITMCbis} we need to compare $\phi$ to the identity function. The inequality \eqref{comparaisonPsi} is equivalent by appropriate normalisation of \eqref{eq:equivalence quantile cdf2} to $u\ge\Delta_+^{-1}(\Delta_-(u))$ for all $u\in[0,1]$, hence
\begin{equation}\label{varphi<u}
\forall u\in\mathcal U^-,\quad\phi(u)\le u.
\end{equation}

Moreover, by \eqref{varphiInvolution}, \cite[Lemma 6.1]{JoMa18} applied with $f_1=(F_\mu^{-1}-G)^+$, $f_2=(F_\mu^{-1}-G)^-$, $u_0=1$ and $h:u\mapsto\1_{\{u<\phi(u)\}}$ we have
\[
\int_0^1\1_{\{\phi(u)<u\}}\,d\Delta_+(u)=\int_0^1\1_{\{\phi(u)<\phi(\phi(u))\}}\,d\Delta_+(u)=\int_0^1\1_{\{u<\phi(u)\}}\,d\Delta_-(u).
\]

By \eqref{varphi<u} the right-hand side is $0$, hence
\begin{equation}\label{varphi>u}
\text{for $du$-almost all $u\in\mathcal U^+$},\quad \phi(u)\ge u.
\end{equation}

Let
\begin{align*}
&A^+=\{u\in\mathcal U^+\mid F_\mu^{-1}(\phi(u))<G(\phi(u)),\ \phi(\phi(u))=u\text{ and }p(\phi(u))=1-p(u)\}\\
\text{and}\quad&A^-=\{u\in\mathcal U^-\mid F_\mu^{-1}(\phi(u))>G(\phi(u)),\ \phi(\phi(u))=u\text{ and }p(\phi(u))=1-p(u)\}.
\end{align*}

For all $u\in A^+$, we have by definition
\begin{align*}
&\phi(u)\in\mathcal U^-,\quad F_\mu^{-1}(\phi(\phi(u)))=F_\mu^{-1}(u)>G(u)=G(\phi(\phi(u))),\\
&\phi(\phi(\phi(u)))=\phi(u)\quad\text{and}\quad p(\phi(\phi(u)))=p(u)=1-p(\phi(u)),
\end{align*}
hence $\phi(u)\in A^-$. Similarly, for all $u\in A^-$, $\phi(u)\in A^+$. By \eqref{varphiBonCompagnon}, \eqref{varphiInvolution}, \eqref{1-ppvarphi}, 
\eqref{varphi>u} and the monotonicity of $F_\mu^{-1}$, we deduce that $A^+$ and $A^-$ are two disjoint Borel sets such that the Lebesgue measure of $(\mathcal U^+\backslash A^+)\cup(\mathcal U^-\backslash A^-)$ is $0$ and
\begin{align}\label{conditionLemmeConstructionITMCbis}\begin{split}
&\forall u\in A^+,\quad G(u)<F_\mu^{-1}(u)\le F_\mu^{-1}(\phi(u))<G(\phi(u)).
\end{split}
\end{align}

For all $u\in A^+$, $\pi_{F_\mu^{-1}(u)}$ and $\pi_{F_\mu^{-1}(\phi(u))}$ have by definition respective means $G(u)$ and $G(\phi(u))$, so by \eqref{conditionLemmeConstructionITMCbis} we can apply Lemma \ref{constructionITMCbis} with 
\[
(y,\widetilde y,\mu,\widetilde\mu)=(F_\mu^{-1}(u),F_\mu^{-1}(\phi(u)),\pi_{F_\mu^{-1}(u)},\pi_{F_\mu^{-1}(\phi(u))}).
\]

Hence there exist two probability measures $m_u,\widetilde m_u\in\mathcal P_1(\R)$ with respective means $F_\mu^{-1}(u)$, $F_\mu^{-1}(\phi(u))$ and such that
\begin{align}\label{propertiesmITMCbis}\begin{split}
&\pi_{F_\mu^{-1}(u)}\le_{st}m_u,\quad\widetilde m_u\le_{st}\pi_{F_\mu^{-1}(\phi(u))},\\
\text{and}\quad&p(u)m_u+(1-p(u))\widetilde m_u=p(u)\pi_{F_\mu^{-1}(u)}+(1-p(u))\pi_{F_\mu^{-1}(\phi(u))}.
\end{split}
\end{align}

Since $A^+=\phi(A^-)$ and $A^-=\phi(A^+)$, for all $u\in A^-$ we can set $m_u=\widetilde m_{\phi(u)}$, so that
\begin{equation}\label{stochasticITMCbis}
\forall u\in A^+,\quad\pi_{F_\mu^{-1}(u)}\le_{st} m_u\quad\text{and}\quad\forall u\in A^-,\quad m_u\le_{st}\pi_{F_\mu^{-1}(u)},
\end{equation}
and ,
\begin{equation}\label{eqpm}
\forall u\in A^+\cup A^-,\;p(u)m_u+p(\phi(u))m_{\phi(u)}=p(u)\pi_{F_\mu^{-1}(u)}+p(\phi(u))\pi_{F_\mu^{-1}(\phi(u))}.
\end{equation}

Finally, for all $u\in\mathcal U^0\cup(\mathcal U^+\backslash A^+)\cup(\mathcal U^-\backslash A^-)$ set $m_u=\pi_{F_\mu^{-1}(u)}$. By composition of the measurable map $u\mapsto(F_\mu^{-1}(u),F_\mu^{-1}(\phi(u)),\pi_{F_\mu^{-1}(u)},\pi_{F_\mu^{-1}(\phi(u))})$ and the measurable map $\beta$ defined in Lemma \ref{constructionITMCbis}, the map $u\mapsto m_u$ is measurable. By \cite[Theorem 19.12]{Ch06} it is equivalent to say that $(m_u)_{u\in(0,1)}$ is a probability kernel, hence we can define
\begin{equation}\label{defITMCbislifted}
\widehat M(du,dx,dy)=\lambda_{(0,1)}(du)\,\delta_{F_\mu^{-1}(u)}(dx)\,m_u(dy),
\end{equation}
and
\begin{equation}\label{defITMCbis}
M(dx,dy)=\int_0^1\delta_{F_\mu^{-1}(u)}(dx)\,m_u(dy)\,du.
\end{equation}

We now state that $\widehat M$ is a lifted martingale rearrangement coupling of $\widehat\pi=\iota(\pi)$. 

\begin{prop2}\label{martRearrangementBarDisAssumptionLifted} Let $\mu,\nu\in\mathcal P_1(\R)$ be such that $\mu\le_{cx}\nu$ and $\mu\neq\nu$ and $\pi\in\Pi(\mu,\nu)$ be a coupling between $\mu$ and $\nu$ which satisfies the barycentre dispersion assumption \eqref{dispersionAssumption2}. Then the measure $\widehat M$ defined by \eqref{defITMCbislifted} is a lifted martingale rearrangement coupling of the lifted coupling $\widehat\pi=\iota(\pi)$:
	\[
	\inf_{\widehat M'\in\widehat\Pi^{\mathrm M}(\mu,\nu)}\widehat{\mathcal{AW}}_1(\widehat\pi,\widehat M')=\widehat{\mathcal{AW}}_1(\widehat\pi,\widehat M)=\int_{(0,1)}\mathcal W_1(\pi_{F_\mu^{-1}(u)},m_u)\,du=\int_{(0,1)}\left\vert G(u)-F_\mu^{-1}(u)\right\vert\,du.
	\]
\end{prop2}
Since $u\mapsto\pi_{F_\mu^{-1}(u)}$ is constant on the jumps on $F_\mu$ by \eqref{eq:jumps F 2}, we immediately deduce by Proposition \ref{lienmartreagliftednonlifted} that $M$ is a martingale rearrangement coupling of $\pi$.\begin{corollary}\label{martRearrangementBarDisAssumptioncor} Let $\mu,\nu\in\mathcal P_1(\R)$ be such that $\mu\le_{cx}\nu$ and $\mu\neq\nu$ and $\pi\in\Pi(\mu,\nu)$ be a coupling between $\mu$ and $\nu$ which satisfies the barycentre dispersion assumption \eqref{dispersionAssumption2}. Then the measure $M$ defined by \eqref{defITMCbis} is a martingale rearrangement coupling of $\pi$:
	\[
	\inf_{M'\in\Pi^{\mathrm M}(\mu,\nu)}\mathcal{AW}_1(\pi,M')=\mathcal{AW}_1(\pi,M)=\int_\R\mathcal W_1(\pi_x,M_x)\,\mu(dx)=\int_\R\left\vert\int_\R y\,\pi_x(dy)-x\right\vert\,\mu(dx).
	\]
\end{corollary}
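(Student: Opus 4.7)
The plan is to deduce the corollary directly from Proposition \ref{martRearrangementBarDisAssumptionLifted} together with Proposition \ref{lienmartreagliftednonlifted}, both of which are already proved by the time we reach this statement. No new analytic work is required; everything is engineered to make the passage from the lifted level to the regular level immediate.

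First I would verify the two hypotheses of Proposition \ref{lienmartreagliftednonlifted} applied to the lifted coupling $\widehat\pi=\iota(\pi)=\lambda_{(0,1)}\times\delta_{F_\mu^{-1}(u)}\times p_u$ with $p_u=\pi_{F_\mu^{-1}(u)}$, and to the lifted martingale coupling $\widehat M$ defined in \eqref{defITMCbislifted}. Hypothesis one is that $u\mapsto p_u$ is constant on the jumps of $F_\mu$: indeed, for any $x\in\R$ with $\mu(\{x\})>0$ and any $u\in(F_\mu(x-),F_\mu(x)]$, \eqref{eq:jumps F 2} gives $F_\mu^{-1}(u)=x$, so $u\mapsto\pi_{F_\mu^{-1}(u)}$ is indeed constant equal to $\pi_x$ on each such interval. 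Hypothesis two is the inequality
\[
\int_{(0,1)}\mathcal W_1(p_u,m_u)\,du\le\int_{(0,1)}\left\vert\int_\R y\,p_u(dy)-F_\mu^{-1}(u)\right\vert\,du,
\]
which holds in fact with equality by Proposition \ref{martRearrangementBarDisAssumptionLifted}: the proposition asserts $\widehat{\mathcal{AW}}_1(\widehat\pi,\widehat M)=\int_{(0,1)}\mathcal W_1(\pi_{F_\mu^{-1}(u)},m_u)\,du=\int_{(0,1)}\vert G(u)-F_\mu^{-1}(u)\vert\,du$, and by definition $G(u)=\int_\R y\,p_u(dy)$.

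Proposition \ref{lienmartreagliftednonlifted} then yields at once that the collapsed measure $M$ of \eqref{defITMCbis} is a martingale rearrangement coupling of the collapse $\pi=\int_0^1\delta_{F_\mu^{-1}(u)}(dx)\,p_u(dy)\,du$ of $\widehat\pi$ (which is $\pi$ itself, since $\iota$ is a lift of $\pi$), and moreover
\[
\mathcal{AW}_1(\pi,M)=\int_\R\mathcal W_1(\pi_x,M_x)\,\mu(dx)=\int_\R\left\vert\int_\R y\,\pi_x(dy)-x\right\vert\,\mu(dx).
\]
Finally, the identity $\inf_{M'\in\Pi^{\mathrm M}(\mu,\nu)}\mathcal{AW}_1(\pi,M')=\mathcal{AW}_1(\pi,M)$ is nothing but the definition of martingale rearrangement coupling given in \eqref{defMartReag}. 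Since no new estimate is produced and the whole argument is a concatenation of two already established results, there is no real obstacle: the only point to be careful about is to identify $\iota(\pi)$ with the lifted coupling used in Proposition \ref{martRearrangementBarDisAssumptionLifted}, which is immediate from the definition \eqref{embeddingLifted}.
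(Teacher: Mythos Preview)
Your proposal is correct and follows precisely the paper's own argument: the paper deduces the corollary in one sentence by noting that $u\mapsto\pi_{F_\mu^{-1}(u)}$ is constant on the jumps of $F_\mu$ (via \eqref{eq:jumps F 2}) and then invoking Proposition \ref{lienmartreagliftednonlifted}, with the required inequality supplied by Proposition \ref{martRearrangementBarDisAssumptionLifted}. Your write-up merely spells out these two hypothesis checks in slightly more detail.
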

\begin{remark}
   As seen from the proof of Proposition \ref{martRearrangementBarDisAssumptionLifted} just below, for $\widehat M$ defined by \eqref{defITMCbislifted} to be a lifted martingale rearrangement coupling of the lifted coupling $\widehat\pi=\iota(\pi)$ and therefore $M$ defined by \eqref{defITMCbis} to be a martingale rearrangement coupling of $\pi$, it is enough that $u\mapsto m_u$ is measurable, satisfies \eqref{stochasticITMCbis}, \eqref{eqpm} and $m_u=\pi_{F_\mu^{-1}(u)}$ for all $u\in\mathcal U^0\cup(\mathcal U^+\backslash A^+)\cup(\mathcal U^-\backslash A^-)$.
\end{remark}
\begin{proof}[Proof of Proposition \ref{martRearrangementBarDisAssumptionLifted}] Assume for a moment that $\widehat M\in\widehat\Pi^{\mathrm M}(\mu,\nu)$.
	Then we have by \eqref{stochasticITMCbis} that for all $u\in(0,1)$, $\pi_{F_\mu^{-1}(u)}\le_{st}m_u$ or $m_u\le_{st}\pi_{F_\mu^{-1}(u)}$, hence $\mathcal W_1(\pi_{F_\mu^{-1}(u)},m_u)=\vert G(u)-F_\mu^{-1}(u)\vert$ and
	\[
	\widehat{\mathcal{AW}}_1(\widehat\pi,\widehat M)\le\int_{(0,1)}\mathcal W_1(\pi_{F_\mu^{-1}(u)},m_u)\,du=\int_{(0,1)}\vert G(u)-F_\mu^{-1}(u)\vert\,du,
	\]
	which proves the claim by Lemma \ref{lemEpsilon}.
	
	It remains to show that $\widehat M\in\widehat\Pi^{\mathrm M}(\mu,\nu)$. By the inverse transform sampling and the fact that $m_u$ has mean $F_\mu^{-1}(u)$ for all $u\in(0,1)$, it is clear that $\widehat M$ is a lifted martingale coupling between $\mu$ and $\int_{u\in(0,1)}m_u(dy)\,du$. To conclude it is therefore sufficient to check that
	\begin{equation}\label{ITMCbisliftedBonneSecondeMarg}
	\int_{u\in(0,1)}m_u(dy)\,du=\nu.
	\end{equation}
	
	To this end, let $H:[0,1]\to\R$ be measurable and bounded. 
%
	Using \eqref{defp(u)}, \eqref{varphiInvolution} and \cite[Lemma 6.1]{JoMa18} applied with $f_1=(F_\mu^{-1}-G)^+$, $f_2=(F_\mu^{-1}-G)^-$, $u_0=1$ and $h:u\mapsto\frac{H(\phi(u))}{F_\mu^{-1}(\phi(u))-G(\phi(u))+G(u)-F_\mu^{-1}(u)}$ for the third equality, we get
	\begin{align*}
	\int_{\mathcal U^+}(1-p(u))H(u)\,du&=\int_0^1\frac{(F_\mu^{-1}-G)^+(u)}{F_\mu^{-1}(u)-G(u)+G(\phi(u))-F_\mu^{-1}(\phi(u))}H(u)\,du\\
	&=\int_0^1h(\phi(u))\,d\Delta_+(u)\\
	&=\int_0^1h(v)\,d\Delta_-(v)\\
	&=\int_0^1\frac{(F_\mu^{-1}-G)^-(v)}{F_\mu^{-1}(\phi(v))-G(\phi(v))+G(v)-F_\mu^{-1}(v)}H(\phi(v))\,dv\\
	&=\int_{\mathcal U^-}p(\phi(v))H(\phi(v))\,dv.
	\end{align*}
	
	Similarly, we have $\int_{\mathcal U^-}(1-p(u))H(u)\,du=\int_{\mathcal U^+}p(\phi(u))H(\phi(u))\,du$. We deduce that
	\begin{align}\label{integraleHentre0et1}\begin{split}
	\int_0^1H(u)\,du&=\int_{\mathcal U^0}H(u)\,du+\int_{\mathcal U^+}p(u)H(u)\,du+\int_{\mathcal U^+}(1-p(u))H(u)\,du\\
	&\phantom{=}+\int_{\mathcal U^-}p(u)H(u)\,du+\int_{\mathcal U^-}(1-p(u))H(u)\,du\\
	&=\int_{\mathcal U^0}H(u)\,du+\int_{\mathcal U^+\cup\mathcal U^-}(p(u)H(u)+p(\phi(u))H(\phi(u)))\,du.
	\end{split}
	\end{align}
	
	Let $f:\R\to\R$ be measurable and bounded. Using \eqref{integraleHentre0et1} applied with $H:u\mapsto\int_\R f(y)\,m_u(dy)$
	for the first equality, the fact that $m_u=\pi_{F_\mu^{-1}(u)}$ for all $u\in\mathcal U^0$ and \eqref{eqpm} for the second equality, \eqref{integraleHentre0et1} again applied with $H:u\mapsto\int_\R f(y)\,\pi_{F_\mu^{-1}(u)}(dy)$
	for the third equality and the inverse transform sampling for the last equality, we get
	\begin{align*}
	&\int_0^1\int_\R f(y)\,m_u(dy)\,du\\
	&=\int_{\mathcal U^0}\int_\R f(y)\,m_u(dy)\,du+\int_{\mathcal U^+\cup\mathcal U^-}\int_\R f(y)\,(p(u)\,m_u(dy)+p(\phi(u))\,m_{\phi(u)}(dy))\,du\\
	&=\int_{\mathcal U^0}\int_\R f(y)\,\pi_{F_\mu^{-1}(u)}(dy)\,du\\
	&\phantom{=}+\int_{\mathcal U^+\cup\mathcal U-}\int_\R f(y)\,(p(u)\,\pi_{F_\mu^{-1}(u)}(dy)+p(\phi(u))\,\pi_{F_\mu^{-1}(\phi(u))}(dy))\,du\\
	&=\int_0^1\int_\R f(y)\,\pi_{F_\mu^{-1}(u)}(dy)\,du\\
	&=\int_\R f(y)\,\nu(dy),
	\end{align*}
	which shows \eqref{ITMCbisliftedBonneSecondeMarg} and concludes the proof.
\end{proof}

\begin{proof}[Proof of Lemma \ref{constructionITMCbis}] Let $(y,\widetilde y,\mu,\widetilde\mu)\in\mathfrak B$, $x$ and $\widetilde x$ be the respective means of $\mu$ and $\widetilde\mu$ and $p=\frac{\widetildelow x-\widetildelow y}{y-x+\widetildelow x-\widetildelow y}$. First we construct two measures $\nu,\widetilde\nu\in\mathcal P_1(\R)$ which satisfy \eqref{eq:constructionITMCbis}. Then we show that $\nu$ and $\widetilde\nu$ are measurable in $(y,\widetilde y,\mu,\widetilde\mu)$. 
	
	We set for $q\in[0,p\wedge(1-p)]$
	\begin{align}\label{definitionJqJtildeq}\begin{split}
	&J(q):=\int^{\frac{q}{p}}_0F_{\widetildelow \mu}^{-1}\left(\frac{1-pu-p}{1-p}\right)\,du+\int_{\frac{q}{p}}^1F_\mu^{-1}\left(u\right)\,du,\\
	\text{and}\quad&\widetilde J(q):=\int_0^{\frac{q}{1-p}}F_\mu^{-1}\left(\frac{(1-p)u}{p}\right)\,du+\int_0^{\frac{1-q-p}{1-p}}F_{\widetildelow \mu}^{-1}(u)\,du.
	\end{split}
	\end{align}
	
	Since the quantile functions are non-decreasing, the function $J$ is continuous and concave as the sum of two concave functions and the function $\widetilde J$ is continuous and convex as the sum of two convex functions.
	We have $J(0)=\int_0^1F_\mu^{-1}(u)\,du=x$ and $\widetilde J(0)=\int_0^{1}F_{\widetildelow \mu}^{-1}(u)\,du=\widetilde x$ by the inverse transform sampling. If $p\le \frac{1}{2}$, then by the change of variables $v=\frac{1-pu-p}{1-p}$ and since $F_{\widetildelow \mu}^{-1}$ is non-decreasing, we have\begin{align*}
	J(p)&=\frac{1-p}{p}\int_{\frac{1-2p}{1-p}}^1F_{\widetildelow \mu}^{-1}\left(v\right)\,dv\\&=\int_0^1F_{\widetildelow \mu}^{-1}\left(v\right)\,dv+\frac{1-2p}{1-p}\left(\frac{1}{1-\frac{1-2p}{1-p}}\int_{\frac{1-2p}{1-p}}^1F_{\widetildelow \mu}^{-1}\left(v\right)\,dv-\frac{1-p}{1-2p}\int_0^{\frac{1-2p}{1-p}}F_{\widetildelow \mu}^{-1}\left(v\right)\,dv\right)\\&\ge\int_0^1F_{\widetildelow \mu}^{-1}\left(v\right)\,dv= \widetilde x.\end{align*}
	
	Since $x<y<\widetilde x$ and the function $J$ is concave, there is a unique $q_\star\in (0,p)$ such that $J(q_\star)=y$. Moreover, the left-hand derivative of $J$ at $q_\star$ is positive which writes
	\begin{equation}
	F_{\widetildelow \mu}^{-1}\left(\frac{1-q_\star-p}{1-p}+\right)>F_\mu^{-1}\left(\frac{q_\star}{p}\right).\label{compquant}
	\end{equation}
	
	If $p> \frac{1}{2}$, we have  $\widetilde J(1-p)=\int_0^{1}F_\mu^{-1}\left(\frac{(1-p)u}{p}\right)\,du\le\int_0^1F_\mu^{-1}(v)\,dv=x$ and by convexity of $\widetilde J$ there is a unique $q_\star\in(0,1-p)$ such that $\widetilde J(q_\star)=\widetilde y$. Moreover, the left-hand derivative of $\widetilde J$ at $q_\star$ is negative so that \eqref{compquant} still holds. 
	
	Let $\nu$ and $\widetilde \nu$ be the respective images of the Lebesgue measure on $[0,1]$ by 
	\begin{align}\label{defLemmaNu}\begin{split}
	&u\mapsto\1_{\{u<\frac{q_\star}{p}\}}F_{\widetildelow \mu}^{-1}\left(\frac{1-pu-p}{1-p}\right)+\1_{\{u\ge\frac{q_\star}{p}\}}F_{\mu}^{-1}(u)\\
	\text{and}\quad&u\mapsto\1_{\{u<\frac{q_\star}{1-p}\}}F_{\mu}^{-1}\left(\frac{(1-p)u}{p}\right)+\1_{\{u\ge \frac{q_\star}{1-p}\}}F_{\widetildelow \mu}^{-1}\left(u-\frac{q_\star}{1-p}\right).
	\end{split}
	\end{align}
	
	With the definition of $J$ and $\widetilde J$, we easily check that $\int_\R z\,\nu(dz)=J(q_\star)$ and $\int_\R z\,\widetilde \nu(dz)=\widetilde J(q_\star)$. Moreover, the inequality \eqref{compquant} implies that for each $u\in (0,\frac{q_\star}{p})$,
	\[
	F_{\widetildelow \mu}^{-1}\left(\frac{1-pu-p}{1-p}\right)\ge     F_{\widetildelow \mu}^{-1}\left(\frac{1-q_\star-p}{1-p}+\right)  >   F_{\mu}^{-1}\left(\frac{q_\star}{p}\right)\ge F_\mu^{-1}(u),
	\]
	so that $\nu$ dominates for the stochastic order the image $\mu$ of the Lebesgue measure on $(0,1)$ by $F_\mu^{-1}$. In a symmetric way, $\widetilde\nu\le_{st}\widetilde \mu$.

	For $h:\R\to\R$ measurable and bounded, we have using the changes of variables $v=\frac{1-pu-p}{1-p}$, $v=\frac{(1-p)u}{p}$ and $v=u-\frac{q_\star}{1-p}$ then the inverse transform sampling for the last equality
	\begin{align*}
	&p\int_\R h(z)\,\nu(dz)+(1-p)\int_\R h(z)\,\widetilde\nu(dz)\\
	&=p\left(\int_0^{\frac{q_\star}{p}}h\left(F_{\widetildelow \mu}^{-1}\left(\frac{1-pu-p}{1-p}\right)\right)\,du+\int_{\frac{q_\star}{p}}^1h(F_{\mu}^{-1}(u))\,du\right)\\
	&\phantom{= }+(1-p)\left(\int_0^{\frac{q_\star}{1-p}}h\left(F_{\mu}^{-1}\left(\frac{(1-p)u}{p}\right)\right)\,du+\int^1_{\frac{q_\star}{1-p}}h\left(F_{\widetildelow \mu}^{-1}\left(u-\frac{q_\star}{1-p}\right)\right)\,du\right)\\
	&=(1-p)\int^1_{\frac{1-q_\star-p}{1-p}}h\left(F_{\widetildelow \mu}^{-1}(v)\right)\,dv+p\int_{\frac{q_\star}{p}}^1h(F_{\mu}^{-1}(u))\,du+p\int_0^{\frac{q_\star}{p}}h\left(F_{\mu}^{-1}\left(v\right)\right)dv\\
	&\phantom{=}+(1-p)\int_0^{\frac{1-q_\star-p}{1-p}}h\left(F_{\widetildelow \mu}^{-1}\left(v\right)\right)\,dv\\
	&=p\int_{0}^1h(F_{\mu}^{-1}(u))\,du+(1-p)\int_0^{1}h\left(F_{\widetildelow\mu}^{-1}\left(v\right)\right)\,dv\\&=p\int_\R h(z)\,\mu(dz)+(1-p)\int_\R h(z)\,\widetilde \mu(dz).
	\end{align*}
	
	Hence $p\nu+(1-p)\tilde\nu=p\mu+(1-p)\widetilde \mu$. Taking expectations then using the definition of $p$, we get
	\[
	pJ(q_\star)+(1-p)\widetilde J(q_\star)=px+(1-p)\widetilde x=py+(1-p)\widetilde y.
	\]
	
	When $p\le\frac{1}{2}$ (resp. $p>\frac 12$), $J(q_\star)=y$ (resp. $\widetilde J(q_\star)=\widetilde y$) and we deduce that $\widetilde J(q_\star)=\widetilde y$ (resp. $J(q_\star)=y$).
	
	It remains to show that $\nu$ and $\widetilde\nu$ are measurable in $(y,\widetilde y,\mu,\widetilde\mu)$. It is clear that $p$ is a measurable function of $(y,\widetilde y,\mu,\widetilde\mu)$. Moreover we always have by definition $p\in(0,1)$, so the relation $p\nu+(1-p)\widetilde\nu=p\mu+(1-p)\widetilde\mu$ implies that it suffices to show that $\nu$ is measurable in $(y,\widetilde y,\mu,\widetilde\mu)$. Any quantile function is an element of the set $\mathcal D$ of the real-valued càglàd functions on $(0,1)$. Analogously to the Skorokhod space of the real-valued càdlàg functions on $(0,1)$, we endow $\mathcal D$ with the $\sigma$-field generated by the projection maps $\alpha_u:\mathcal D\ni f\mapsto f(u)$, $u\in(0,1)$, which coincides with the $\sigma$-field $\sigma(\alpha_u,u\in T)$ for any dense subset $T\subset(0,1)$. Let $(\mu_n)_{n\in\N}\in\mathcal P_1(\R)^\N$ converge to $\mu$ for the weak convergence topology and $T$ be the complement of the at most countable set of discontinuities of $F_\mu^{-1}$. Then for all $u\in T$, $F_{\mu_n}^{-1}(u)=\alpha_u(F_{\mu_n}^{-1})$ converges to $F_\mu^{-1}(u)=\alpha_u(F_\mu^{-1})$. We deduce that $F_\mu^{-1}$ and $F_{\widetildelow\mu}^{-1}$ are respectively measurable in $\mu$ and $\widetilde\mu$. By \eqref{defLemmaNu}, $\nu$ is the image of the Lebesgue measure on $[0,1]$ by a measurable function of $p, F_\mu^{-1}, F_{\widetildelow\mu}^{-1}$ and $q_\star$, hence it remains to prove the measurability of $q_\star$ in $(y,\widetilde y,\mu,\widetilde\mu)$.
	
	If $p\le\frac12$ we saw that $J(0)=x<y<\widetilde x\le J(p)$ and $q_\star$ is the only real number in $[0,p]$ such that $J(q_\star)=y$. By concavity of $J$ we necessarily have for all $q\in[0,p]$ that $q_\star\le q$ iff $J(q)\ge y$. Similarly, if $p>\frac12$ then for all $q\in[0,1-p]$, $q_\star\le q$ iff $\widetilde J(q)\le\widetilde y$. Fix then $q\in[0,p\wedge(1-p)]$. To conclude, it suffices to show that $J(q)$ and $\widetilde J(q)$ are measurable in $(y,\widetilde y,\mu,\widetilde\mu)$. By \cite[Lemma 4.5]{JoMa20}, the Borel $\sigma$-algebras of the weak convergence topology and the $\mathcal W_1$-distance topology coincide on $\mathcal P_1(\R)$. Moreover, since $(\mathcal P_1(\R),\mathcal W_1)$ is separable \cite[Theorem 6.18]{Vi09}, we deduce from \cite[Theorem 4.44]{Ch06} that the $\sigma$-field on $\mathfrak B$ coincides with the trace of the Borel $\sigma$-algebra of $\R\times\R\times\mathcal P_1(\R)\times\mathcal P_1(\R)$ endowed with the metric $((y,\widetilde y,\mu,\widetilde\mu),(y',\widetilde y',\mu',\widetilde\mu'))\mapsto\vert y-y'\vert+\vert\widetilde y-\widetilde y'\vert+\mathcal W_1(\mu,\mu')+\mathcal W_1(\widetilde\mu,\widetilde\mu')$. Then the measurability of $J(q)$ and $\widetilde J(q)$ follows from their  continuity in $(y,\widetilde y,\mu,\widetilde\mu)$ with respect to the latter metric, which is clear in view of their definition \eqref{definitionJqJtildeq} which implies by the changes of variables $v=\frac{1-pu-p}{1-p}$ and $v=\frac{(1-p)u}{p}$ that
	\begin{align*}
	J(q)&=\frac{1-p}{p}\int_{\frac{1-q-p}{1-p}}^{1}F_{\widetildelow\mu}^{-1}(v)\,dv+\int_{\frac qp}^{1}F_\mu^{-1}(u)\,du,\quad\widetilde J(q)=\frac{p}{1-p}\int_0^{\frac qp}F_\mu^{-1}(v)\,dv+\int_0^{\frac{1-q-p}{1-p}}F_{\widetildelow\mu}^{-1}(u)\,du,
	\end{align*}
	and the easy fact that if $(a_n)_{n\in\N}\in[0,1]^\N$ converges to $a\in[0,1]$ and $(f_n)_{n\in\N}\in L^1([0,1])^\N$ converges to $f\in L^1([0,1])$ in $L^1$, then $\int_0^{a_n}f_n(u)\,du$ converges to $\int_0^af(u)\,du$ as $n\to+\infty$.
	%
\end{proof}

	\section{Martingale rearrangement couplings of the Hoeffding-Fréchet coupling}
	\label{sec:Martingale rearrangement couplingsHF}
	
	\subsection{The inverse transform martingale coupling} We come back on the inverse transform martingale coupling and the family parametrised by $\mathcal Q$ introduced in \cite{JoMa18} since they will have particular significance in the remaining of the present paper. We briefly recall the construction and main properties and refer to \cite{JoMa18} for an extensive study. Let $\mu,\nu\in\mathcal P_1(\R)$ be such that $\mu\le_{cx}\nu$ and $\mu\neq\nu$. For $u\in[0,1]$ we define
	\begin{equation}\label{defPsi+Psi-2}
	\Psi_+(u)=\int_0^u(F_\mu^{-1}-F_\nu^{-1})^+(v)\,dv\quad\text{and}\quad \Psi_-(u)=\int_0^u(F_\mu^{-1}-F_\nu^{-1})^-(v)\,dv,
	\end{equation}
	with respective left continuous generalised inverses $\Psi_+^{-1}$ and $\Psi_-^{-1}$. We then define $\mathcal Q$ as the set of probability measures on $(0,1)^2$ with first marginal $\frac{1}{\Psi_+(1)}d\Psi_+$, second marginal $\frac{1}{\Psi_+(1)}d\Psi_-$ and such that $u<v$ for $Q(du,dv)$-almost every $(u,v)\in(0,1)^2$. Since $d\,\Psi_+$ and $d\,\Psi_-$ are concentrated on two disjoint Borel sets, there exists for each $Q\in\mathcal Q$ a probability kernel $(\pi^Q_u)_{u\in(0,1)}$ such that
	\begin{equation}\label{defPiQ}
	Q(du,dv)=\frac{1}{\Psi_+(1)}d\Psi_+(u)\,\pi^Q_u(dv)=\frac{1}{\Psi_+(1)}d\Psi_-(v)\,\pi^Q_v(du),
	\end{equation}
	and we exhibit a probability kernel $(\widetilde m^Q_u)_{u\in(0,1)}$ which satisfies for $du$-almost all $u\in(0,1)$ such that $F_\mu^{-1}(u)\neq F_\nu^{-1}(u)$
	\begin{equation}\label{defmQ2}
	\widetilde m^Q_u(dy)=\int_{v\in(0,1)}\left(\frac{F_\mu^{-1}(u)-F_\nu^{-1}(u)}{F_\nu^{-1}(v)-F_\nu^{-1}(u)}\delta_{F_\nu^{-1}(v)}(dy)+\frac{F_\nu^{-1}(v)-F_\mu^{-1}(u)}{F_\nu^{-1}(v)-F_\nu^{-1}(u)}\delta_{F_\nu^{-1}(u)}(dy)\right)\,\pi^Q_u(dv),
	\end{equation}
	and $\widetilde m^Q_u(dy)=\delta_{F_\nu^{-1}(u)}(dy)$ for all $u\in(0,1)$ such that $F_\mu^{-1}(u)=F_\nu^{-1}(u)$. Then the measure
	\begin{equation}\label{defliftedMQ}
	\widehat M^Q(du,dx,dy)=\lambda_{(0,1)}(du)\,\delta_{F_\mu^{-1}(u)}(dx)\,\widetilde m^Q_u(dy)
	\end{equation}
	is a lifted martingale coupling between $\mu$ and $\nu$. Moreover it was shown by \cite[Proposition 2.18]{JoMa18} and its proof that for $du$-almost all $u\in(0,1)$,
	\begin{equation}\label{mtildeQsigneConstant2}
	\int_\R\vert y- F_\nu^{-1}(u)\vert\,\widetilde m^Q_u(dy)=\vert F_\mu^{-1}(u)-F_\nu^{-1}(u)\vert,
	\end{equation}
	from which we deduce that the measure	
	\begin{equation}\label{defMQ2}
	M^Q(dx,dy)=\int_0^1\delta_{F_\mu^{-1}(u)}(dx)\,\widetilde m^Q_u(dy)\,du
	\end{equation}
	is a martingale coupling between $\mu$ and $\nu$ which satisfies $\int_{\R\times\R}\vert y-x\vert\,M^Q(dx,dy)\le2\mathcal W_1(\mu,\nu)$. 
Let also
	\begin{align}\label{defU-U+U0}
	&\mathcal U^+=\{u\in(0,1)\mid F_\mu^{-1}(u)>F_\nu^{-1}(u)\},\quad\mathcal U^-=\{u\in(0,1)\mid F_\mu^{-1}(u)<F_\nu^{-1}(u)\},\\
	\text{and}\quad&\mathcal U^0=\{u\in(0,1)\mid F_\mu^{-1}(u)=F_\nu^{-1}(u)\}.
	\end{align}
	
	Thanks to the equality $\Psi_+(1)=\Psi_-(1)$, consequence of the equality of the respective means of $\mu$ and $\nu$, we can set for all $u\in[0,1]$
	\begin{align}\label{defVarphi}\begin{split}
	\varphi(u)=\left\{\begin{array}{rcl}
	\Psi_-^{-1}(\Psi_+(u))&\text{if}&u\in\mathcal U^+;\\
	\Psi_+^{-1}(\Psi_-(u))&\text{if}&u\in\mathcal U^-;\\
	u&\text{if}&u\in\mathcal U^0.
	\end{array}
	\right.
	\end{split}
	\end{align}
	
	Then the measure $Q^{IT}(du,dv)=\frac{1}{\Psi_+(1)}d\Psi_+(u)\1_{\{0<\varphi(u)<1\}}\,\delta_{\varphi(u)}(dv)$ belongs to $\mathcal Q$. The martingale coupling $M^{IT}=M^{Q^{IT}}$ is the so called inverse transform martingale coupling, associated to the probability kernel $\widetilde m^{IT}=\widetilde m^{Q^{IT}}$ which satisfies for $du$-almost all $u\in(0,1)$
	\begin{equation}\label{mtildeIT}
	\widetilde m^{IT}(u,dy)=p(u)\,\delta_{F_\nu^{-1}(\varphi(u))}(dy)+\left(1-p(u)\right)\,\delta_{F_\nu^{-1}(u)}(dy),
	\end{equation}
 where 	$p(u)=\1_{\{F_\mu^{-1}(u)\neq F_\nu^{-1}(u)\}}\frac{F_\mu^{-1}(u)-F_\nu^{-1}(u)}{F_\nu^{-1}(\varphi(u))-F_\nu^{-1}(u)}$. 
	\subsection{The Hoeffding-Fréchet coupling} Let $\mu$ and $\nu$ be two probability measures on the real line with finite first moment. We recall that the Hoeffding-Fréchet coupling between $\mu$ and $\nu$, denoted $\pi^{HF}$, is by definition the comonotonic coupling between $\mu$ and $\nu$, that is the image of the Lebesgue measure on $(0,1)$ by the map $u\mapsto(F_\mu^{-1}(u),F_\nu^{-1}(u))$. Equivalently, we can write
	\[
	\pi^{HF}(dx,dy)=\int_{(0,1)}\delta_{(F_\mu^{-1}(u),F_\nu^{-1}(u))}(dx,dy)\,du.
	\]
	
	This coupling is of paramount importance in the classical optimal transport theory in dimension $1$ since it attains the infimum in the minimisation problem
	\[
	\inf_{P\in\Pi(\mu,\nu)}\int_{\R\times\R}c(x,y)\,P(dx,dy)
	\]
	as soon as $c$ satisfies the so called Monge condition, see \cite[Theorem 3.1.2]{RaRuI}. The latter condition being satisfied for any function $(x,y)\mapsto h(\vert y-x\vert)$ where $h:\R\to\R$ is convex, we deduce that $\pi^{HF}$ is optimal for $\mathcal W_\rho(\mu,\nu)$ for all $\rho\ge1$. By strict convexity, it is even the only coupling optimal for $\mathcal W_\rho(\mu,\nu)$ for $\rho>1$. Reasoning like in \eqref{kernelPi}, we get that 
%
for $\mu(dx)$-almost all $x\in\R$,
	\begin{equation}\label{HFkernel}
	\pi^{HF}_x(dy)=\int_{(0,1)}\delta_{F_\nu^{-1}(\theta(x,v))}(dy)\,dv.
	\end{equation}
	
	By \eqref{HFkernel} and monotonicity and left continuity of $F_\nu^{-1}$ we recover the well known fact that $\pi^{HF}$ is given by a measurable map, i.e. is the image of $\mu$ by $x\mapsto(x,T(x))$ where $T:\R\to\R$ is measurable, iff for all $x\in\R$ such that $\mu(\{x\})>0$, $F_\nu^{-1}$ is constant on $(F_\mu(x-),F_\mu(x)]$. In that case, we have $T=F_\nu^{-1}\circ F_\mu$, referred to as the Monge transport map.
	
	\subsection{Martingale rearrangement couplings} 
	
	Our family $(M^Q)_{Q\in\mathcal Q}$ of martingale couplings mentioned above was meant to contain the closest martingale couplings from the Hoeffding-Fréchet coupling, the latter being well known for minimising the Wasserstein distance. Thanks to Wiesel's definition of martingale rearrangement couplings we can now rephrase the latter sentence in a more formal way. 
	Let $\pi^{HF}$ be the Hoeffding-Fréchet coupling between $\mu$ and $\nu$. We will consider the following lifted coupling of $\pi^{HF}$:
	\begin{equation}\label{liftedPHF}
	\widehat \pi^{HF}(du,dx,dy)=\lambda_{(0,1)}(du)\,\delta_{F_\mu^{-1}(u)}(dx)\,\delta_{F_\nu^{-1}(u)}(dy).
	\end{equation}
	
	Recall the embedding $\iota$ defined by \eqref{embeddingLifted} and the definition of the map $\theta$ given by \eqref{deftheta}. Then
	\[
	\iota(\pi^{HF})(du,dx,dy)=\lambda_{(0,1)}(du)\,\delta_{F_\mu^{-1}(u)}(dx)\,\int_0^1\delta_{F_\nu^{-1}(\theta(F_\mu^{-1}(u),v))}(dy)\,dv,
	\]
	which is different from $\widehat \pi^{HF}$ when $F_\nu^{-1}$ is not constant on the jumps of $F_\mu$. We can actually see that $\widehat \pi^{HF}=\iota'(\pi^{HF})$, where $\iota'$ is another embedding $\Pi(\mu,\nu)$ to $\widehat\Pi(\mu,\nu)$, such that for all $\pi\in\Pi(\mu,\nu)$, $\iota'(\pi)$ is defined by
	\[
	\lambda_{(0,1)}(du)\,\delta_{F_\mu^{-1}(u)}(dx)\,\left(\1_{\{\mu(\{F_\mu^{-1}(u)\})>0\}}\delta_{\left(F_{\pi_{F_\mu^{-1}(u)}}\right)^{-1}\left(\frac{u-F_\mu(F_\mu^{-1}(u)-)}{\mu(\{F_\mu^{-1}(u)\})}\right)}(dy)+\1_{\{\mu(\{F_\mu^{-1}(u)\})=0\}}\pi_{F_\mu^{-1}(u)}(dy)\right).
	\]
	
Although $\widehat \pi^{HF}$ is a very natural lifted coupling of $\pi^{HF}$, the embedding $\iota$ used in Section \ref{subsec:liftedMarCouplings} appears to be in general simpler than $\iota'$.

	\begin{prop2}\label{HFmartrearrangementlifted} Let $\mu,\nu\in\mathcal P_1(\R)$ be such that $\mu\le_{cx}\nu$. Then for all $Q\in\mathcal Q$, the lifted martingale coupling $\widehat M^Q$ defined by \eqref{defliftedMQ} is a lifted martingale rearrangement coupling of the lifted Hoeffding-Fréchet coupling $\widehat \pi^{HF}$ defined by \eqref{liftedPHF}:
		\[
		\forall Q\in\mathcal Q,\quad\widehat{\mathcal{AW}}_1(\widehat \pi^{HF},\widehat M^Q)=\inf_{\widehat M\in\widehat\Pi^{\mathrm M}(\mu,\nu)}\widehat{\mathcal{AW}}_1(\widehat \pi^{HF},\widehat M).
		\]
	\end{prop2}
\begin{proof} Let $Q\in\mathcal Q$. The fact that $\widehat M^Q\in\widehat\Pi^{\mathrm M}(\mu,\nu)$ is clear. 
By \eqref{mtildeQsigneConstant2} we have
	\begin{align*}
	\widehat{\mathcal{AW}_1}(\widehat \pi^{HF},\widehat M^Q)&\le\int_{(0,1)}\mathcal W_1(\delta_{F_\nu^{-1}(u)},\widetilde m^Q_u)\,du=\int_{(0,1)}\int_\R\vert y-F_\nu^{-1}(u)\vert\,\widetilde m^Q_u(dy)du=\int_{(0,1)}\vert F_\mu^{-1}(u)-F_\nu^{-1}(u)\vert\,du,
	\end{align*}
	which proves the claim by Lemma \ref{lemEpsilon}.
      \end{proof}

We can also easily show that any lifted martingale coupling is a lifted quadratic martingale rearrangement coupling of the lifted Hoeffding-Fréchet coupling.
\begin{proposition}\label{liftedQuadraticMarReag} Let $\mu,\nu\in\mathcal P_2(\R)$ be such that $\mu\le_{cx}\nu$. Then any lifted martingale coupling between $\mu$ and $\nu$ is a $\widehat{\cal AW}_2$-minimal lifted martingale rearrangement coupling of the lifted Hoeffding-Fréchet coupling $\widehat \pi^{HF}$ defined by \eqref{liftedPHF}:
	\[
	\forall M,M'\in\widehat\Pi^{\mathrm M}(\mu,\nu),\quad\widehat{\mathcal{AW}}_2(\widehat M,\widehat \pi^{HF})=\widehat{\mathcal{AW}}_2(\widehat M',\widehat \pi^{HF}).
	\]
\end{proposition}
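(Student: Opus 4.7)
The plan is as follows. Fix $\widehat M = \lambda_{(0,1)}\times \delta_{F_\mu^{-1}(u)}\times m_u \in \widehat\Pi^{\mathrm M}(\mu,\nu)$ and an arbitrary $\chi\in\Pi(\lambda_{(0,1)},\lambda_{(0,1)})$. I would show that
$$\int_{(0,1)^2} \!\!\!\left(|u-u'|^2 + |F_\mu^{-1}(u)-F_\mu^{-1}(u')|^2 + \mathcal W_2^2(m_u,\delta_{F_\nu^{-1}(u')})\right)\chi(du,du')$$
depends on $\widehat M$ only through $\mu$ and $\nu$, and not on the particular choice of the kernel $(m_u)$. Taking the infimum over $\chi$ then yields that $\widehat{\mathcal{AW}}_2(\widehat M,\widehat \pi^{HF})$ is the same for every $\widehat M\in\widehat\Pi^{\mathrm M}(\mu,\nu)$, which is the claim.

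The decisive observation is that the transition kernel of $\widehat \pi^{HF}$ is the Dirac mass $\delta_{F_\nu^{-1}(u')}$, so the Wasserstein term admits the closed form
$$\mathcal W_2^2(m_u,\delta_{F_\nu^{-1}(u')}) = \int_\R y^2\, m_u(dy) - 2F_\nu^{-1}(u')\int_\R y\, m_u(dy) + (F_\nu^{-1}(u'))^2.$$
Invoking the martingale property $\int_\R y\, m_u(dy) = F_\mu^{-1}(u)$ for $du$-almost every $u$ converts the middle term into the marginal-only quantity $-2F_\mu^{-1}(u)F_\nu^{-1}(u')$.

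Next I integrate against $\chi$ and use the marginal constraints to absorb the remaining $m$-dependent piece. Since $\chi$ has first marginal $\lambda_{(0,1)}$ and the third marginal of $\widehat M$ is $\nu$, giving $\int_{(0,1)} m_u\,du = \nu$, we obtain
$$\int_{(0,1)^2}\!\!\int_\R y^2\, m_u(dy)\,\chi(du,du') = \int_\R y^2\,\nu(dy);$$
likewise, the second marginal of $\chi$ being $\lambda_{(0,1)}$ yields $\int_{(0,1)^2}(F_\nu^{-1}(u'))^2 \chi(du,du') = \int_\R y^2\,\nu(dy)$. Assembling the three pieces,
$$\int_{(0,1)^2}\!\!\mathcal W_2^2(m_u,\delta_{F_\nu^{-1}(u')})\chi(du,du') = 2\int_\R y^2\nu(dy) - 2\int_{(0,1)^2}\!\!F_\mu^{-1}(u)F_\nu^{-1}(u')\chi(du,du'),$$
which is patently independent of $(m_u)_{u\in(0,1)}$. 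Hence the entire integrand, once integrated against $\chi$, depends only on $\chi$, $\mu$, $\nu$, so taking $\inf_\chi$ gives a value common to every $\widehat M\in\widehat\Pi^{\mathrm M}(\mu,\nu)$.

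I do not expect a genuine obstacle here: all integrals are finite thanks to $\mu,\nu\in\mathcal P_2(\R)$ together with the bound $|F_\mu^{-1}(u)F_\nu^{-1}(u')|\le\tfrac12((F_\mu^{-1}(u))^2+(F_\nu^{-1}(u'))^2)$, and the Fubini manipulations are justified by nonnegativity of $y^2$ and integrability of the cross term. The whole mechanism rests on the single fact that the quadratic Wasserstein distance from a probability measure with prescribed mean to a Dirac mass is completely determined, up to an additive second-moment term, by that mean; the martingale constraint fixes the mean of $m_u$ to be $F_\mu^{-1}(u)$, and the $y$-marginal constraint fixes the $du$-integral of the second moment of $m_u$ to be $\int y^2\,\nu(dy)$.
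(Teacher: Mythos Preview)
Your argument is correct and in fact slightly more streamlined than the paper's. Both proofs exploit that $\mathcal W_2^2(m_u,\delta_{F_\nu^{-1}(u')})$ has a closed form once the second marginal is a Dirac; the paper writes this via the bias--variance decomposition $\mathcal W_2^2(m_u,\delta_{F_\nu^{-1}(u')})=|F_\nu^{-1}(u')-F_\mu^{-1}(u)|^2+\int|F_\mu^{-1}(u)-y|^2\,m_u(dy)$, then uses the optimality of the comonotone coupling for $\mathcal W_2(\mu,\nu)$ to squeeze the cost between a lower bound (obtained for a general $\chi$) and the upper bound given by the diagonal coupling $\chi(du,du')=\lambda_{(0,1)}(du)\,\delta_u(du')$. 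This identifies the optimal $\chi$ and the explicit value $\widehat{\mathcal{AW}}_2^2(\widehat M,\widehat\pi^{HF})=\mathcal W_2^2(\mu,\nu)+\int y^2\,\nu(dy)-\int x^2\,\mu(dx)$, information which is reused later in the paper. Your route is more direct: you observe that for \emph{every fixed} $\chi$, the $\chi$-integral of the cost already depends on $\widehat M$ only through $\mu$ and $\nu$, so the infimum is trivially constant in $\widehat M$ without ever solving the optimization in $\chi$. This is a genuine simplification, at the price of not exhibiting the minimizer or the value.
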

\begin{proof} Let $\widehat M=\lambda_{(0,1)}\times\delta_{F_\mu^{-1}(u)}\times m_u\in\widehat \Pi^{\mathrm M}(\mu,\nu)$ and $\chi\in\Pi(\lambda_{(0,1)},\lambda_{(0,1)})$ be optimal for $\widehat{\mathcal{AW}}_2(\widehat M,\widehat \pi^{HF})$, so that
\begin{align*}
\widehat{\mathcal{AW}}_2^2(\widehat M,\widehat \pi^{HF})&=\int_{(0,1)\times(0,1)}\left(\vert u-u'\vert^2+\vert F_\mu^{-1}(u)-F_\mu^{-1}(u')\vert^2+\mathcal W_2^2(m_u,\delta_{F_\nu^{-1}(u')})\right)\,\chi(du,du')\\
&\ge\int_{(0,1)\times(0,1)}\mathcal W_2^2(m_u,\delta_{F_\nu^{-1}(u')})\,\chi(du,du').
\end{align*}

By bias-variance decomposition for the first equality, the fact that the image of $\lambda_{(0,1)}$ by $u\mapsto(F_\mu^{-1}(u),F_\nu^{-1}(u))$ is optimal for $\mathcal W_2^2(\mu,\nu)$ for the inequality, and by bias-variance decomposition again for the second equality, we have that
\begin{align}\label{calculAW2lifted}\begin{split}
&\int_{(0,1)\times(0,1)}\mathcal W_2^2(m_u,\delta_{F_\nu^{-1}(u')})\,\chi(du,du')\\
&=\int_{(0,1)\times(0,1)}\left(\vert F_\nu^{-1}(u')-F_\mu^{-1}(u)\vert^2+\int_\R\vert F_\mu^{-1}(u)-y\vert^2\,m_u(dy)\right)\,\chi(du,du')\\
&\ge\int_{(0,1)}\left(\vert F_\nu^{-1}(u)-F_\mu^{-1}(u)\vert^2+\int_\R\vert F_\mu^{-1}(u)-y\vert^2\,m_u(dy)\right)\,du\\
&=\int_{(0,1)}\int_\R\vert F_\nu^{-1}(u)-y\vert^2\,m_u(dy)\,du\\
&=\int_{(0,1)}\mathcal W_2^2(m_u,\delta_{F_\nu^{-1}(u)})\,du\ge \widehat{\mathcal{AW}}_2^2(\widehat M,\widehat \pi^{HF}).
\end{split}
\end{align}

Using the fact that $\int_{(0,1)}\int_\R\vert F_\mu^{-1}(u)-y\vert^2\,m_u(dy)\,du=\int_\R\vert y\vert^2\,\nu(dy)-\int_\R\vert x\vert^2\,\mu(dx)$, we deduce that
\[
\widehat{\mathcal{AW}}_2^2(\widehat M,\widehat \pi^{HF})=\int_{(0,1)}\mathcal W_2^2(m_u,\delta_{F_\nu^{-1}(u)})\,du=\mathcal W_2^2(\mu,\nu)+\int_\R\vert y\vert^2\,\nu(dy)-\int_\R\vert x\vert^2\,\mu(dx),
\]
hence $\widehat{\mathcal{AW}}_2^2(\widehat M,\widehat \pi^{HF})$ does not depend on the choice of $M$.
\end{proof}

A similar conclusion holds for regular couplings. Just this once, we provide a proof valid in any dimension. In the following statement, $d\in\N^*$. The definitions \eqref{defSetCouplings}, \eqref{defWasserstein}, \eqref{eq:defAW} \eqref{defMartReagrho} given in $\R$ have straightfoward extensions to $\R^d$ endowed with the Euclidean norm $\vert\cdot\vert$.
\begin{proposition}\label{QuadraticMarReag}
	Let $\mu,\nu\in\mathcal P_2(\R^d)$ be such that $\mu\le_{cx}\nu$ and $\pi\in\Pi(\mu,\nu)$ be optimal for $\mathcal W_2(\mu,\nu)$ and concentrated on the graph of a measurable map $T:\R^d\to\R^d$. Then any $M\in\Pi^M(\mu,\nu)$ is an ${\cal AW}_2$-minimal martingale rearrangement coupling of $\pi$.
\end{proposition}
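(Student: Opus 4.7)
The strategy is to mimic the lifted argument of Proposition \ref{liftedQuadraticMarReag} and show that $\mathcal{AW}_2^2(\pi,M)$ takes the same explicit value for every $M\in\Pi^{\mathrm M}(\mu,\nu)$, namely
\[
\mathcal{W}_2^2(\mu,\nu)+\int_{\R^d}\vert y\vert^2\,\nu(dy)-\int_{\R^d}\vert x\vert^2\,\mu(dx).
\]

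The first step is to fix $M\in\Pi^{\mathrm M}(\mu,\nu)$ and $\chi\in\Pi(\mu,\mu)$ and rewrite the integrand defining $\mathcal{AW}_2^2(\pi,M)$. Since $\pi$ is concentrated on the graph of $T$, we have $\pi_x=\delta_{T(x)}$ for $\mu$-almost every $x$, hence $\mathcal{W}_2^2(\pi_x,M_{x'})=\int_{\R^d}\vert T(x)-y\vert^2\,M_{x'}(dy)$. Applying the bias-variance decomposition around the mean $x'$ of $M_{x'}$ (which exists because $M$ is a martingale coupling),
\[
\mathcal{W}_2^2(\pi_x,M_{x'})=\vert T(x)-x'\vert^2+\int_{\R^d}\vert y-x'\vert^2\,M_{x'}(dy),
\]
so that, using that the second marginal of $\chi$ is $\mu$ and that $M$ is a martingale coupling to get $\int\int \vert y-x'\vert^2 M_{x'}(dy)\mu(dx')=\int\vert y\vert^2\nu(dy)-\int\vert x\vert^2\mu(dx)$,
\[
\int_{\R^d\times\R^d}\bigl(\vert x-x'\vert^2+\mathcal{W}_2^2(\pi_x,M_{x'})\bigr)\,\chi(dx,dx')=\int_{\R^d\times\R^d}\bigl(\vert x-x'\vert^2+\vert T(x)-x'\vert^2\bigr)\,\chi(dx,dx')+\int\vert y\vert^2\nu(dy)-\int\vert x\vert^2\mu(dx).
\]

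The second step is the lower bound on the residual quadratic term. Both marginals of $\chi$ equal $\mu$, and since $\pi$ is concentrated on the graph of $T$ with second marginal $\nu$ one has $T_\sharp\mu=\nu$. Hence $(T(X),X')$ under $\chi$ is a coupling of $\nu$ and $\mu$, so $\int \vert T(x)-x'\vert^2\chi(dx,dx')\ge\mathcal{W}_2^2(\mu,\nu)$, while trivially $\int \vert x-x'\vert^2\chi(dx,dx')\ge 0$. This yields
\[
\mathcal{AW}_2^2(\pi,M)\ge \mathcal{W}_2^2(\mu,\nu)+\int\vert y\vert^2\nu(dy)-\int\vert x\vert^2\mu(dx).
\]

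The third step is the matching upper bound, obtained by choosing the diagonal coupling $\chi=(\mathrm{id},\mathrm{id})_\sharp\mu\in\Pi(\mu,\mu)$. Then $\int\vert x-x'\vert^2\chi(dx,dx')=0$, while $\int\vert T(x)-x'\vert^2\chi(dx,dx')=\int\vert T(x)-x\vert^2\mu(dx)=\int\vert x-y\vert^2\pi(dx,dy)=\mathcal{W}_2^2(\mu,\nu)$ by optimality of $\pi$. Combining the two bounds gives the claimed $M$-independent formula for $\mathcal{AW}_2^2(\pi,M)$, from which the proposition follows.

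I do not foresee a real obstacle: the only point requiring care is the lower bound on $\int\vert T(x)-x'\vert^2\chi(dx,dx')$, which rests on the observation that the second marginal of $\chi$ is still $\mu$ and that $T$ pushes $\mu$ forward to $\nu$, so that $(T(X),X')$ automatically realizes a coupling of $\nu$ with $\mu$. Note that, in contrast to the lifted case, the embedding of $\pi$ into $\widehat\Pi(\mu,\nu)$ is irrelevant here because $\pi_x$ is already a Dirac, so the $\mathcal{W}_2$-term between disintegrations does not require any extra conditioning.
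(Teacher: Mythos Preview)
Your proof is correct and follows essentially the same approach as the paper: bias--variance decomposition of $\mathcal W_2^2(\delta_{T(x)},M_{x'})$ around the barycentre $x'$ of $M_{x'}$, the observation that $((x,x')\mapsto(T(x),x'))_\sharp\chi\in\Pi(\nu,\mu)$ yields the lower bound $\mathcal W_2^2(\mu,\nu)$, and the diagonal choice of $\chi$ for the matching upper bound. The paper organises the same ingredients as a single chain of inequalities (starting from an optimal $\chi$ and ending with the diagonal bound $\int_{\R^d}\mathcal W_2^2(M_x,\delta_{T(x)})\,\mu(dx)\ge\mathcal{AW}_2^2(M,\pi)$) rather than separating the upper and lower bounds, and swaps the roles of $x$ and $x'$, but these are purely cosmetic differences.
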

\begin{proof}
	Let $M\in\Pi^{\mathrm M}(\mu,\nu)$ and $\chi\in\Pi(\mu,\mu)$ be optimal for $\mathcal{AW}_2(M,\pi)$, so that
	\[
	\mathcal{AW}_2^2(M,\pi)=\int_{\R^d\times\R^d}\left(\vert x-x'\vert^2+\mathcal W_2^2(M_x,\delta_{T(x')})\right)\,\chi(dx,dx')\ge\int_{\R^d\times\R^d}\mathcal W_2^2(M_x,\delta_{T(x')})\,\chi(dx,dx').
	\]
	
	By bias-variance decomposition for the first equality and the fact that the image of $\chi$ by $(x,x')\mapsto(x,T(x'))$ is a coupling between $\mu$ and $\nu$ 
	for the first inequality, and by bias-variance decomposition again for the second equality, we have that
	\begin{align}\label{calculAW2}\begin{split}
	\int_{\R^d\times\R^d}\mathcal W_2^2(M_x,\delta_{T(x')})\,\chi(dx,dx')&=\int_{\R^d\times\R^d}\left(\vert T(x')-x\vert^2+\int_{\R^d}\vert x-y\vert^2\,M_x(dy)\right)\,\chi(dx,dx')\\
	&\ge\mathcal W_2^2(\mu,\nu)+\int_{\R^d\times\R^d}\vert x-y\vert^2\,M(dx,dy)\\
	&=\int_{\R^d}\left(\vert x-T(x)\vert^2+\int_{\R^d}\vert x-y\vert^2\,M_x(dy)\right)\,\mu(dx)\\
	&=\int_{\R^d\times\R^d}\vert y-T(x)\vert^2\,M(dx,dy)\\
	&=\int_{\R^d}\mathcal W_2^2(M_x,\delta_{T(x)})\,\mu(dx)\ge\mathcal{AW}_2^2(M,\pi).
	\end{split}
	\end{align}
	
	Using the fact that $\int_{\R^d\times\R^d}\vert x-y\vert^2\,M(dx,dy)=\int_{\R^d}\vert y\vert^2\,\nu(dy)-\int_{\R^d}\vert x\vert^2\,\mu(dx)$, we deduce that
	\[
	\mathcal{AW}_2^2(M,\pi)=\int_{\R^d}\mathcal W_2^2(M_x,\delta_{T(x)})\,\mu(dx)=\mathcal{W}_2^2(\mu,\nu)+\int_{\R^d}\vert y\vert^2\,\nu(dy)-\int_{\R^d}\vert x\vert^2\,\mu(dx),
	\]
	hence any martingale coupling $M\in\Pi^{\mathrm M}(\mu,\nu)$ is an $\mathcal{AW}_2$-minimal martingale rearrrangement coupling of $\pi$.
\end{proof}

The use of Lemma \ref{CNSmartreagordresto} allows us to easily prove that the analogue of Proposition \ref{HFmartrearrangementlifted} holds for regular couplings as soon as on each interval $(F_\mu(x-),F_\mu(x)]$, where $x\in\R$, the sign of $u\mapsto F_\mu^{-1}(u)-F_\nu^{-1}(u)$ is constant. Of course this includes the case where $F_\nu^{-1}$ is constant on the intervals of the form $(F_\mu(x-),F_\mu(x)]$ for $x\in\R$, or equivalently the Hoeffding-Fréchet coupling $\pi^{HF}$ between $\mu$ and $\nu$ is concentrated on the graph of the Monge transport map $T=F_\nu^{-1}\circ F_\mu$. In the latter case, the conclusion of Proposition \ref{HFmartrearrangementOneSign} below can also be seen as an immediate consequence of Proposition \ref{lienmartreagliftednonlifted} and the proof of Proposition \ref{HFmartrearrangementlifted}. 
\begin{proposition}\label{HFmartrearrangementOneSign} Let $\mu,\nu\in\mathcal P_1(\R)$ be such that $\mu\le_{cx}\nu$ and on each interval $(F_\mu(x-),F_\mu(x)]$, where $x\in\R$, the sign of $u\mapsto F_\mu^{-1}(u)-F_\nu^{-1}(u)$ is constant. Then for all $Q\in\mathcal Q$, the martingale coupling $M^Q$ defined by \eqref{defMQ2} is a martingale rearrangement coupling of the Hoeffding-Fréchet coupling $\pi^{HF}$:
	\[
	\forall Q\in\mathcal Q,\quad\mathcal{AW}_1(\pi^{HF},M^Q)=\inf_{M\in\Pi^{\mathrm M}(\mu,\nu)}\mathcal{AW}_1(\pi^{HF},M).
	\]
\end{proposition}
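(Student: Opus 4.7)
My plan is to invoke Lemma \ref{CNSmartreagordresto} with the diagonal coupling $\chi(dx,dx')=\mu(dx)\,\delta_x(dx')\in\Pi(\mu,\mu)$. Since $\pi^{HF}$ satisfies the barycentre dispersion assumption \eqref{dispersionAssumption2} by \cite[Lemma 2.3]{Wi20}, it suffices to verify that for $\mu$-almost every $x$, the kernels $\pi^{HF}_x$ and $M^Q_x$ are comparable in the stochastic order (the third case in \eqref{CNSmartreagordrestoeq}); the cases $x\neq x'$ in \eqref{CNSmartreagordrestoeq} are then vacuous.

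To reduce the comparison of $\pi^{HF}_x$ and $M^Q_x$ to a pointwise comparison at the lifted level, I would first compute, by exactly the argument used in \eqref{kernelPi} (using \eqref{copule 2 IT} and $F_\mu^{-1}(\theta(x',v))=x'$), that $\mu(dx)$-a.e.
\[
\pi^{HF}_x(dy)=\int_0^1\delta_{F_\nu^{-1}(\theta(x,v))}(dy)\,dv\quad\text{and}\quad M^Q_x(dy)=\int_0^1\widetilde m^Q_{\theta(x,v)}(dy)\,dv.
\]
Next, I would observe that \eqref{mtildeQsigneConstant2}, combined with the fact that $\widetilde m^Q_u$ has mean $F_\mu^{-1}(u)$, forces $\int(y-F_\nu^{-1}(u))^\pm\widetilde m^Q_u(dy)=0$ whenever $\pm(F_\mu^{-1}(u)-F_\nu^{-1}(u))\le 0$, so that
\[
u\in\mathcal U^+\cup\mathcal U^0\Rightarrow\widetilde m^Q_u\ge_{st}\delta_{F_\nu^{-1}(u)},\qquad u\in\mathcal U^-\cup\mathcal U^0\Rightarrow\widetilde m^Q_u\le_{st}\delta_{F_\nu^{-1}(u)}.
\]

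The hypothesis on the sign of $u\mapsto F_\mu^{-1}(u)-F_\nu^{-1}(u)$ on each interval $(F_\mu(x-),F_\mu(x)]$ is precisely what I need at this point: as $v$ ranges over $(0,1)$, $\theta(x,v)$ ranges over an interval contained in $(F_\mu(x-),F_\mu(x)]$ (reducing to the single value $F_\mu(x)$ when $\mu(\{x\})=0$), so the direction of the stochastic comparison between $\widetilde m^Q_{\theta(x,v)}$ and $\delta_{F_\nu^{-1}(\theta(x,v))}$ does not depend on $v$. Since the stochastic order is preserved under mixtures (if $\eta_v^1\le_{st}\eta_v^2$ for every $v$, then $\int\eta_v^1\,dv\le_{st}\int\eta_v^2\,dv$, and analogously for $\ge_{st}$), the integrated relation transfers to $\pi^{HF}_x$ and $M^Q_x$: $\mu(dx)$-a.e.\ either $\pi^{HF}_x\le_{st} M^Q_x$ or $\pi^{HF}_x\ge_{st} M^Q_x$. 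Applying Lemma \ref{CNSmartreagordresto} with the diagonal $\chi$ then yields that $M^Q$ is a martingale rearrangement coupling of $\pi^{HF}$.

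The main subtlety, and really the only place where the hypothesis of the proposition enters, is the alignment of the direction of stochastic order across $v\in(0,1)$ in step three: without the constancy of the sign of $F_\mu^{-1}-F_\nu^{-1}$ on each plateau $(F_\mu(x-),F_\mu(x)]$, one would mix measures that are $\le_{st}\delta_{F_\nu^{-1}(\theta(x,v))}$ for some $v$ and $\ge_{st}\delta_{F_\nu^{-1}(\theta(x,v))}$ for other $v$, and no stochastic comparison between $\pi^{HF}_x$ and $M^Q_x$ would survive in general. Everything else is bookkeeping carried out by the formulas \eqref{kernelPi}, \eqref{copule 2 IT} and \eqref{mtildeQsigneConstant2} already established.
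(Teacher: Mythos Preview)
Your proposal is correct and follows essentially the same route as the paper: apply Lemma~\ref{CNSmartreagordresto} with the diagonal $\chi=(x\mapsto(x,x))_\sharp\mu$, express $\pi^{HF}_x$ and $M^Q_x$ as $v$-mixtures via~\eqref{copule 2 IT}, establish the pointwise stochastic comparison between $\widetilde m^Q_u$ and $\delta_{F_\nu^{-1}(u)}$, and use the sign hypothesis on the plateaus of $F_\mu$ to align the direction before mixing. The only cosmetic difference is that the paper cites \cite[Lemma~2.5]{JoMa18} for the pointwise comparison, whereas you derive it from~\eqref{mtildeQsigneConstant2} together with the mean of $\widetilde m^Q_u$; both arguments are equivalent and yield the same $du$-a.e.\ conclusion.
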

\begin{proof} By Lemma \ref{CNSmartreagordresto} applied with $\chi=(x\mapsto(x,x))_\sharp\mu$, it suffices to show that $\mu(dx)$-almost everywhere,
	\begin{equation}\label{CSmartReag}
	\pi^{HF}_x\le_{st}M^Q_x\quad\textrm{or}\quad\pi^{HF}_x\ge_{st}M^Q_x.
	\end{equation}
	
	Reasoning like in \eqref{kernelPi} we get that $\mu(dx)$-almost everywhere,
	\[
	\pi^{HF}_x(dy)=\int_{(0,1)}\delta_{F_\nu^{-1}(\theta(x,v))}(dy)\,dv\quad\textrm{and}\quad M^Q_x(dy)=\int_{(0,1)}\widetilde m^Q_{\theta(x,v)}(dy)\,dv.
	\]
	
	We deduce from \cite[Lemma 2.5]{JoMa18} that for $du$-almost all $u\in(0,1)$ such that $F_\mu^{-1}(u)\ge F_\nu^{-1}(u)$ (resp $F_\mu^{-1}(u)\le F_\nu^{-1}(u)$), $\delta_{F_\nu^{-1}(u)}\le_{st}\widetilde m^Q_u$ (resp. $\delta_{F_\nu^{-1}(u)}\ge_{st}\widetilde m^Q_u$). This implies that for $du$-almost all $u\in(0,1)$ such that $\mu(\{F_\mu^{-1}(u)\})=0$, $\pi^{HF}_{F_\mu^{-1}(u)}=\delta_{F_\nu^{-1}(u)}$ and $M^Q_{F_\mu^{-1}(u)}=\widetilde m^Q_u$ are comparable under the stochastic order. Moreover, the assumption made on the sign of the map $F_\mu^{-1}-F_\nu^{-1}$ on the jumps of $F_\mu$ implies that for $du$-almost all $u\in(0,1)$ such that $\mu(\{F_\mu^{-1}(u)\})>0$, we have either $(\theta(F_\mu^{-1}(u),0),\theta(F_\mu^{-1}(u),1)]\subset\{F_\mu^{-1}\ge F_\nu^{-1}\}$ so that, using the characterization of the stochastic order in terms of the cumulative disribution functions,
        $$\pi^{HF}_{F^{-1}_\mu(u)}(dy)=\int_{(0,1)}\delta_{F_\nu^{-1}(\theta(F^{-1}_\mu(u),v))}(dy)\,dv\le_{st}\int_{(0,1)}\widetilde m^Q_{\theta(F^{-1}_\mu(u),v)}(dy)\,dv=
        M^Q_{F^{-1}_\mu(u)}(dy),$$
        or $(\theta(F_\mu^{-1}(u),0),\theta(F_\mu^{-1}(u),1)]\subset\{F_\mu^{-1}\le F_\nu^{-1}\}$ so that $\pi^{HF}_{F^{-1}_\mu(u)}\ge_{st}M^Q_{F^{-1}_\mu(u)}$. By the inverse transform sampling, this shows \eqref{CSmartReag} and completes the proof.
\end{proof}

	
	In the next example where the above constant sign condition fails, the inverse transform martingale coupling between $\mu$ and $\nu$ is not a martingale rearrangement coupling of $\pi^{HF}$. Therefore, in general, we cannot say that every element of our family $(M^Q)_{Q\in\mathcal Q}$ is a martingale rearrangement coupling of the Hoeffding-Fréchet coupling. However, we show in the next proposition that we can always find a specific parameter $Q\in\mathcal Q$ such that the martingale coupling $M^Q$ is a martingale rearrangement coupling of $\pi^{HF}$.
	\begin{example}\label{egMITdiffMQ} Let $\mu=\frac14(\delta_{-1}+2\delta_0+\delta_1)$ and $\nu=\frac14(\delta_{-2}+\delta_{-1}+\delta_1+\delta_2)$. 
                The Hoeffding-Fréchet coupling $\pi^{HF}$ between $\mu$ and $\nu$ is given by
		\[
		\pi^{HF}=\frac14\left(\delta_{(-1,-2)}+\delta_{(0,-1)}+\delta_{(0,1)}+\delta_{(1,2)}\right).
		\]

		To see that the inverse transform martingale coupling
		\[
		M^{IT}=\frac16\delta_{(-1,-2)}+\frac{1}{12}\delta_{(-1,1)}+\frac{1}{12}\delta_{(0,-2)}+\frac16\delta_{(0,-1)}+\frac16\delta_{(0,1)}+\frac{1}{12}\delta_{(0,2)}+\frac{1}{12}\delta_{(1,-1)}+\frac16\delta_{(1,2)}
		\]
                is not a martingale rearrangement coupling of $\pi^{HF}$, we rely on the equivalent condition provided by Lemma \ref{CNSmartreagordresto}. One can readily compute $M^{IT}_0=\frac16\delta_{-2}+\frac13\delta_{-1}+\frac13\delta_1+\frac16\delta_2$, $\pi^{HF}_{-1}=\delta_{-2}$, $\pi^{HF}_0=\frac12(\delta_{-1}+\delta_1)$ and $\pi^{HF}_1=\delta_2$. Then $-1<0$, $1>0$ and $0=0$, but we have neither $\pi^{HF}_{-1}\ge_{st}M^{IT}_0$, $\pi^{HF}_1\le_{st}M^{IT}_0$, $\pi^{HF}_0\le_{st}M^{IT}_0$ nor $\pi^{HF}_0\ge_{st}M^{IT}_0$. We deduce by Lemma \ref{CNSmartreagordresto} that $M^{IT}$ is not a martingale rearrangement coupling of $\pi^{HF}$.

               Note that the martingale rearrangement constructed in Section \ref{secmartreang} is 
       \[\frac{3}{16}\delta_{(-1,-2)}+\frac{1}{16}\delta_{(-1,2)}+\frac{1}{4}\delta_{(0,-1)}+\frac{1}{4}\delta_{(0,1)}+\frac{1}{16}\delta_{(1,-2)}+\frac{3}{16}\delta_{(1,2)}.
		\]       \end{example}

	\begin{prop2}\label{HFmartrearrangement} Let $\mu,\nu\in\mathcal P_1(\R)$ be such that $\mu\le_{cx}\nu$. Let $\pi^{HF}$ be the Hoeffding-Fréchet coupling between $\mu$ and $\nu$. Then there exists $Q\in\mathcal Q$ such that the martingale coupling $M^Q$ defined by \eqref{defMQ2} is a martingale rearrangement coupling of $\pi^{HF}$:
	\[
	\mathcal{AW}_1(\pi^{HF},M^Q)=\inf_{M\in\Pi^{\mathrm M}(\mu,\nu)}\mathcal{AW}_1(\pi^{HF},M).
	\]
\end{prop2}
\begin{rk}\label{rkMartingaleRearrangement} We show in the proof that as soon as on each interval $(F_\mu(x-),F_\mu(x)]$ for $x\in\R$ the sign of $u\mapsto F_\mu^{-1}(u)-F_\nu^{-1}(u)$ is constant, the martingale rearrangement coupling $M^Q$ is the inverse transform martingale coupling, in coherence with Proposition \ref{HFmartrearrangementOneSign}.
\end{rk}
\begin{proof}[Proof of Proposition \ref{HFmartrearrangement}] By \eqref{borneinfepsilon}, we have
	\begin{equation}\label{eqMartReagDistMin}
	\inf_{M\in\Pi^{\mathrm M}(\mu,\nu)}\mathcal{AW}_1(\pi^{HF},M)\ge\int_\R\left\vert x-\int_\R y\,\pi^{HF}_x(dy)\right\vert\,\mu(dx),
	\end{equation}
	hence it is sufficient to show that there exists $Q\in\mathcal Q$ such that
	\begin{equation}\label{CSmartingaleRearrangement}
	\mathcal{AW}_1(\pi^{HF},M^Q)\le\int_\R\left\vert x-\int_\R y\,\pi^{HF}_x(dy)\right\vert\,\mu(dx).
	\end{equation}
	
	If $\mu=\nu$ then the statement is straightforward, hence we suppose $\mu\neq\nu$. The proof is achieved in four steps. First we exhibit an appropriate subdivision of the intervals $(0,1)$ in order to define a measure $Q$ on $(0,1)^2$. Second we show that $Q$ belongs to $\mathcal Q$ and is therefore associated to the martingale coupling $M^Q$ between $\mu$ and $\nu$. Then we find for $\mu(dx)$-almost all $x\in\R$ a coupling $\eta_x\in\Pi(\pi^{HF}_x,M^Q_x)$, so that
	\begin{equation}\label{majorationAW1PHFMQeta}
	\mathcal{AW}_1(\pi^{HF},M^Q)\le\int_\R\mathcal W_1(\pi^{HF}_x,M^Q_x)\,\mu(dx)\le\int_\R\left(\int_\R\vert y-y'\vert\,\eta_x(dy,dy')\right)\,\mu(dx).
	\end{equation}
	
	Last, we show that for $\mu(dx)$-almost all $x\in\R$,
	\begin{equation}\label{integraleEtaCS}
	\int_{\R\times\R}\vert y-y'\vert\,\eta_x(dy,dy')=\left\vert x-\int_\R y\,\pi^{HF}_x(dy)\right\vert,
	\end{equation}
	which implies \eqref{CSmartingaleRearrangement} and completes the proof.
	
	\textbf{Step 1.} Recall the definitions of $\Psi_+$, $\Psi_-$, $\mathcal U^+$ and $\mathcal U^-$ from \eqref{defPsi+Psi-2} and \eqref{defU-U+U0}. 
	Let $A_\mu=\{x\in\R\mid\mu(\{x\})>0\}$. For all $x\in A_\mu$, let $\mathcal U_x=(F_\mu(x-),F_\mu(x)]$, $\mathcal U^+_x=\mathcal U^+\cap\mathcal U_x$, $\mathcal U^-_x=\mathcal U^-\cap\mathcal U_x$ and
	
	\begin{align*}
	a_x&=\inf\left\{a\in [F_\mu(x-),F_\mu(x)]\mid\int_{F_\mu(x-)}^a(x-F_\nu^{-1}(u))^+\,du=\left(\int_{F_\mu(x-)}^{F_\mu(x)}(x-F_\nu^{-1}(u))^+\,du\right)\right.\\
	&\phantom{=\inf\left\{a\in [F_\mu(x-),F_\mu(x)]\mid\int_{F_\mu(x-)}^a(x-F_\nu^{-1}(u))^+\,du=\right.\ }\left.\wedge\left(\int_{F_\mu(x-)}^{F_\mu(x)}(x-F_\nu^{-1}(u))^-\,du\right)\right\},\\
	b_x&=\sup\left\{b\in[F_\mu(x-),F_\mu(x)]\mid\int_b^{F_\mu(x)}(x-F_\nu^{-1}(u))^-\,du=\left(\int_{F_\mu(x-)}^{F_\mu(x)}(x-F_\nu^{-1}(u))^+\,du\right)\right.\\
	&\phantom{=\sup\left\{b\in[F_\mu(x-),F_\mu(x)]\mid\int_b^{F_\mu(x)}(x-F_\nu^{-1}(u))^-\,du=\right.\ }\left.\wedge\left(\int_{F_\mu(x-)}^{F_\mu(x)}(x-F_\nu^{-1}(u))^-\,du\right)\right\},\\
	\mathcal V^+_x&=(F_\mu(x-),a_x],\quad \mathcal V^-_x=(b_x,F_\mu(x)],\quad\widetilde{\mathcal U}^+=\mathcal U^+\backslash(\bigcup_{x\in A_\mu}\mathcal V^+_x),\quad\text{and}\quad\widetilde{\mathcal U}^-=\mathcal U^-\backslash(\bigcup_{x\in A_\mu}\mathcal V^-_x).
	\end{align*}
	
	By monotonicity of $u\mapsto x-F_\nu^{-1}(u)$ and by definition of $a_x$ and $b_x$, we have $F_\mu(x-)\le a_x\le b_x\le F_\mu(x)$,
	\begin{align}\label{equationaxbx}\begin{split}
	\int_{\mathcal V^+_x}(x-F_\nu^{-1}(u))\,du&=\int_{\mathcal V^-_x}(x-F_\nu^{-1}(u))\,du\\
	&=\left(\int_{F_\mu(x-)}^{F_\mu(x)}(x-F_\nu^{-1}(u))^+\,du\right)\wedge\left(\int_{F_\mu(x-)}^{F_\mu(x)}(x-F_\nu^{-1}(u))^-\,du\right),
	\end{split}
	\end{align}
	$\mathcal V^+_x\subset\mathcal U^+_x$ and $\mathcal V^-_x\subset\mathcal U^-_x$. Moreover, we have
	\begin{align}\label{v+=u+orv-=u-}\begin{split}
	\int_{F_\mu(x-)}^{F_\mu(x)}(x-F_\nu^{-1}(u))^+\,du\ge\int_{F_\mu(x-)}^{F_\mu(x)}(x-F_\nu^{-1}(u))^-\,du&\iff(F_\mu(x-),b_x]\cap\mathcal U^-_x=\emptyset\iff\mathcal V^-_x=\mathcal U^-_x,\\
	\int_{F_\mu(x-)}^{F_\mu(x)}(x-F_\nu^{-1}(u))^+\,du\le\int_{F_\mu(x-)}^{F_\mu(x)}(x-F_\nu^{-1}(u))^-\,du&\iff(a_x,F_\mu(x)]\cap\mathcal U^+_x=\emptyset\iff\mathcal V^+_x=\mathcal U^+_x.
	\end{split}
	\end{align}
	
	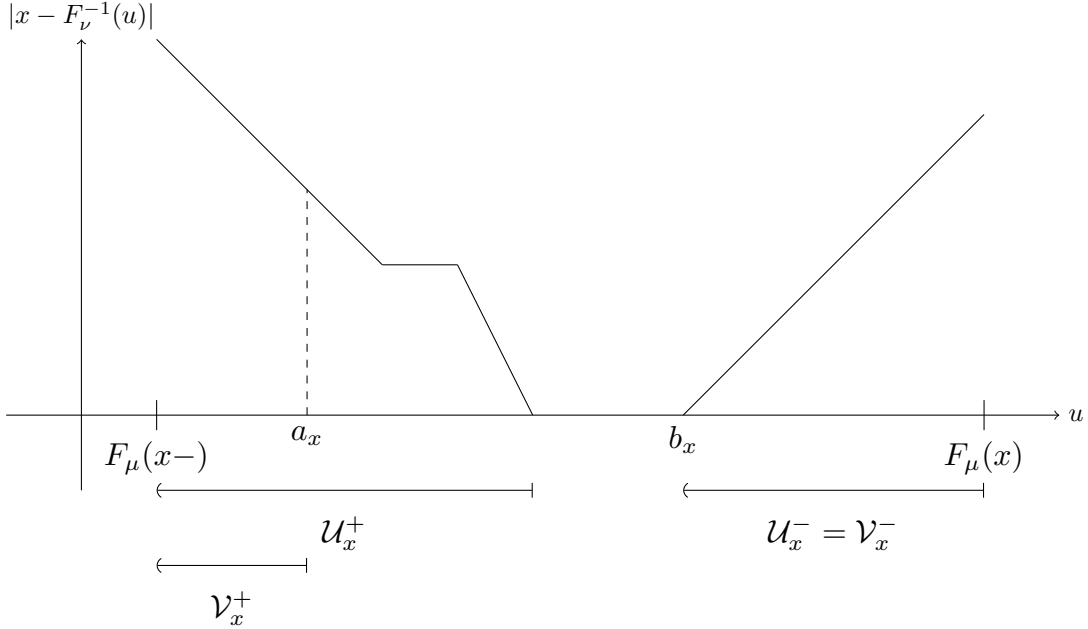
\begin{figure}[h]
		\centering
		
		\begin{tikzpicture}
		\draw[->] (-1, 0) -- (13, 0) node[right] {$u$};
		\draw[->] (0, -1) -- (0, 5) node[above] {$\vert x-F_\nu^{-1}(u)\vert$};
		\draw[domain=1:4, smooth, variable=\x] plot ({\x}, {6-\x});
		\draw[domain=4:5, smooth, variable=\x] plot ({\x}, {2});
		\draw[domain=5:6, smooth, variable=\x] plot ({\x}, {2-2*(\x-5)});
		\draw[domain=8:12, smooth, variable=\x] plot ({\x}, {(\x-8)});
		\draw (1,-0.2) node[below, scale = 1.2]{$F_\mu(x-)$};
		\draw (12,-0.2) node[below, scale = 1.2]{$F_\mu(x)$};
		\draw (3,0) node[below, scale = 1.2]{$a_x$};
		\draw (8,0) node[below, scale = 1.2]{$b_x$};
		\draw[-] (1,-0.2)--(1,0.2) ;
		\draw[-] (12,-0.2)--(12,0.2) ;
		\draw[dashed] (3,0) -- (3,3);
		\draw[(-|] (1,-1) -- (6,-1);
		\draw (3.5,-1.2) node[below, scale = 1.2]{$\mathcal U^+_x$};
		\draw[(-|] (1,-2) -- (3,-2);
		\draw (2,-2.2) node[below, scale = 1.2]{$\mathcal V^+_x$};
		\draw[(-|] (8,-1) -- (12,-1);
		\draw (10,-1.2) node[below, scale = 1.2]{$\mathcal U^-_x=\mathcal V^-_x$};
		\end{tikzpicture}
		\caption{Points and intervals involved in the proof in the case where $\int_{\mathcal U_x}(x-F_\nu^{-1}(u))^+\,du>\int_{\mathcal U_x}(x-F_\nu^{-1}(u))^-\,du$.}
		
	\end{figure}
	
	For $x\in A_\mu$, let $Q_x$ be any measure on $(0,1)^2$ such that its first and second marginal are respectively
	\begin{equation}\label{marginalsQx}
	\frac{1}{\Psi_+(1)}\1_{\mathcal V^+_x}(u)\,(x-F_\nu^{-1}(u))^+\,du\quad\text{and}\quad\frac{1}{\Psi_+(1)}\1_{\mathcal V^-_x}(v)\,(x-F_\nu^{-1}(v))^-\,dv.
	\end{equation}
	
	Notice that $a_x\le b_x$ implies
	\begin{equation}\label{defQx}
	Q_x(\{(u,v)\in(0,1)^2\mid u<v\})=Q_x((0,1)^2).
	\end{equation}
	
	Let now $\chi_+,\chi_-:[0,1]\to\R$ be defined for all $u\in[0,1]$ by 
	\begin{equation}\label{defchi+chi-}
	\chi_+(u)=\int_0^u(F_\mu^{-1}-F_\nu^{-1})^+(v)\1_{\widetilde{\mathcal U}^+}(v)\,dv\quad\text{and}\quad
	\chi_-(u)=\int_0^u(F_\mu^{-1}-F_\nu^{-1})^-(v)\1_{\widetilde{\mathcal U}^-}(v)\,dv.
	\end{equation}
	
	For all $u\in[0,1]$, $[0,u]\cap\mathcal U^+$, resp. $[0,u]\cap\mathcal U^-$, is the disjoint union of $[0,u]\cap\widetilde{\mathcal U}^+$ and $[0,u]\cap\left(\bigcup_{x\in A_\mu}\mathcal V^+_x\right)$, resp. $[0,u]\cap\widetilde{\mathcal U}^-$ and $[0,u]\cap\left(\bigcup_{x\in A_\mu}\mathcal V^-_x\right)$. Therefore, for all $u\in[0,1]$,
	\begin{align}\label{lienPsiChi}\begin{split}
	\Psi_+(u)&=\chi_+(u)+\sum_{x\in A_\mu}\int_{[0,u]\cap\mathcal V^+_x}(F_\mu^{-1}-F_\nu^{-1})^+(v)\,dv,\\
	\text{and}\quad\Psi_-(u)&=\chi_-(u)+\sum_{x\in A_\mu}\int_{[0,u]\cap\mathcal V^-_x}(F_\mu^{-1}-F_\nu^{-1})^-(v)\,dv.
	\end{split}
	\end{align}
	
	Applying \eqref{lienPsiChi} with $u=1$, \eqref{equationaxbx} and the equality $\Psi_+(1)=\Psi_-(1)$, we get $\chi_+(1)=\chi_-(1)$, hence we can define the map $\Gamma:[0,1]\to[0,1]$ for all $u\in[0,1]$ by
	\begin{align*}\begin{split}
	\Gamma(u)=\left\{\begin{array}{rl}
	\chi_-^{-1}(\chi_+(u))&\text{if}\ u\in\widetilde{\mathcal U}^+;\\
	\chi_+^{-1}(\chi_-(u))&\text{if}\ u\in\widetilde{\mathcal U}^-;\\
	u&\text{otherwise.}
	\end{array}
	\right.
	\end{split}
	\end{align*}
	
	Let then $\widetilde Q$ be the measure on $(0,1)^2$ defined by
	\begin{equation}\label{defQtilde}
	\widetilde Q(du,dv)=\frac{1}{\Psi_+(1)}\,d\chi_+(u)\,\delta_{\Gamma(u)}(dv).
	\end{equation}
	
	\textbf{Step 2.} We now show that the measure $Q$ on $(0,1)^2$ defined by
	\[
	Q=\widetilde Q+\sum_{x\in A_\mu}Q_x
	\]
	is an element of $\mathcal Q$. Notice that if for all $x\in\R$, $u\mapsto F_\mu^{-1}(u)-F_\nu^{-1}(u)$ does not change sign on $\mathcal U_x$, then $\mathcal V^+_x=\mathcal V^-_x=\emptyset$. In that case, $\widetilde{\mathcal U}^+=\mathcal U^+$ and $\widetilde{\mathcal U}^-=\mathcal U^-$, hence $Q=\widetilde Q=Q^{IT}$, which proves the statement of Remark \ref{rkMartingaleRearrangement}, provided that we complete the present proof.
	
	To prove that $Q\in\mathcal Q$ we begin to show that
	\begin{equation}\label{Qu<v}
	Q(\{(u,v)\in(0,1)^2\mid u<v\})=Q((0,1)^2).
	\end{equation}
	
	In view of \eqref{defQx} it suffices to show that
	\[
	\widetilde Q(\{(u,v)\in(0,1)^2\mid u<v\})=\widetilde Q((0,1)^2),
	\]
	which by \eqref{defQtilde} is equivalent to
	\begin{equation}\label{comparaisonGammau}
	\Gamma(u)>u,\quad d\chi_+(u)\text{-almost everywhere}.
	\end{equation}
	
	Let $u\in[0,1]$. Suppose first that $u\notin B_\mu:=\bigcup_{x\in A_\mu}(F_\mu(x-),F_\mu(x))$. Then for all $x\in A_\mu$, we have either $u\le F_\mu(x-)$ or $F_\mu(x)\le u$, and since $\mathcal V^+_x,\mathcal V^-_x\subset\mathcal U_x$, either $\mathcal V^+_x,\mathcal V^-_x\subset(u,1)$ or $\mathcal V^+_x,\mathcal V^-_x\subset[0,u]$. Equivalently,
	\[
	[0,u]\cap\mathcal V^+_x=\mathcal V^+_x\quad\text{and}\quad[0,u]\cap\mathcal V^-_x=\mathcal V^-_x,\quad\text{or}\quad[0,u]\cap\mathcal V^+_x=[0,u]\cap\mathcal V^-_x=\emptyset.
	\]
	
	If $[0,u]\cap\mathcal V^+_x=\mathcal V^+_x$ and $[0,u]\cap\mathcal V^-_x=\mathcal V^-_x$, \eqref{equationaxbx} yields
	\begin{align*}
	\int_{[0,u]\cap\mathcal V^+_x}(F_\mu^{-1}-F_\nu^{-1})^+(v)\,dv&=\int_{\mathcal V^+_x}(x-F_\nu^{-1}(v))\,dv=\int_{\mathcal V^-_x}(x-F_\nu^{-1}(v))\,dv\\
	&=\int_{[0,u]\cap\mathcal V^-_x}(F_\mu^{-1}-F_\nu^{-1})^-(v)\,dv.
	\end{align*}
	
	Else if $[0,u]\cap\mathcal V^+_x=[0,u]\cap\mathcal V^-_x=\emptyset$, then we clearly have $\int_{[0,u]\cap\mathcal V^+_x}(F_\mu^{-1}-F_\nu^{-1})^+(v)\,dv=\int_{[0,u]\cap\mathcal V^-_x}(F_\mu^{-1}-F_\nu^{-1})^-(v)\,dv$ too. We then deduce from \eqref{lienPsiChi} that $\chi_+(u)-\chi_-(u)=\Psi_+(u)-\Psi_-(u)$. By \cite[(3.8)]{JoMa18}, $\Psi_+(u)>\Psi_-(u)$ for $du$-almost every $u\in\mathcal U^+$ and therefore $u\in\widetilde{\mathcal U}^+$. We deduce that
	\begin{equation}\label{comparaisonchi-chi+1}
	\chi_+(u)\ge\chi_-(u),\quad\text{for all $u\in[0,1]\backslash B_\mu$, and the inequality is strict for $du$-almost every }u\in\widetilde{\mathcal U}^+\backslash B_\mu.
	\end{equation}

	Suppose now that $u\in\widetilde{\mathcal U}^+\cap B_\mu$. Then there exists $x\in A_\mu$ such that $u\in(F_\mu(x-),F_\mu(x))$, or equivalently $u\in(a_x,F_\mu(x))\cap\mathcal U^+$. Since $F_\mu^{-1}$ and $F_\nu^{-1}$ are left continuous, we have for $\varepsilon>0$ small enough $[u-\varepsilon,u]\subset\mathcal U^+$ and of course $[u-\varepsilon,u]\subset(a_x,F_\mu(x))$, hence $[u-\varepsilon,u]\subset\widetilde{\mathcal U}^+$ and
	\[
	\chi_+(u)>\chi_+(u-\varepsilon)\ge\chi_+(F_\mu(x-)).
	\]
	
	Using \eqref{comparaisonchi-chi+1} with $F_\mu(x-)\in[0,1]\backslash B_\mu$, we get $\chi_+(F_\mu(x-))\ge\chi_-(F_\mu(x-))$. Moreover, the existence of $u$ implies that $\mathcal V^+_x\neq\mathcal U^+_x$ and therefore $\mathcal V^-_x=\mathcal U^-_x$ by \eqref{v+=u+orv-=u-}, hence $\chi_-(u)=\chi_-(F_\mu(x-))$. We deduce that
	\begin{equation}\label{comparaisonchi-chi+2}
	\chi_+(u)>\chi_-(u),\quad\text{for all }u\in\widetilde{\mathcal U}^+\cap B_\mu.
	\end{equation}
	
	Since for all $u\in(0,1)$, $\chi_+(u)>\chi_-(u)\iff\Gamma(u)>u$, \eqref{comparaisonGammau} follows directly from \eqref{comparaisonchi-chi+1} and \eqref{comparaisonchi-chi+2}, which proves \eqref{Qu<v}.
	
	We now show that $Q$ has the right marginals. On the one hand, $\mathcal U^+=\widetilde{\mathcal U}^+\cup\left(\bigcup_{x\in A_\mu}\mathcal V^+_x\right)$ where the unions are disjoint, hence its first marginal is
	\begin{align}\label{firstMarginalQ}\begin{split}
	&\frac{1}{\Psi_+(1)}d\chi_+(u)+\sum_{x\in A_\mu}\frac{1}{\Psi_+(1)}\1_{\mathcal V^+_x}(u)(x-F_\nu^{-1}(u))^+\,du\\
	&=\frac{1}{\Psi_+(1)}\left((F_\mu^{-1}-F_\nu^{-1})^+(u)\1_{\widetilde{\mathcal U}^+}(u)\,du+\sum_{x\in A_\mu}(F_\mu^{-1}-F_\nu^{-1})^+(u)\1_{\mathcal V^+_x}(u)\,du\right)\\
	&=\frac{1}{\Psi_+(1)}(F_\mu^{-1}-F_\nu^{-1})^+(u)\1_{\mathcal U^+}(u)\,du=\frac{1}{\Psi_+(1)}d\Psi_+(u).
	\end{split}
	\end{align}
	
	On the other hand, by \cite[Lemma 6.1]{JoMa18} applied with $f_1=(F_\mu^{-1}-F_\nu^{-1})^+\1_{\widetilde{\mathcal U}^+}$, $f_2=(F_\mu^{-1}-F_\nu^{-1})^-\1_{\widetilde{\mathcal U}^-}$, $u_0=1$ and $h:u\mapsto\1_{\{u\notin\widetilde{\mathcal U}^-\}}$, we get
	\[
	\int_0^1\1_{\{\Gamma(u)\notin\widetilde{\mathcal U}^-\}}\,d\chi_+(u)=\int_0^1\1_{\{v\notin\widetilde{\mathcal U}^-\}}\,d\chi_-(v)=0,
	\]
	hence $\Gamma(u)\in\widetilde{\mathcal U}^-$ for $du$-almost all $u\in\widetilde{\mathcal U}^+$.	Reasoning like in the derivation of \eqref{PhiPhi=u} with $(\Gamma,\chi_+,\chi_-,\widetilde{\mathcal U}^+,\widetilde{\mathcal U}^-)$ replacing $(\varphi,\Psi_+,\Psi_-,\mathcal U^+,\mathcal U^-)$, we get that
	\begin{equation}\label{GammaGamma=u}
	u=\Gamma(\Gamma(u)),\quad\text{$du$-almost everywhere on $\widetilde{\mathcal U}^+$}.
	\end{equation}
	
	Let $H:(0,1)^2\to\R$ be a measurable and bounded map. Applying \cite[Lemma 6.1]{JoMa18} with $f_1=(F_\mu^{-1}-F_\nu^{-1})^+\1_{\widetilde{\mathcal U}^+}$, $f_2=(F_\mu^{-1}-F_\nu^{-1})^-\1_{\widetilde{\mathcal U}^-}$, $u_0=1$ and $h:u\mapsto H(\Gamma(u),u)$ then yields
	\begin{align*}
	\int_{(0,1)^2}H(u,v)\,\widetilde Q(du,dv)&=\frac{1}{\Psi_+(1)}\int_0^1H(u,\Gamma(u))\,d\chi_+(u)\\
	&=\frac{1}{\Psi_+(1)}\int_0^1H(\Gamma(\Gamma(u)),\Gamma(u))(F_\mu^{-1}-F_\nu^{-1})^+(u)\1_{\widetilde{\mathcal U}^+}(u)\,du\\
	&=\frac{1}{\Psi_+(1)}\int_0^1H(\Gamma(v),v)(F_\mu^{-1}-F_\nu^{-1})^-(v)\1_{\widetilde{\mathcal U}^-}(v)\,dv\\
	&=\frac{1}{\Psi_-(1)}\int_0^1H(\Gamma(v),v)\,d\chi_-(v).
	\end{align*}
	
	We deduce that
	\begin{equation}\label{defQtilde2}
	\widetilde Q(du,dv)=\frac{1}{\Psi_+(1)}d\chi_-(v)\,\delta_{\Gamma(v)}(du),
	\end{equation}
	and we show with a calculation similar to \eqref{firstMarginalQ} that its second marginal is $\frac{1}{\Psi_+(1)}d\Psi_-$. We conclude that $Q\in\mathcal Q$. 
	
	\textbf{Step 3.} For all $x\in\R$, let $(\eta_x(dy,dy'))_{x\in\R}$ be the probability kernel defined by
	\[
	\left\{
	\begin{array}{r}
	\displaystyle\delta_{x}(dy)\,\delta_{x}(dy')\\
	\displaystyle\textrm{if}\ F_\mu(x)=0\ \textrm{or}\ F_\mu(x-)=1;\\
	\\
	\displaystyle\frac{1}{\mu(\{x\})}\left(\int_{u\in\mathcal V^+_x\cup\mathcal V^-_x}\widetilde m^Q_u(dy')\,\delta_{y'}(dy)\,du+\int_{u\in(a_{x},b_{x})}\widetilde m^Q_u(dy')\,\delta_{F_\nu^{-1}(u)}(dy)\,du\right)\\
	\displaystyle\textrm{if}\ \mu(\{x\})>0;\\
	\\
	\displaystyle\delta_{F_\nu^{-1}(F_\mu(x))}(dy)\,\widetilde m^Q_{F_\mu(x)}(dy')\\
	\textrm{otherwise},
	\end{array}
	\right.
	\]
	where the probability kernel $(\widetilde m^Q_u)_{u\in(0,1)}$ is given by \eqref{defmQ2}. Notice that in view of the definition \eqref{defMQ2} of $M^Q$, one can check that for $\mu(dx)$-almost all $x\in\R$,
	\begin{equation}\label{expressionMQx}
	M^Q_x(dy)=\left\{
	\begin{array}{cr}
	\delta_x(dy)&\text{if}\ F_\mu(x)=0\ \text{or}\ F_\mu(x_-)=1;\\
	\\
	\displaystyle\frac{1}{\mu(\{x\})}\int_{u=F_\mu(x-)}^{F_\mu(x)}\widetilde m^Q_u(dy)\,du&\text{if}\ \mu(\{x\})>0;\\
	\\
	\displaystyle \widetilde m^Q_{F_\mu(x)}(dy)&\text{otherwise}.
	\end{array}
	\right.
	\end{equation}
	
	Let us show that for $\mu(dx)$-almost all $x\in\R$, $\eta_x(dy,dy')$ is a coupling between $\pi^{HF}_x$ and $M^Q_x$. Let $x\in\R$. By \eqref{eq:F-1circF 2} we may suppose without loss of generality that $F_\mu(x)>0$ and $F_\mu(x-)<1$.  Let $h:\R\to\R$ be a measurable and bounded map. Suppose first that $\mu(\{x\})=0$. Then by \eqref{HFkernel} for the second equality,
	\begin{align*}
	\int_{\R\times\R}h(y)\,\eta_x(dy,dy')&=h(F_\nu^{-1}(F_\mu(x)))=\int_\R h(y)\,\pi^{HF}_x(dy),\\
	\text{and}\quad\int_{\R\times\R}h(y')\,\eta_x(dy,dy')&=\int_\R h(y')\,\widetilde m^Q_{F_\mu(x)}(dy')=\int_\R h(y')\,M^Q_x(dy').
	\end{align*}	
	
	Suppose now that that $\mu(\{x\})>0$. On the one hand, using the fact that $\mathcal V^+_x\cup\mathcal V^-_x\cup(a_x,b_x]=(F_\mu(x-),F_\mu(x)]$ for the second equality, we have
	\begin{align*}
	\int_{\R\times\R}h(y')\,\eta_x(dy,dy')&=\frac{1}{\mu(\{x\})}\left(\int_{\mathcal V^+_x\cup\mathcal V^-_x}\int_\R h(y')\,\widetilde m^Q_u(dy')\,du+\int_{(a_{x},b_{x})}\int_\R h(y')\,\widetilde m^Q_u(dy')\,du\right)\\
	&=\frac{1}{\mu(\{x\})}\int_{(F_\mu(x-),F_\mu(x)]}\int_\R h(y')\,\widetilde m^Q_u(dy')\,du=\int_\R h(y')\,M^Q_x(dy').
	\end{align*}
	
	On the other hand,
	\begin{equation}\label{couplageeta0}
	\int_{\R\times\R}h(y)\,\eta_x(dy,dy')=\frac{1}{\mu(\{x\})}\left(\int_{\mathcal V^+_x\cup\mathcal V^-_x}\int_\R h(y')\,\widetilde m^Q_u(dy')\,du+\int_{(a_{x},b_{x})}h(F_\nu^{-1}(u))\,du\right).
	\end{equation}
	
	Assume for a moment that
	\begin{equation}\label{couplageeta}
	\int_{\mathcal V^+_x\cup\mathcal V^-_x}\int_\R h(y')\,\widetilde m^Q_u(dy')\,du=\int_{\mathcal V^+_x\cup\mathcal V^-_x}h(F_\nu^{-1}(u))\,du.
	\end{equation}
	
	We then deduce with \eqref{couplageeta0} and \eqref{HFkernel} that
	\begin{align*}
	\int_{\R\times\R}h(y)\,\eta_x(dy,dy')&=\frac{1}{\mu(\{x\})}\left(\int_{\mathcal V^+_x\cup\mathcal V^-_x}h(F_\nu^{-1}(u))\,du+\int_{(a_{x},b_{x})}h(F_\nu^{-1}(u))\,du\right)\\
	&=\frac{1}{\mu(\{x\})}\int_{(F_\mu(x-),F_\mu(x)]}h(F_\nu^{-1}(u))\,du=\int_\R h(y)\,\pi^{HF}_x(dy).
	\end{align*}
	
	This proves that we indeed have $\eta_x(dy,dy')\in\Pi(\pi^{HF}_x,M^Q_x)$ for $\mu(dx)$-almost all $x\in\R$, hence \eqref{majorationAW1PHFMQeta} holds.
	
	Let us then prove \eqref{couplageeta}. By \eqref{defPiQ} there exists a probability kernel $(\pi^Q_u)_{u\in(0,1)}$ such that 
	\[
	Q(du,dv)=\frac{1}{\Psi_+(1)}\,d\Psi_+(u)\,\pi^Q_u(dv)=\frac{1}{\Psi_+(1)}\,d\Psi_-(v)\,\pi^Q_v(du).
	\]
	
	Since the first, resp. second marginals of $Q-Q_x$ and $Q_x$ are singular, i.e. supported by disjoint measurable subsets of $(0,1)$, we have 
	\begin{equation}\label{kernelQx}
	Q_x(du,dv)=\frac{1}{\Psi_+(1)}\1_{\mathcal V^+_x}(u)\,d\Psi_+(u)\,\pi^Q_u(dv)=\frac{1}{\Psi_+(1)}\1_{\mathcal V^-_x}(v)\,d\Psi_-(v)\,\pi^Q_v(du).
	\end{equation}
	
	For $u,v\in(0,1)$ such that $F_\nu^{-1}(u)\neq F_\nu^{-1}(v)$, let $\Delta(u,v)=\frac{h(F_\nu^{-1}(v))-h(F_\nu^{-1}(u))}{F_\nu^{-1}(v)-F_\nu^{-1}(u)}$. By \cite[Lemma 2.5]{JoMa18}, for $du$-almost all $u\in(0,1)$ we have
	\begin{align}\label{couplageeta2}\begin{split}
	\int_\R h(y)\,\widetilde m^Q_u(dy)&=h(F_\nu^{-1}(u))+\int_{(0,1)}\Delta(u,v)(F_\mu^{-1}-F_\nu^{-1})^+(u)\,\pi^Q_u(dv)\\
	&\phantom{=\ }-\int_{(0,1)}\Delta(u,v)(F_\mu^{-1}-F_\nu^{-1})^-(u)\,\pi^Q_u(dv),
	\end{split}
	\end{align}
	where the integrals are well defined. Using the facts that $\mathcal V^+_x\subset\mathcal U^+$, $\mathcal V^-_x\subset\mathcal U^-$ and the symmetry of the function $\Delta$, we get
	\begin{align*}
	&\int_{\mathcal V^+_x\cup\mathcal V^-_x}\int_{(0,1)}\Delta(u,v)(F_\mu^{-1}-F_\nu^{-1})^+(u)\,\pi^Q_u(dv)\,du\\
	&=\int_{(0,1)^2}\Delta(u,v)(F_\mu^{-1}-F_\nu^{-1})^+(u)\1_{\mathcal V^+_x}(u)\,\pi^Q_u(dv)\,du\\
	&=\Psi_+(1)\int_{(0,1)^2}\Delta(u,v)\,Q_x(du,dv)\\
	&=\int_{(0,1)^2}\Delta(u,v)(F_\mu^{-1}-F_\nu^{-1})^-(v)\1_{\mathcal V^-_x}(v)\,\pi^{Q}_v(du)\,dv\\
	&=\int_{\mathcal V^+_x\cup\mathcal V^-_x}\int_{(0,1)}\Delta(u,v)(F_\mu^{-1}-F_\nu^{-1})^-(u)\,\pi^Q_u(dv)\,du.
	\end{align*}
	
	Then \eqref{couplageeta} is a direct consequence of \eqref{couplageeta2} integrated on $\mathcal V^+_x\cup\mathcal V^-_x$ with respect to the Lebesgue measure. 
	
	\textbf{Step 4.} As mentioned at the beginning of the proof, it remains only to show that for $\mu(dx)$-almost all $x\in\R$, \eqref{integraleEtaCS} is satisfied. By \eqref{mtildeQsigneConstant2} and \eqref{copule 2 IT} we have for $\mu(dx)$-almost all $x\in\R$ and $dv$-almost $v\in(0,1)$
	\[
	\int_\R\vert y'-F_\nu^{-1}(F_\mu(x-)+v\mu(\{x\}))\vert\,\widetilde m^Q_{F_\mu(x-)+v\mu(\{x\})}(dy')=\vert x-F_\nu^{-1}(F_\mu(x-)+v\mu(\{x\}))\vert.
	\]
	
	The latter equality implies that for $\mu(dx)$-almost all $x\in\R$ such that $\mu(\{x\})=0$,
	\begin{align*}
	\int_{\R\times\R}\vert y-y'\vert\eta_x(dy,dy')&=\int_\R\vert y'-F_\nu^{-1}(F_\mu(x))\vert\,\widetilde m^Q_{F_\mu(x)}(dy')=\vert x-F_\nu^{-1}(F_\mu(x))\vert\\
	&=\left\vert x-\int_\R y\,\pi^{HF}_x(dy)\right\vert.
	\end{align*}
	
	It remains to show \eqref{integraleEtaCS} for $x\in\R$ such that $\mu(\{x\})>0$. For such an element $x$, we have by \eqref{v+=u+orv-=u-} either $\mathcal V^+_x=\mathcal U^+_x$, which implies $(a_x,b_x)\cap\mathcal U^+_x=\emptyset$, or $\mathcal V^-_x=\mathcal U^-_x$, which implies $(a_x,b_x)\cap\mathcal U^-_x=\emptyset$. In both cases we have that $u\mapsto F_\mu^{-1}(u)-F_\nu^{-1}(u)$ does not change sign on $(a_x,b_x)$. Added to \eqref{mtildeQsigneConstant2} and \eqref{equationaxbx}, we deduce that
	\begin{align*}
	&\int_{\R\times\R}\vert y-y'\vert\,\eta_x(dy,dy')\\
	&=\frac{1}{\mu(\{x\})}\int_{(a_x,b_x)}\left(\int_\R\vert y'-F_\nu^{-1}(u)\vert\,\widetilde m^Q_u(dy')\right)\,du\\
	&=\frac{1}{\mu(\{x\})}\int_{(a_x,b_x)}\left\vert F_\mu^{-1}(u)-F_\nu^{-1}(u)\right\vert\,du\\
	&=\frac{1}{\mu(\{x\})}\left\vert\int_{(a_x,b_x)}(F_\mu^{-1}(u)-F_\nu^{-1}(u))\,du\right\vert\\
	&=\frac{1}{\mu(\{x\})}\left\vert\int_{\mathcal V^+_x}(F_\mu^{-1}(u)-F_\nu^{-1}(u))\,du+\int_{(a_x,b_x)}(F_\mu^{-1}(u)-F_\nu^{-1}(u))\,du+\int_{\mathcal V^-_x}(F_\mu^{-1}(u)-F_\nu^{-1}(u))\,du\right\vert\\
	&=\frac{1}{\mu(\{x\})}\left\vert\int_{(F_\mu(x-),F_\mu(x)]}(x-F_\nu^{-1}(u))\,du\right\vert\\
	&=\left\vert x-\frac{1}{\mu(\{x\})}\int_{(F_\mu(x-),F_\mu(x)]}F_\nu^{-1}(u)\,du\right\vert\\
	&=\left\vert x-\int_\R y\,\pi^{HF}_x(dy)\right\vert,
	\end{align*}
	which shows \eqref{integraleEtaCS} and completes the proof.
\end{proof}
\begin{rk}\label{falseDegreeOfFreedomQx}\begin{enumerate}[(i)]

		\item Despite appearances, the martingale coupling $M^Q$ constructed in the proof of Proposition \ref{HFmartrearrangement} does not depend on the choice of the measures $Q_x$, $x\in A_\mu$, whose marginals are given by \eqref{marginalsQx}. Informally, we see that $(Q_x)_{x\in A_\mu}$ does not affect $(M^Q_x)_{x\in\R}$ outside the jumps of $F_\mu$. Moreover, for all $x\in A_\mu$, $Q_x$ describes the way the elements of $\mathcal V^+_x$ are matched with the elements of $\mathcal V^-_x$, but this level of detail is not seen by $M^Q_x$, which only retains the contribution $\frac{1}{\mu(\{x\})}\int_{\mathcal V^+_x\cup\mathcal V^-_x}\delta_{F_\nu^{-1}(u)}(dy)\,du$, as shown below.
	
	Formally, since for all $x\in A_\mu$, the first, resp. second marginals of $\widetilde Q$ and $Q_x$ are singular, i.e. supported by disjoint measurable subsets of $(0,1)$, we have $\pi^Q_{u}=\delta_{\Gamma(u)}$ for $du$-almost all $u\in\widetilde{\mathcal U}^+\cup\widetilde{\mathcal U}^-$. In view of the definition \eqref{defmQ2}, we deduce that for all $du$-almost all $u\in\widetilde{\mathcal U}^+\cup\widetilde{\mathcal U}^-$, $\widetilde m^Q_u$ does not depend on $(Q_x)_{x\in A_\mu}$, neither does it for all $u\in(0,1)$ such that $F_\mu^{-1}(u)=F_\nu^{-1}(u)$. Moreover, the image of the continuous part $\mu-\sum_{x\in A_\mu}\mu(\{x\})\delta_x$ of $\mu$ by $F_\mu^{-1}$ is absolutely continuous with respect to the Lebesgue measure on $(0,1)$. This implies that for $\mu(dx)$-almost all $x\in\R$ such that $\mu(\{x\})=0$, $M^Q_x=\widetilde m^Q_{F_\mu(x)}$ does not depend on $(Q_{x'})_{x'\in A_\mu}$.
	
	Let now $x\in A_\mu$. Since $(F_\mu(x-),F_\mu(x)]=\mathcal V^+_x\cup\mathcal V^-_x\cup(a_x,b_x]$, we have
	\begin{align*}
	M^Q_x(dy)&=\frac{1}{\mu(\{x\})}\int_{F_\mu(x-)}^{F_\mu(x)}\widetilde m^Q_u(dy)\,du\\
	&=\frac{1}{\mu(\{x\})}\left(\int_{\mathcal V^+_x}\widetilde m^Q_u(dy)\,du+\int_{\mathcal V^-_x}\widetilde m^Q_u(dy)\,du+\int_{(a_x,b_x]}\widetilde m^Q_u(dy)\,du\right).
	\end{align*}
	We can check that	
	$$\int_{\mathcal V^+_x}\widetilde m^Q_u(dy)\,du+\int_{\mathcal V^-_x}\widetilde m^Q_u(dy)\,du=\int_{\mathcal V^+_x\cup\mathcal V^-_x}\delta_{F_\nu^{-1}(u)}(dy)\,du$$
	and deduce that $M^Q_x(dy)=\frac{1}{\mu(\{x\})}\left(\int_{\mathcal V^+_x\cup\mathcal V^-_x}\delta_{F_\nu^{-1}(u)}(dy)\,du+\int_{(a_x,b_x]}\widetilde m^Q_u(dy)\,du\right)$. Since $(a_x,b_x]\subset\widetilde{\mathcal U}^+\cup\widetilde{\mathcal U}^-$, we have from the foregoing that $\pi^Q_u$ and therefore $\widetilde m^Q_u$ does not depend on $(Q_x)_{x\in A_\mu}$ for $du$-almost all $u\in(a_x,b_x]$, hence $M^Q_x$ is independent of $(Q_{x'})_{x'\in A_\mu}$.
\item The continuous functions $\Delta_+$ and $\Delta_-$ defined in \eqref{GDelta+Delta-} respectively coincide with $\chi_+$ and $\chi_-$ outside the jumps of $F_\mu$. Let $u\notin\bigcup_{x\in A_\mu}[F_\mu(x-),F_\mu(x)]$. We have
		\begin{align*}
		&\{v\in(0,u)\mid\mu(\{F_\mu^{-1}(v)\})=0\}\cup\bigcup_{x\in A_\mu\cap(-\infty,F_\mu^{-1}(u))}(F_\mu(x-),F_\mu(x)]\\
		&\subset(0,u)\\
		&\subset\{v\in(0,u)\mid\mu(\{F_\mu^{-1}(v)\})=0\}\cup\bigcup_{x\in A_\mu\cap(-\infty,F_\mu^{-1}(u))}[F_\mu(x-),F_\mu(x)],
		\end{align*}
	where the sets in the first union are disjoint. Hence
		\begin{align*}
		\Delta_\pm(u)&=\int_0^u(F_\mu^{-1}-G)^\pm(v)\1_{\{\mu(\{F_\mu^{-1}(v)\})=0\}}\,dv+\sum_{x\in A_\mu\cap(-\infty,F_\mu^{-1}(u))}\int_{F_\mu(x-)}^{F_\mu(x)}(F_\mu^{-1}-G)^\pm(v)\,dv,\\
		\chi_\pm(u)&=\int_0^u(F_\mu^{-1}-F_\nu^{-1})^\pm(v)
		\1_{\{\mu(\{F_\mu^{-1}(v)\})=0\}}\,dv+\sum_{x\in A_\mu\cap(-\infty,F_\mu^{-1}(u))}\int_{F_\mu(x-)}^{F_\mu(x)}(F_\mu^{-1}-F_\nu^{-1})^\pm(v)\1_{\widetilde{\mathcal U}^\pm}(v)\,dv.
		\end{align*}
		
		For all $v\in(0,1)$ such that $\mu(\{F_\mu^{-1}(v)\})=0$, $G(v)=F_\nu^{-1}(v)$. Moreover, for $x\in A_\mu$, $F_\mu^{-1}$ and $G$ are constant and respectively equal to $x$ and $\frac{1}{F_\mu(x)-F_\mu(x-)}\int_{F_\mu(x-)}^{F_\mu(x)}F_\nu^{-1}(v)\,dv$ on  $(F_\mu(x-),F_\mu(x)]$, so that, using the definition of $\widetilde{\mathcal U}^\pm$ for the second equality, we obtain
		\begin{align*}
		\int_{F_\mu(x-)}^{F_\mu(x)}(F_\mu^{-1}-F_\nu^{-1})^\pm(v)\1_{\widetilde{\mathcal U}^\pm}(v)\,dv& =\int_{F_\mu(x-)}^{F_\mu(x)}(x-F_\nu^{-1}(v))^\pm\1_{\widetilde{\mathcal U}^\pm}(v)\,dv
                                                                                                             =\left(\int_{F_\mu(x-)}^{F_\mu(x)}(x-F_\nu^{-1}(v))\,dv\right)^\pm\\&=\left(\int_{F_\mu(x-)}^{F_\mu(x)}(x-G(v))\,dv\right)^\pm
		=\int_{F_\mu(x-)}^{F_\mu(x)}(F_\mu^{-1}-G)^\pm(v)\,dv.
		\end{align*}
		
		We deduce that $\Delta_\pm(u)=\chi_\pm(u)$. 
                For all $x\in A_\mu$, $u\mapsto\Delta_\pm$ is affine on $[F_\mu(x-),F_\mu(x)]$, whereas $\chi_\pm$ realises an a priori different continuous interpolation. However the fact that $\Delta_\pm$ and $\chi_\pm$ coincide outside the jumps of $F_\mu$ indicates that the constructions of the martingale couplings $M$ defined in Section \ref{secmartreang} and $M^Q$ defined in the proof of Proposition \ref{HFmartrearrangement} are very close.\end{enumerate}
\end{rk}

Let us now illustrate with the following example that the martingale coupling $M^Q$ constructed in the proof of Proposition \ref{HFmartrearrangement} is in general different from the inverse transform martingale coupling $M^{IT}$.

\begin{example} Let $\mu=\frac12(\delta_{-2}+\delta_2)$ and $\nu=\frac13\delta_{-4}+\frac16\delta_{-1}+\frac16\delta_1+\frac13\delta_4$. Let $Q\in\mathcal Q$, $(\widetilde m^Q_u)_{u\in(0,1)}$ be defined by \eqref{defmQ2} and $M^Q$ be defined by \eqref{defMQ2}. With the definition \eqref{defU-U+U0} in mind, we have
	\[
	\mathcal U^+=\left(0,\frac13\right]\cup\left(\frac12,\frac23\right]\quad\textrm{and}\quad\mathcal U^-=\left(\frac13,\frac12\right]\cup\left(\frac23,1\right).
	\]
	
	Let $u\in(0,1)$ be such that $F_\mu^{-1}(u)=-2$, or equivalently $u\le\frac12$. If $u\le\frac13$ then $u\in\mathcal U^+$. For all $v\in(0,1)$, $F_\nu^{-1}(v)=1\iff\frac12<v\le\frac23\implies v\in\mathcal U^+$, so $\widetilde m^Q_u(\{1\})=0$. Else if $u>\frac13$, then $F_\nu^{-1}(u)=-1$, so 
	$\widetilde m^Q_u(\{1,4\})=0$. Since $\widetilde m^Q_u$ must have mean $-2$, $\widetilde m^Q_u(\{1,4\})=0$. We deduce that $M^Q(\{(-2,1)\})=0$. Similarly we find $M^Q(\{(2,-1)\})=0$. Since $\nu(\{-1\})=\nu(\{1\})=\frac16$, we deduce that $M^Q(\{(-2,-1)\})=M^Q(\{(2,1)\})=\frac16$. Then the martingale constraint imposes
	\[
	M^Q=\frac{13}{48}\delta_{(-2,-4)}+\frac16\delta_{(-2,-1)}+\frac{1}{16}\delta_{(-2,4)}+\frac{1}{16}\delta_{(2,-4)}+\frac16\delta_{(2,1)}+\frac{13}{48}\delta_{(2,4)},
	\]
	which coincides therefore with the martingale coupling $M^Q$ constructed in the proof of Proposition \ref{HFmartrearrangement}.
	
	We easily find that the Hoeffding-Fréchet coupling $\pi^{HF}$ between $\mu$ and $\nu$ is given by
	\[
	\pi^{HF}=\frac13\delta_{(-2,-4)}+\frac16\delta_{(-2,-1)}+\frac16\delta_{(2,1)}+\frac13\delta_{(2,4)},
	\]
	and for all $u\in(0,\frac12)$, resp. $u\in(\frac12,1)$, we have $\varphi(u)=u+\frac12$, resp. $\varphi(u)=u-\frac12$. Then one can easily check that the probability kernel $(m_u)_{u\in(0,1)}$ defined for all $u\in(0,\frac12)$ by
	\[
	m_u=\frac59\delta_{-4}+\frac{5}{18}\delta_{-1}+\frac{1}{18}\delta_1+\frac19\delta_4,
	\]
	and for all $u\in(\frac12,1)$ by
	\[
	m_u=\frac19\delta_{-4}+\frac{1}{18}\delta_{-1}+\frac{5}{18}\delta_1+\frac{5}{9}\delta_4,
	\]
	satisfies \eqref{propertiesmITMCbis} with $\pi=\pi^{HF}$. Then the martingale coupling $M$ defined by \eqref{defITMCbis} is
	\[
	M=\frac{5}{18}\delta_{(-2,-4)}+\frac{5}{36}\delta_{(-2,-1)}+\frac{1}{36}\delta_{(-2,1)}+\frac{1}{18}\delta_{(-2,4)}+\frac{1}{18}\delta_{(2,-4)}+\frac{1}{36}\delta_{(2,-1)}+\frac{5}{36}\delta_{(2,1)}+\frac{5}{18}\delta_{(2,4)},
	\]
	hence $M^Q\neq M$.
\end{example}

	\subsection{An example of \texorpdfstring{$\mathcal{AW}_\rho$}{AWp}-minimal martingale rearrangement for \texorpdfstring{$\rho>2$}{p>2}}

Let $f:\R\to\R$ and $q:\R\to[0,1]$ be defined for all $y\in\R$ by
\begin{align*}
	f(y)&=\frac{1+\textrm e}{6}\left(\textrm e^{-\vert y\vert}\1_{\{\vert y\vert\ge1\}}+\frac{\textrm e^{-\vert y\vert}+1}{1+\textrm e}\1_{\{\vert y\vert<1\}}\right);\\
	q(y)&=\frac{\textrm e}{1+\textrm e}\1_{\{y\le-1\}}+\frac{1}{1+\textrm e^y}\1_{\{-1<y<1\}}+\frac{1}{1+\textrm e}\1_{\{y\ge1\}}.
\end{align*}

Let $T:\R\to\R$ be the inverse of the continuous increasing map $y\mapsto y+2q(y)-1$, so that for all $y\in\R$, $q(y)=\frac{1+T^{-1}(y)-y}{2}$. Let $\nu(dy)=f(y)\,dy$ and $\mu=(T^{-1})_\sharp\nu$. We can easily compute
\[
\sup_{x\in\R}\vert x-T(x)\vert=\sup_{y\in\R}\vert T^{-1}(y)-y\vert=\sup_{y\in\R}\vert2q(y)-1\vert=\frac{\e-1}{\e+1}<1.
\]

By considering the cases $y\le-2$, $-2<y\le-1$, $-1<y\le 0$, $0<y\le1$, $1<y\le2$ and $2\le y$, it is easy to check that \begin{equation}\label{conditionlemmaliftedMartReagUniqueRhoGe2}
	\forall y\in\R,\quad q(y-1)f(y-1)+(1-q(y+1))f(y+1)=f(y).
\end{equation}
Let \begin{equation}\label{expressionlemmaliftedMartReagUniqueRhoGe2}
		m^0_u=q(F_\nu^{-1}(u))\,\delta_{F_\nu^{-1}(u)+1}+(1-q(F_\nu^{-1}(u)))\,\delta_{F_\nu^{-1}(u)-1}
	\end{equation}
and $h:\R\to\R$ be measurable and bounded. Then
\begin{align*}
\int_{(0,1)\times\R}h(y)\,du\,m^0_u(dy)&=\int_{(0,1)}\left(q(F_\nu^{-1}(u))h(F_\nu^{-1}(u)+1)+(1-q(F_\nu^{-1}(u)))h(F_\nu^{-1}(u)-1)\right)\,du\\
&=\int_\R\left(q(y)h(y+1)+(1-q(y))h(y-1)\right)\,\nu(dy)\\
&=\int_\R q(y)h(y+1)f(y)\,dy+\int_\R(1-q(y))h(y-1)f(y)\,dy\\
&=\int_\R\left(q(y-1)f(y-1)+(1-q(y+1))f(y+1)\right)h(y)\,dy\\
&=\int_\R f(y)h(y)\,dy,
\end{align*}
where we used \eqref{conditionlemmaliftedMartReagUniqueRhoGe2} for the last equality. We deduce that $\int_{u\in(0,1)}m^0_u(dy)\,du=\nu(dy)$. Hence $\widehat M^0=\lambda_{(0,1)}\times\delta_{F_\mu^{-1}(u)}\times m^0_u\in\widehat\Pi^{\mathrm M}(\mu,\nu)$.
Let us now show that $\widehat M^0$ is the only $\widehat{\mathcal{AW}}_\rho$-minimal martingale rearrangement coupling of $\widehat \pi^{HF}$ for $\rho>2$. Since $\vert y-F_\nu^{-1}(u)\vert$ is $du\,m^0_u(dy)$-almost everywhere constant, we have
	\begin{align}\label{CSmajorationAWrhorhoM0lifted}\begin{split}
			\left(\int_{(0,1)}\mathcal W_2^2(m^0_u,\delta_{F_\nu^{-1}(u)})\,du\right)^{\rho/2}&=\left(\int_{(0,1)}\int_\R\vert F_\nu^{-1}(u)-y\vert^2\,m^0_u(dy)\,du\right)^{\rho/2}\\
			&=\int_{(0,1)}\int_\R\vert F_\nu^{-1}(u)-y\vert^\rho\,m^0_u(dy)\,du\ge\widehat{\mathcal{AW}}_\rho^\rho(\widehat M^0,\widehat \pi^{HF}).
		\end{split}
	\end{align}
	
	Since by Proposition \ref{liftedQuadraticMarReag} and its proof, $\int_{(0,1)}\mathcal W_2^2(m_u,\delta_{F_\nu^{-1}(u)})\,du$ does not depend on $\widehat M=\lambda_{(0,1)}\times\delta_{F_\mu^{-1}(u)}\times m_u\in\widehat\Pi^{\mathrm M}(\mu,\nu)$, to conclude it is enough to show that for $\widehat M\neq\widehat M^0$,
	\begin{equation}\label{CSM0uniquemartreaglifted}
		\widehat{\mathcal{AW}}_\rho^\rho(\widehat M,\widehat \pi^{HF})>\left(\int_{(0,1)}\mathcal W_2^2(m_u,\delta_{F_\nu^{-1}(u)})\,du\right)^{\rho/2}.
              \end{equation}

	Let $\chi\in\Pi(\lambda_{(0,1)},\lambda_{(0,1)})$ be optimal for $\widehat{\mathcal{AW}}_\rho(\widehat M,\widehat \pi^{HF})$. Suppose first that $\chi(du,du')=\lambda_{(0,1)}(du)\,\delta_u(du')$. Since $$\int_{(0,1)}\int_\R\vert y-F_\nu^{-1}(u)\vert^2\,m_u(dy)\,du=\int_{(0,1)}\mathcal W_2^2(m_u,\delta_{F_\nu^{-1}(u)})\,du=\int_{(0,1)}\mathcal W_2^2(m^0_u,\delta_{F_\nu^{-1}(u)})\,du=1,$$ and $\widehat M\neq\widehat M^0$,  $\vert y-F_\nu^{-1}(u)\vert$ is not $du\,m_u(dy)$-almost everywhere constant, so by Jensen's strict inequality we have
	\begin{align*}
		\widehat{\mathcal{AW}}_\rho^\rho(\widehat M,\widehat \pi^{HF})&=\int_{(0,1)}\mathcal W_\rho^\rho(m_u,\delta_{F_\nu^{-1}(u)})\,du=\int_{\R\times(0,1)}\vert y-F_\nu^{-1}(u)\vert^\rho\,m_u(dy)\,du\\
		&>\left(\int_{\R\times(0,1)}\vert y-F_\nu^{-1}(u)\vert^2\,m_u(dy)\,du\right)^{\rho/2}=\left(\int_{(0,1)}\mathcal W_2^2(m_u,\delta_{F_\nu^{-1}(u)})\,du\right)^{\rho/2}.
	\end{align*}
	
	Else if $\chi(du,du')\neq\lambda_{(0,1)}(du)\,\delta_u(du')$, then using Jensen's inequality for the third inequality and \eqref{calculAW2lifted} for the fourth, we have
	\begin{align}\label{calculAWrhorhoM0lifted}\begin{split}
			\widehat{\mathcal{AW}}_\rho^\rho(\widehat M,\widehat\pi^{HF})&>\int_{(0,1)\times(0,1)}\mathcal W_\rho^\rho(m_u,\delta_{F_\nu^{-1}(u')})\,\chi(du,du')\ge\int_{(0,1)\times(0,1)}\mathcal W_2^\rho(m_u,\delta_{F_\nu^{-1}(u')})\,\chi(du,du')\\
			&\ge\left(\int_{(0,1)\times(0,1)}\mathcal W_2^2(m_u,\delta_{F_\nu^{-1}(u')})\,\chi(du,du')\right)^{\rho/2}\ge\left(\int_{(0,1)}\mathcal W_2^2(m_u,\delta_{F_\nu^{-1}(u)})\,du\right)^{\rho/2},
		\end{split}
	\end{align}
	which proves \eqref{CSM0uniquemartreaglifted} and therefore that $\widehat M^0$ is the only $\widehat{\mathcal{AW}}_\rho$-minimal martingale rearrangement coupling of $\widehat \pi^{HF}$. Note that \eqref{calculAWrhorhoM0lifted} is valid for $\widehat M=\widehat M^0$, which in view of \eqref{CSmajorationAWrhorhoM0lifted} shows that $\lambda_{(0,1)}(du)\,\delta_u(du')$ is the only coupling between $\lambda_{(0,1)}$ and $\lambda_{(0,1)}$ optimal for $\widehat{\mathcal{AW}}_\rho(\widehat M^0,\widehat \pi^{HF})$. With similar arguments we prove that $$M^0(dx,dy)=\mu(dx)\left(q(T(x))\,\delta_{T(x)+1}(dy)+(1-q(T(x)))\,\delta_{T(x)-1}(dy)\right)$$ is the only $\mathcal{AW}_\rho$-minimal martingale rearrangement coupling of $\pi^{HF}$, and $\mu(dx)\,\delta_x(dx')$ is the only coupling between $\mu$ and $\mu$ optimal for $\mathcal{AW}_\rho(M^0,\pi^{HF})$.

\begin{remark} According to \cite[Proposition 2.11]{JoMa18}, any element of our family $(M^Q)_{Q\in\mathcal Q}$ of martingale couplings minimises $\int_{\R\times\R}\vert y-T(x)\vert\,M(dx,dy)$ among all martingale couplings $M$ between $\mu$ and $\nu$ and satisfies $\int_{\R\times\R}\vert y-T(x)\vert\,M^Q(dx,dy)=\mathcal W_1(\mu,\nu)$. According to \cite[Proposition 3.5]{JoMa18}, since $\rho>2$, the inverse transform martingale coupling $M^{IT}$ minimises $\int_{\R\times\R}\vert y-T(x)\vert^\rho\,M^Q(dx,dy)$ (it maximises this integral for $1<\rho<2$) among all martingale couplings $M^Q$ parametrised by $Q\in\mathcal Q$. Yet the optimum over the whole set of martingale couplings between $\mu$ and $\nu$ is not $M^{IT}$ but $M^0$.
	
	Indeed, by construction we have $M^{IT}_x(\{T(x)\})>0$, $\mu(dx)$-almost everywhere, hence $M^{IT}\neq M^0$ and $\vert y-T(x)\vert$ is not $M^{IT}(dx,dy)$-almost everywhere constant. Then by Jensen's strict inequality and the fact that $\int_{\R\times\R}\vert y-T(x)\vert^2\,M(dx,dy)$ does not depend on the choice of $M\in\Pi^{\mathrm M}(\mu,\nu)$, we get
	\begin{align}\label{MITminimisepas}\begin{split}
	\left(\int_{\R\times\R}\vert y-T(x)\vert^\rho\,M^{IT}(dx,dy)\right)^{1/\rho}&>
        \left(\int_{\R\times\R}\vert y-T(x)\vert^2\,M^0(dx,dy)\right)^{1/2}\\
	&=\left(\int_{\R\times\R}\vert y-T(x)\vert^\rho\,M^0(dx,dy)\right)^{1/\rho}.
	\end{split}
	\end{align}
	
	Note that when $1<\rho<2$, one can readily adapt the above arguments to show that $M^0$ is the only $\mathcal{AW}_\rho$-maximal martingale rearrangement coupling of $\pi^{HF}$, i.e. it maximises $\mathcal{AW}_\rho(\pi^{HF},M)$ over $M\in\Pi^{\mathrm M}(\mu,\nu)$, and reasoning similarly to \eqref{MITminimisepas} yields
	\[
	\left(\int_{\R\times\R}\vert y-T(x)\vert^\rho\,M^{IT}(dx,dy)\right)^{1/\rho}<\left(\int_{\R\times\R}\vert y-T(x)\vert^\rho\,M^0(dx,dy)\right)^{1/\rho}.
	\]
\end{remark}

\section{Stability of the inverse transform martingale coupling}
\label{sec:StabilityITMC}

In the next proposition we prove the stability in $\mathcal{AW}_\rho$, for $\rho\ge1$, of the lifted inverse transform martingale coupling, defined for all $\mu,\nu\in\mathcal P_1(\R)$ in the convex order by
\[
\widehat M^{IT}(du,dx,dy)=\lambda_{(0,1)}(du)\,\delta_{F_\mu^{-1}(u)}(dx)\,\widetilde m^{IT}_u(dy),
\]
where $(\widetilde m^{IT}_u)_{u\in(0,1)}$ is defined by \eqref{mtildeIT}. In another proposition, we give a condition on the first marginals ensuring that the inverse transform martingale coupling is stable in $\mathcal{AW}_\rho$.

\begin{prop2}\label{stabilityAW1ITMClifted} Let $\rho\ge1$ and $\mu_n,\nu_n\in\mathcal P_\rho(\R)$, $n\in\N$, be in convex order and respectively converge to $\mu$ and $\nu$ in $\mathcal W_\rho$ as $n\to+\infty$. Then
	\begin{equation}\label{stabilityAW1ITMCliftedconclusion}
	\widehat{\mathcal{AW}}^\rho_\rho(\widehat M^{IT}_n,\widehat M^{IT})\le\int_{(0,1)}\mathcal W_\rho^\rho((\widetilde m^{IT}_n)_u,\widetilde m^{IT}_u)\,du\underset{n\to+\infty}{\longrightarrow}0,
	\end{equation}
	where $\widehat M^{IT}_n=\lambda_{(0,1)}\times\delta_{F_{\mu_n}^{-1}(u)}\times(\widetilde m^{IT}_n)_u$, resp. $M^{IT}=\lambda_{(0,1)}\times\delta_{F_\mu^{-1}(u)}\times\widetilde m^{IT}_u$, denotes the lifted inverse transform martingale coupling between $\mu_n$ and $\nu_n$, resp. $\mu$ and $\nu$.
\end{prop2}
\begin{proof} Since
	\[
	\widehat{\mathcal{AW}}^\rho_\rho(\widehat M^{IT}_n,\widehat M^{IT})\le\int_{(0,1)}\mathcal W_\rho^\rho((\widetilde m^{IT}_n)_u,\widetilde m^{IT}_u)\,du,
	\]
	it suffices to show that the right-hand side vanishes as $n$ goes to $+\infty$. This is achieved in two steps. First, we prove that, on the probability space $(0,1)$ endowed with the Lebesgue measure, the family of random variables $\left(\mathcal W_\rho^\rho\left((\widetilde m^{IT}_n)_u,\widetilde m^{IT}_u\right)
		\right)_{n\in\N}$ is uniformly integrable. Second we show for $du$-almost all $u\in(0,1)$ that
		\begin{equation}\label{eq:pointwise convergence stability ITMClifted}
		\mathcal W_\rho^\rho\left((\widetilde m^{IT}_n)_u,\widetilde m^{IT}_u\right)\underset{n\to+\infty}{\longrightarrow}0
		\end{equation}
		
		Let us begin with the uniform integrability. For $u\in(0,1)$, we can estimate
		\begin{equation}
		\mathcal W_\rho^\rho\left((\widetilde m^{IT}_n)_u,\widetilde m^{IT}_u\right)\le2^{\rho-1}\int_\R\vert y\vert^\rho\,\left((\widetilde m^{IT}_n)_u(dy)+\widetilde m^{IT}_u(dy)\right).
		\end{equation}
		
		According to \cite[Lemma 2.6]{JoMa18}, $M^{IT}$ is the image of $\1_{(0,1)}(u)\,du\,\widetilde m^{IT}_u(dy)$ by $(u,y)\mapsto(F_\mu^{-1}(u),y)$ so that the second marginal of this measure is $\nu(dy)$. Therefore
		\[
		\int_{(0,1)}\int_\R\vert y\vert^\rho\,\widetilde m^{IT}_u(dy)\,du=\int_\R\vert y\vert^\rho\,\nu(dy)<+\infty.
		\]
		
		Hence it is enough to check the uniform integrability of $\left(\int_\R\vert y\vert^\rho\,(\widetilde m^{IT}_n)_u(dy)\right)_{n\in\N}$ to ensure that of $\left(\mathcal W_\rho^\rho\left((\widetilde m^{IT}_n)_u,\widetilde m^{IT}_u\right)
		\right)_{n\in\N}$.
		Since the second marginal of the measure $\1_{(0,1)}(u)\,du\,(\widetilde m^{IT}_n)_u(dy)$ is $\nu_n(dy)$, this measure also writes $\nu_n(dy)k^n_y(du)$ for some probability kernel $k^n$ on $\R\times (0,1)$. Let $\varepsilon>0$ and $A$ be a measurable subset of $(0,1)$ such that $\lambda(A)<\varepsilon$. For all $n\in\N$, we have
		\[
		J_n(A):=\int_A\int_\R\vert y\vert^\rho\,(\widetilde m^{IT}_n)_{u}(dy)\,du=\int_\R\vert y\vert^\rho\,\tau_n(dy),
		\]
		where $\tau_n(dy)=\int_{u=0}^1\mathds1_A(u)\, k^n_y(du)\,\nu_n(dy)$ is such that $\tau_n\le\nu_n$ and $\tau_n(\R)=\lambda(A)$. Hence
		\[
		\sup_{A\in\mathcal B((0,1)),\ \lambda(A)\le\varepsilon}J_n(A)\le I_\varepsilon^\rho(\nu_n),
		\]
		where $I^\rho_\varepsilon(\zeta)$ is defined for all $\zeta\in\mathcal P_\rho(\R)$ as the supremum of $\int_\R\vert y\vert^\rho\,\tau(dy)$ over all finite measures $\tau$ on $\R$ such that $\tau\le\zeta$ and $\tau(\R)\le\varepsilon$. Let $\eta>0$. By \cite[Lemma 3.1 (b)]{BeJoMaPa1}, since $\nu\in{\mathcal P}_\rho(\R)$, there exists $\varepsilon'>0$ such that $I^\rho_{\varepsilon'}(\nu)<\eta$. Let then $N\in\N$ be such that for all $n>N$, $\mathcal W_\rho^\rho(\nu_n,\nu)<\eta$, so that by \cite[Lemma 3.1 (c)]{BeJoMaPa1}, $I^\rho_{\varepsilon'}(\nu_n)\le2^{\rho-1}(\mathcal W_\rho^\rho(\nu_n,\nu)+I^\rho_{\varepsilon'}(\nu))<2^\rho\eta$. By \cite[Lemma 3.1 (b)]{BeJoMaPa1} again there exists $\varepsilon''>0$ such that for all $n\le N$, $I^\rho_{\varepsilon''}(\nu_n)<2^\rho\eta$. We deduce that for all $\varepsilon\in(0,\varepsilon'\wedge\varepsilon'')$,
		\[
		\sup_{n\in\N}\sup_{A\in\mathcal B((0,1)),\ \lambda(A)\le\varepsilon}J_n(A)\le2^\rho\eta,
		\]
		which yields uniform integrability of $\left(\int_\R\vert y\vert^\rho\,(\widetilde m^{IT}_n)_u(dy)\right)_{n\in\N}$.
		
		Next, we show the $du$-almost everywhere pointwise convergence of \eqref{eq:pointwise convergence stability ITMClifted}. Since, by monotonicity, $u\mapsto (F_\mu^{-1}(u),F_\nu^{-1}(u))$ is continuous $du$-almost everywhere on $(0,1)$ and, then, the weak convergence implies that \begin{equation}\label{convquantlifted}
		(F_{\mu_n}^{-1}(u),F_{\nu_n}^{-1}(u))\underset{n\to+\infty}{\longrightarrow}(F_\mu^{-1}(u),F_\nu^{-1}(u)),
		\end{equation} we suppose without loss of generality that this convergence holds.
		Let $n\in\N$. Let $\Psi_{n+}$, resp. $\Psi_{n-}$, be the map defined by the left-hand, resp. right-hand side of \eqref{defPsi+Psi-2}, with $(\mu_n,\nu_n)$ replacing $(\mu,\nu)$. By \eqref{mtildeIT}, 
		\[
		(\widetilde m_n^{IT})_u=p_n(u)\delta_{F_{\nu_n}^{-1}(\varphi_n(u))}+(1-p_n(u))\delta_{F_{\nu_n}^{-1}(u)}\quad\mbox{ with }\quad p_n(u)=\1_{\{F_{\mu_n}^{-1}(u)\neq F_{\nu_n}^{-1}(u)\}}\frac{F_{\mu_n}^{-1}(u)-F_{\nu_n}^{-1}(u)}{F_{\nu_n}^{-1}(\varphi_n(u))-F_{\nu_n}^{-1}(u)}\in[0,1]\]
		and $\varphi_n(u)=\Psi_{n-}^{-1}(\Psi_{n+}(u))$.
		
		Suppose first that $u\in\mathcal U_0$ i.e. $F_\mu^{-1}(u)=F_\nu^{-1}(u)$, so that $\widetilde m^{IT}_u=\delta_{F_\nu^{-1}(u)}$. We have
		\begin{align}\label{W1 difference U0lifted}
		\begin{split}	
		\mathcal W_\rho^\rho((\widetilde m_n^{IT})_u,\widetilde m^{IT}_u)&= p_n(u)\vert F_{\nu_n}^{-1}(\varphi_n(u))-F_\nu^{-1}(u)\vert^\rho+(1-p_n(u))\vert F_{\nu_n}^{-1}(u)-F_\nu^{-1}(u)\vert^\rho\\
		&\le2^{\rho-1}p_n(u)\vert F_{\nu_n}^{-1}(\varphi_n(u))-F_{\nu_n}^{-1}(u)\vert^\rho+(2^{\rho-1}p_n(u)+1-p_n(u))\vert F_{\nu_n}^{-1}(u)-F_\nu^{-1}(u)\vert^\rho\\
		&\le2^{\rho-1}\1_{\{F_{\mu_n}^{-1}(u)\neq F_{\nu_n}^{-1}(u)\}}\vert F_{\mu_n}^{-1}(u)-F_{\nu_n}^{-1}(u)\vert^\rho+2^{\rho-1}\vert F_{\nu_n}^{-1}(u)-F_\nu^{-1}(u)\vert^\rho\\
		&\le2^{2\rho-2}\vert F_{\mu_n}^{-1}(u)-F_{\mu}^{-1}(u)\vert^\rho+2^{\rho-1}(2^{\rho-1}+1)\vert F_{\nu_n}^{-1}(u)-F_\nu^{-1}(u)\vert^\rho,
		\end{split}
		\end{align}
		where the right-hand side goes to $0$ as $n\to\infty$ by \eqref{convquantlifted}.
		
		Suppose next that $u\in\mathcal U_+$ i.e. $F_{\mu}^{-1}(u)>F_\nu^{-1}(u)$, the case $u\in\mathcal U_-$ being treated in a similar way. Then without loss of generality
		\[
		\widetilde m^{IT}_u=p(u)\delta_{F_\nu^{-1}(\varphi(u))}+(1-p(u))\delta_{F_\nu^{-1}(u)}\mbox{ with }p(u)=\frac{F_\mu^{-1}(u)-F_\nu^{-1}(u)}{F_\nu^{-1}(\varphi(u))-F_\nu^{-1}(u)}
		\]
		and $\varphi(u)=\Psi_-^{-1}(\Psi_+(u))$. By \eqref{convquantlifted}, for $n$ large enough, $u\in\mathcal U_{n+}$ so that without loss of generality,  $p_n(u)=\frac{F_{\mu_n}^{-1}(u)-F_{\nu_n}^{-1}(u)}{F_{\nu_n}^{-1}(\varphi_n(u))-F_{\nu_n}^{-1}(u)}$ and checking \eqref{eq:pointwise convergence stability ITMClifted} amounts to show that
		\begin{equation}\label{eq:pointwise convergence stability ITMC 3lifted}
		F_{\nu_n}^{-1}(\varphi_n(u))\underset{n\to+\infty}{\longrightarrow}F_\nu^{-1}(\varphi(u)).
		\end{equation}
		
		It was shown in the proof of \cite[Proposition 5.10]{JoMa18} that $\Psi_{n+}$ converges uniformly to $\Psi_+$ on $[0,1]$ and for $dv$-almost every $v\in(0,1)$,
		\begin{equation}\label{eq:cvgence quentils stability ITMClifted}
		F_{\nu_n}^{-1}(\Psi_{n-}^{-1}(\Psi_{n+}(1)v))\underset{n\to+\infty}{\longrightarrow}F_\nu^{-1}(\Psi_-^{-1}(\Psi_+(1)v)).
		\end{equation}
		
		Let $\mathcal D$ be the set of discontinuities of $F_\nu^{-1}\circ\Psi_-^{-1}$, which is at most countable by monotonicity. Then \cite[Proposition 4.10, Chapter 0]{Revuz-Yor} yields
		\[
		0=\int_{\Psi_+(0)}^{\Psi_+(1)}\mathds1_\mathcal D(v)\,dv=\int_0^1\mathds1_{\{\Psi_+(u)\in\mathcal D\}}\,d\Psi_+(u).
		\]
		
		We deduce that for $du$-almost all $u\in\mathcal U_+$, $F_\nu^{-1}\circ\Psi_-^{-1}$ is continuous at $\Psi_+(u)$, which we suppose from now. According to \eqref{eq:cvgence quentils stability ITMClifted}, there exists $\varepsilon>0$ arbitrarily small such that
		\begin{align*}
		F_{\nu_n}^{-1}\left(\Psi_{n-}^{-1}\left(\Psi_{n+}(1)\frac{\Psi_+(u)-\varepsilon}{\Psi_+(1)}\right)\right)&\underset{n\to+\infty}{\longrightarrow}F_\nu^{-1}\left(\Psi_-^{-1}\left(\Psi_+(u)-\varepsilon\right)\right)\\
		\text{and}\quad F_{\nu_n}^{-1}\left(\Psi_{n-}^{-1}\left(\Psi_{n+}(1)\frac{\Psi_+(u)+\varepsilon}{\Psi_+(1)}\right)\right)&\underset{n\to+\infty}{\longrightarrow}F_\nu^{-1}\left(\Psi_-^{-1}\left(\Psi_+(u)+\varepsilon\right)\right).
		\end{align*}
		
		For $n$ large enough, we have $\Psi_+(u)\in\left[\Psi_{n+}(1)\frac{\Psi_+(u)-\varepsilon}{\Psi_+(1)},\Psi_{n+}(1)\frac{\Psi_+(u)+\varepsilon}{\Psi_+(1)}\right]$. Therefore, by monotonicity, we have
		\begin{align*}
		F_\nu^{-1}\left(\Psi_-^{-1}\left(\Psi_+(u)-\varepsilon\right)\right)&=\liminf_{n\to+\infty}F_{\nu_n}^{-1}\left(\Psi_{n-}^{-1}\left(\Psi_{n+}(1)\frac{\Psi_+(u)-\varepsilon}{\Psi_+(1)}\right)\right)\\
		&\le\liminf_{n\to+\infty}F_{\nu_n}^{-1}(\Psi_{n-}^{-1}(\Psi_{n+}(u)))\\
		&\le\limsup_{n\to+\infty}F_{\nu_n}^{-1}(\Psi_{n-}^{-1}(\Psi_{n+}(u)))\\
		&\le\limsup_{n\to+\infty}F_{\nu_n}^{-1}\left(\Psi_{n-}^{-1}\left(\Psi_{n+}(1)\frac{\Psi_+(u)+\varepsilon}{\Psi_+(1)}\right)\right)\\
		&= F_\nu^{-1}\left(\Psi_-^{-1}\left(\Psi_+(u)+\varepsilon\right)\right).
		\end{align*}
		
		Since $F_\nu^{-1}\circ\Psi_-^{-1}$ is continuous at $\Psi_+(u)$, we get when $\varepsilon$ vanishes the convergence \eqref{eq:pointwise convergence stability ITMC 3lifted}, which concludes the proof of \eqref{eq:pointwise convergence stability ITMClifted} and therefore \eqref{stabilityAW1ITMCliftedconclusion}
\end{proof}

\begin{prop2}\label{stabilityAW1ITMC} Let $\rho\ge1$ and $\mu_n,\nu_n\in\mathcal P_\rho(\R)$, $n\in\N$, be in convex order and respectively converge to $\mu$ and $\nu$ in $\mathcal W_\rho$ as $n\to+\infty$.
	Suppose that asymptotically, any jump of $F_\mu$ is included in a jump of $F_{\mu_n}$, that is
	\begin{equation}\label{conditionJumpOfFmuInJumpOfFmun}
	\forall x\in\R,\quad\mu(\{x\})>0\implies\exists(x_n)_{n\in\N}\in\R^\N,\quad F_{\mu_n}(x_n)\wedge F_\mu(x)-F_{\mu_n}(x_n-)\vee F_\mu(x-)\underset{n\to+\infty}{\longrightarrow}\mu(\{x\}),
	\end{equation}
	which is for instance satisfied if $\mu$ is non-atomic. Then
	\begin{equation}\label{eq:stabilityAW1ITMC}
	\mathcal{AW}_\rho(M_n^{IT},M^{IT})\underset{n\to+\infty}{\longrightarrow}0,
	\end{equation}
	where $M_n^{IT}$, resp. $M^{IT}$, denotes the inverse transform martingale coupling between $\mu_n$ and $\nu_n$, resp. $\mu$ and $\nu$.
\end{prop2}
\begin{rk}\label{counterExampleStabilityAW1} If \eqref{conditionJumpOfFmuInJumpOfFmun} is not satisfied, then $\eqref{eq:stabilityAW1ITMC}$ may not hold. Indeed, for $n\in\N^*$, let $\mu_n=\mathcal U((-1/n,1/n))$, $\mu=\delta_0$ and $\nu_n=\nu=\mathcal U((-1,1))$. We trivially have $M^{IT}(dx,dy)=\mu(dx)\,\nu(dy)$, so $\mathcal{AW}_1(M_n^{IT},M^{IT})\ge\int_{x\in\R}\mathcal W_1((M_n^{IT})_x,\nu)\,\mu_n(dx)$. However, for $n\in\N^*$, since $F_{\mu_n}$ is continuous, we have that for all $x\in\R$, $(M_n^{IT})_x=(\widetilde m^{IT}_n)_{F_\mu(x)}$, where according to \eqref{mtildeIT}, $((\widetilde m^{IT}_n)_u(dy))_{u\in(0,1)}$ is a probability kernel such that for all $u\in(0,1)$, there exist $a,b\in[-1,1]$ and $p\in[0,1]$ which satisfy $\widetilde m^{IT}_n(u,dy)=p\delta_a+(1-p)\delta_b$. Using the fact that the comonotonic coupling is optimal for the $\mathcal W_1$-distance, we get
		\[
		\mathcal W_1(p\delta_a+(1-p)\delta_b,\nu)=\int_0^p\vert a+1-2u\vert\,du+\int_p^1\vert b+1-2u\vert\,du.
		\]
		
		It is easy to show that $\int_0^p\vert a+1-2u\vert\,du$ is equal to $p(a+1-p)\ge p^2$ if $(a+1)/2>p$, and equal to $(a+1)^2/2-p(a+1)+p^2\le p^2$ if $(a+1)/2\le p$. Therefore, one can readily show that $\int_0^p\vert a+1-2u\vert\,du\ge p^2/2$, attained for $a=p-1$. Similarly, we have $\int_p^1\vert b+1-2u\vert\,du\ge(1-p)^2/2$, attained for $b=p$. We deduce that for all $(a,b,p)\in\R^2\times[0,1]$, $\mathcal W_1(p\delta_a+(1-p)\delta_b,\nu)\ge(p^2+(1-p)^2)/2\ge1/4$, attained for $p=1/2$, hence $\int_{x\in\R}\mathcal W_1((M^{IT}_n)_x,\nu)\,\mu_n(dx)\ge1/4$, which proves that \eqref{eq:stabilityAW1ITMC} is not satisfied.
\end{rk}


\begin{proof}[Proof of Proposition \ref{stabilityAW1ITMC}] By Lemma \ref{AWrhoCSAW1} below we may suppose without loss of generality that $\rho=1$. We have
	\begin{align*}
	\mathcal{AW}_1(M^{IT}_n,M^{IT})&\le\int_0^1\left(\vert F_{\mu_n}^{-1}(u)-F_\mu^{-1}(u)\vert+\mathcal W_1\left((M^{IT}_n)_{F_{\mu_n}^{-1}(u)},M^{IT}_{F_\mu^{-1}(u)}\right)\right)\,du\\
	&=\mathcal W_1(\mu_n,\mu)+\int_0^1\mathcal W_1\left((M^{IT}_n)_{F_{\mu_n}^{-1}(u)},M^{IT}_{F_\mu^{-1}(u)}\right)\,du.
	\end{align*}
	
	For $(x,v)\in\R\times[0,1]$ and $n\in\N$, let $\theta(x,v)=F_\mu(x-)+v\mu(\{x\})$, $\theta_n(x,v)=F_{\mu_n}(x-)+v\mu_n(\{x\})$ and
	\[
	(M_n)_x(dy)=\int_{v=0}^1\widetilde m^{IT}_{\theta_n(x,v)}(dy)\,dv.
	\]
	
	Then \eqref{copule 2} and the triangle inequality yield
	\begin{align*}
	&\int_{(0,1)}\mathcal W_1\left((M^{IT}_n)_{F_{\mu_n}^{-1}(u)},M^{IT}_{F_\mu^{-1}(u)}\right)\,du\\
	\le&\int_{(0,1)}\left(\mathcal W_1\left((M^{IT}_n)_{F_{\mu_n}^{-1}(u)},(M_n)_{F_{\mu_n}^{-1}(u)}\right)+\mathcal W_1\left((M_n)_{F_{\mu_n}^{-1}(u)},M^{IT}_{F_\mu^{-1}(u)}\right)\right)\,du\\
	\le&\int_{(0,1)^2}\left(\mathcal W_1\left((\widetilde m^{IT}_n)_{\theta_n(F_{\mu_n}^{-1}(u),v)},\widetilde m^{IT}_{\theta_n(F_{\mu_n}^{-1}(u),v)}\right)
	+\mathcal W_1\left(\widetilde m^{IT}_{\theta_n(F_{\mu_n}^{-1}(u),v)},\widetilde m^{IT}_{\theta(F_\mu^{-1}(u),v)}\right)\right)\,du\,dv\\
	=&\int_{(0,1)}\mathcal W_1\left((\widetilde m^{IT}_n)_u,\widetilde m^{IT}_u\right)\,du+\int_{(0,1)^2}
	\mathcal W_1\left(\widetilde m^{IT}_{\theta_n(F_{\mu_n}^{-1}(u),v)},\widetilde m^{IT}_{\theta(F_\mu^{-1}(u),v)}\right)\,du\,dv.
	\end{align*}
	
	In order to show \eqref{eq:stabilityAW1ITMC}, it is therefore sufficient by \eqref{stabilityAW1ITMCliftedconclusion} to prove that the second summand in right-hand side vanishes when $n$ goes to $+\infty$. This is achieved in two steps. First, we prove that, on the probability space $(0,1)^2$ endowed with the Lebesgue measure, the family of random variables $\left(\mathcal W_1\left(\widetilde m^{IT}_{\theta_n(F_{\mu_n}^{-1}(u),v)},\widetilde m^{IT}_{\theta(F_\mu^{-1}(u),v)}\right)\right)_{n\in\N}$ is uniformly integrable. Second, we show for $du\,dv$-almost every $(u,v)\in(0,1)^2$ that
	\begin{equation}\label{eq:pointwise convergence stability ITMC}\mathcal W_1\left(\widetilde m^{IT}_{\theta_n(F_{\mu_n}^{-1}(u),v)},\widetilde m^{IT}_{\theta(F_\mu^{-1}(u),v)}\right)\underset{n\to+\infty}{\longrightarrow}0.
	\end{equation}
	
	Let us begin with the uniform integrability. For $(u,v)\in(0,1)^2$, we can estimate
	\begin{equation*}
	\mathcal W_1\left(\widetilde m^{IT}_{\theta_n(F_{\mu_n}^{-1}(u),v)},\widetilde m^{IT}_{\theta(F_\mu^{-1}(u),v)}\right)\le\int_\R\vert y\vert\,\left(\widetilde m^{IT}_{\theta_n(F_{\mu_n}^{-1}(u),v)}(dy)+\widetilde m^{IT}_{\theta(F_\mu^{-1}(u),v)}(dy)\right).
	\end{equation*}
	
	For each nonnegative measurable function $f:\R\to\R$, we have by \eqref{copule 2}
	\begin{align*}
	\int_{(0,1)^2}f\left(\int_\R \vert y\vert\,\widetilde m^{IT}_{\theta_n(F_{\mu_n}^{-1}(u),v)}(dy)\right)\,du\,dv&=\int_{(0,1)^2}f\left(\int_\R \vert y\vert\,\widetilde m^{IT}_{\theta(F_{\mu}^{-1}(u),v)}(dy)\right)\,du\,dv\\
	&=\int_{(0,1)}f\left(\int_\R \vert y\vert\,\widetilde m^{IT}_u(dy)\right)\,du.
	\end{align*}
	
	According to \cite[Lemma 2.6]{JoMa18}, $M^{IT}$ is the image of $\1_{(0,1)}(u)\,du\,\widetilde m^{IT}_u(dy)$ by $(u,y)\mapsto(F_\mu^{-1}(u),y)$ so that the second marginal of this measure is $\nu(dy)$, hence the random variables $\left(\mathcal W_1\left(\widetilde m^{IT}_{\theta_n(F_{\mu_n}^{-1}(u),v)},\widetilde m^{IT}_{\theta(F_\mu^{-1}(u),v)}\right)\right)_{n\in\N}$ are uniformly integrable.
	
	Next, we show the $du\,dv$-almost everywhere pointwise convergence of \eqref{eq:pointwise convergence stability ITMC}. Let $w\in(0,1)$ be in the set of continuity points of $F_\mu^{-1}$, $F_\nu^{-1}$, $F_\nu^{-1}\circ\Psi_-^{-1}\circ\Psi_+$ and $F_\nu^{-1}\circ\Psi_+^{-1}\circ\Psi_-$. Recall that we have
	\[
	\widetilde m^{IT}_w=p(w)\delta_{F_\nu^{-1}(\varphi(w))}+(1-p(w))\delta_{F_{\nu}^{-1}(\varphi(w))}\quad\mbox{ with }\quad p(w)=\1_{\{F_\mu^{-1}(w)\neq F_\nu^{-1}(w)\}}\frac{F_{\mu}^{-1}(w)-F_{\nu}^{-1}(w)}{F_\nu^{-1}(\varphi(w))-F_{\nu}^{-1}(w)}\in[0,1].\]
	
	Let $(w_n)_{n\in\N}$ be a sequence with values in $(0,1)$ converging to $w$ and let us show that
	\begin{equation}\label{aux convergence mnIT}
	\mathcal W_1(\widetilde m^{IT}_{w_n},\widetilde m^{IT}_w)\underset{n\to+\infty}{\longrightarrow}0.
	\end{equation}
	
	Suppose first that $w\in\mathcal U_0$ i.e. $F_\mu^{-1}(w)=F_\nu^{-1}(w)$. Then a computation similar to \eqref{W1 difference U0lifted} yields
	\[
	\mathcal W_1(\widetilde m^{IT}_{w_n},\widetilde m^{IT}_w)\le|F_{\mu}^{-1}(w_n)-F_{\mu}^{-1}(w)|+2|F_{\nu}^{-1}(w_n)-F_\nu^{-1}(w)|,
	\]
	where the right-hand side goes to $0$ as $n\to+\infty$ by continuity of $F_\mu^{-1}$ and $F_\nu^{-1}$ at $w$.
	
	Suppose next that $w\in\mathcal U_+$ i.e. $F_{\mu}^{-1}(w)>F_\nu^{-1}(w)$, the case $w\in\mathcal U_-$ being treated in a similar way. Then by continuity of $F_\mu^{-1}$ and $F_\nu^{-1}$ at $w$, $w_n\in\mathcal U_+$ for $n$ large enough so that without loss of generality
	\[
	p(w)=\frac{F_{\mu}^{-1}(w)-F_{\nu}^{-1}(w)}{F_\nu^{-1}(\varphi(w))-F_{\nu}^{-1}(w)},\quad p(w_n)=\frac{F_{\mu}^{-1}(w_n)-F_{\nu}^{-1}(w_n)}{F_\nu^{-1}(\varphi(w_n))-F_{\nu}^{-1}(w_n)},
	\]
	$\varphi(w)=\Psi_-^{-1}(\Psi_+(w))$, and $\varphi(w_n)=\Psi_-^{-1}(\Psi_+(w_n))$, hence \eqref{aux convergence mnIT} follows from the continuity at $w$ of $F_\mu^{-1}$, $F_\nu^{-1}$ and $F_\nu^{-1}\circ\Psi_-^{-1}\circ\Psi_+$. Since the set of discontinuity points of the non-decreasing functions $F_\mu^{-1}$, $F_\nu^{-1}$, $F_\nu^{-1}\circ\Psi_-^{-1}\circ\Psi_+$ and $F_\nu^{-1}\circ\Psi_+^{-1}\circ\Psi_-$ are at most countable, we deduce by \eqref{copule 2} and \eqref{aux convergence mnIT} that it is sufficient to show for $du\,dv$-almost every $(u,v)\in(0,1)^2$
	\[
	\theta_n(F_{\mu_n}^{-1}(u),v)\underset{n\to+\infty}{\longrightarrow}\theta(F_\mu^{-1}(u),v),
	\]
	or equivalently
	\begin{equation}\label{convergence thetaN}
	(F_{\mu_n}(x^n_u-),F_{\mu_n}(x^n_u))\underset{n\to+\infty}{\longrightarrow}(F_\mu(x_u-),F_\mu(x_u))
	\end{equation}
	for $du$-almost every $u\in(0,1)$, where $x_u:=F_\mu^{-1}(u)$ and $x^n_u:=F_{\mu_n}^{-1}(u)$.
	
	Let then $u\in(0,1)$.  Since, by monotonicity, $u\mapsto (F_\mu^{-1}(u),F_\nu^{-1}(u))$ is continuous $du$-almost everywhere on $(0,1)$ and, then, the weak convergence implies that
	\begin{equation}\label{convquant}
	(F_{\mu_n}^{-1}(u),F_{\nu_n}^{-1}(u))\underset{n\to+\infty}{\longrightarrow}(F_\mu^{-1}(u),F_\nu^{-1}(u)),
	\end{equation}
	we suppose without loss of generality that this convergence holds. For $n\in\N$, define $l_n=\inf_{k\ge n}x^k_u$ and $r_n=\sup_{k\ge n}x^k_u$. Since \eqref{convquant} holds, we find that $(l_n)_{n\in\N}$, resp. $(r_n)_{n\in\N}$, is a nondecreasing, resp. nonincreasing, sequence converging to $x_u$. Due to right continuity of $F_\mu$ and left continuity of $x\mapsto F_\mu(x-)$ we have
	\[
	F_\mu(x_u-)=\lim_{p\to+\infty}F_\mu(l_p-)\quad\text{and}\quad\lim_{p\to+\infty}F_\mu(r_p)=F_\mu(x_u).
	\]
	
	By Portmanteau's theorem and monotonicity of cumulative distribution functions we have
	\[
	F_\mu(l_p-)\le\liminf_{n\to+\infty}F_{\mu_n}(l_p-)\le\liminf_{n\to+\infty}F_{\mu_n}(x^n_u-)\le\limsup_{n\to+\infty}F_{\mu_n}(x^n_u)\le\limsup_{n\to+\infty}F_{\mu_n}(r_p)\le F_\mu(r_p).
	\]
	
	By taking the limit $p\to+\infty$, we find
	\[
	F_\mu(x_u-)\le\liminf_{n\to+\infty}F_{\mu_n}(x^n_u-)\le\limsup_{n\to+\infty}F_{\mu_n}(x^n_u)\le F_\mu(x_u).
	\]
	
	This implies \eqref{convergence thetaN} as soon as $F_\mu$ is continuous at $x_u$. Suppose now that $F_\mu$ is discontinuous at $x_u$. Since $\mu$ has countably many atoms, we may suppose without loss of generality that $u\in(F_\mu(x_u-),F_\mu(x_u))$. Let $(x_n)_{n\in\N}\in\R^\N$ be the sequence associated to $x=x_u$ by \eqref{conditionJumpOfFmuInJumpOfFmun}. For $n$ large enough, we have $u\in(F_{\mu_n}(x_n-),F_{\mu_n}(x_n))$, hence $x_n=x^n_u$. Using the assumption made in \eqref{conditionJumpOfFmuInJumpOfFmun}, we get
	\begin{align*}
	\liminf_{n\to+\infty}F_{\mu_n}(x^n_u)&=\liminf_{n\to+\infty}(F_{\mu_n}(x^n_u)\wedge F_\mu(x_u))\\
	&=\liminf_{n\to+\infty}(F_{\mu_n}(x^n_u)\wedge F_\mu(x_u)-F_{\mu_n}(x^n_u-)\vee F_\mu(x_u-)+F_{\mu_n}(x^n_u-)\vee F_\mu(x_u-))\\
	&=\mu(\{x_u\})+\liminf_{n\to+\infty}(F_{\mu_n}(x^n_u-)\vee F_\mu(x_u-))\ge F_\mu(x_u),
	\end{align*}
	hence $F_{\mu_n}(x^n_u)\underset{n\to+\infty}{\longrightarrow}F_\mu(x_u)$. Similarly, $F_{\mu_n}(x^n_u-)\underset{n\to+\infty}{\longrightarrow}F_\mu(x_u-)$, which shows \eqref{convergence thetaN} and concludes the proof.
\end{proof}

\section{Appendix: adapted Wasserstein distances}
\label{sec:Lemma}
	
A useful point of view is the following: for all $\mu,\nu\in\mathcal P_\rho(\R)$ and $\pi\in\Pi(\mu,\nu)$, let $J(\pi)$ be the probability measure on $\R\times\mathcal P_\rho(\R)$ defined by
\begin{equation}\label{defMapJ}
J(\pi)(dx,dp)=\mu(dx)\,\delta_{\pi_x}(dp).
\end{equation}

Then one can readily show that for any $\mu',\nu'\in\mathcal P_\rho(\R)$ and $\pi'\in\Pi(\mu,\nu)$,
\begin{equation}\label{equivalenceAWrhoWrho}
\mathcal{AW}_\rho(\pi,\pi')=\mathcal W_\rho(J(\pi),J(\pi')),
\end{equation}
where $\R\times\mathcal P_\rho(\R)$ is of course endowed with the product metric $((x,p),(x',p'))\mapsto(\vert x-x'\vert^\rho+\mathcal W_\rho^\rho(p,p'))^{1/\rho}$. Therefore, the topology induced by $\mathcal{AW}_\rho$ coincides with the initial topology with respect to $J$. This allows us to easily derive the two following lemmas.

\begin{lemma}\label{existenceOptimalAWrhocoupling} Let $\rho\ge1$, $\mu,\nu,\mu',\nu'\in\mathcal P_\rho(\R)$ and $\pi\in\Pi(\mu,\nu),\pi'\in\Pi(\mu',\nu')$. Then there exists a coupling $\chi\in\Pi(\mu,\mu')$ optimal for $\mathcal{AW}_\rho(\pi,\pi')$, i.e. such that
	\[
	\mathcal{AW}_\rho^\rho(\pi,\pi')=\int_{\R\times\R} \left(\vert x-x'\vert^\rho + \mathcal W_\rho^\rho(\pi_x,\pi'_{x'})\right)\, \chi(dx,dx').
	\] 
\end{lemma}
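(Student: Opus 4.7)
The plan is to leverage the identity $\mathcal{AW}_\rho(\pi,\pi')=\mathcal W_\rho(J(\pi),J(\pi'))$ stated in \eqref{equivalenceAWrhoWrho} and reduce the claim to the classical existence of optimal couplings for the Wasserstein distance on a Polish space. First I would observe that $E:=\R\times\mathcal P_\rho(\R)$ endowed with the product metric $d_E((x,p),(x',p'))=(|x-x'|^\rho+\mathcal W_\rho^\rho(p,p'))^{1/\rho}$ is a Polish space (since $(\mathcal P_\rho(\R),\mathcal W_\rho)$ is Polish), and check that $J(\pi),J(\pi')\in\mathcal P_\rho(E)$: using $J(\pi)(dx,dp)=\mu(dx)\,\delta_{\pi_x}(dp)$, one has $\int_E|x|^\rho J(\pi)(dx,dp)=\int_\R|x|^\rho\mu(dx)<\infty$ and $\int_E\mathcal W_\rho^\rho(p,\delta_0)\,J(\pi)(dx,dp)=\int_\R|y|^\rho\nu(dy)<\infty$, and similarly for $J(\pi')$.

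Second, since $E$ is Polish and the cost $d_E^\rho$ is continuous and nonnegative, the standard Kantorovich existence theorem (see e.g.\ \cite[Theorem 4.1]{Vi09}) provides an optimal transport plan $\Xi\in\Pi(J(\pi),J(\pi'))$ achieving $\mathcal W_\rho^\rho(J(\pi),J(\pi'))$. I would then define $\chi$ as the image of $\Xi$ under the projection $(x,p,x',p')\mapsto(x,x')$. By construction the first marginal of $\chi$ is the $\R$-marginal of $J(\pi)$, namely $\mu$, and the second is $\mu'$, so $\chi\in\Pi(\mu,\mu')$.

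Third, the specific delta form of the marginals $J(\pi)$ and $J(\pi')$ forces $p=\pi_x$ and $p'=\pi'_{x'}$ for $\Xi$-almost every $(x,p,x',p')\in E\times E$. Substituting into the cost then yields
\[
\mathcal W_\rho^\rho(J(\pi),J(\pi'))=\int_{E\times E}d_E^\rho\,d\Xi=\int_{\R\times\R}\left(|x-x'|^\rho+\mathcal W_\rho^\rho(\pi_x,\pi'_{x'})\right)\chi(dx,dx')\ge\mathcal{AW}_\rho^\rho(\pi,\pi').
\]
Combined with \eqref{equivalenceAWrhoWrho}, all inequalities are equalities, which shows that $\chi$ attains the infimum in the definition of $\mathcal{AW}_\rho(\pi,\pi')$.

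The only delicate points are technical: one needs measurability of $x\mapsto\pi_x$ as a map to $\mathcal P_\rho(\R)$ in order to make sense of $J(\pi)$ as a Borel probability measure on $E$ (which follows from the disintegration theorem together with the Borel structure of the Wasserstein space), and the identity \eqref{equivalenceAWrhoWrho} itself, which is obtained by noting that each $\chi\in\Pi(\mu,\mu')$ lifts to a coupling of $J(\pi),J(\pi')$ via $(x,x')\mapsto(x,\pi_x,x',\pi'_{x'})$ and, conversely, any coupling of $J(\pi),J(\pi')$ projects onto such a $\chi$ because of the delta concentration of the fibers.
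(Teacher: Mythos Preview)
Your proposal is correct and follows essentially the same route as the paper's proof: both use the identification $\mathcal{AW}_\rho(\pi,\pi')=\mathcal W_\rho(J(\pi),J(\pi'))$, invoke the existence of an optimal Wasserstein coupling on the Polish space $\R\times\mathcal P_\rho(\R)$, and then exploit the fact that $J(\pi)$ and $J(\pi')$ are concentrated on the graphs of $x\mapsto\pi_x$ and $x'\mapsto\pi'_{x'}$ to collapse the optimal plan to a coupling $\chi\in\Pi(\mu,\mu')$. Your version is slightly more detailed (explicitly checking $J(\pi)\in\mathcal P_\rho(E)$ and citing a specific existence theorem), but the argument is the same.
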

\begin{remark}\label{rkexistenceOptimalAWrhocoupling} A similar statement holds when $\pi,\pi'$ have three marginals. In that case, writing $\pi(dx,dy,dz)=\mu(dx)\,\pi_x(dy,dz)$ and $\pi'(dx',dy',dz')=\mu'(dx')\,\pi'_{x'}(dy',dz')$ we define
	\[
	\mathcal{AW}_\rho^\rho(\pi,\pi')=\inf_{\chi\in\Pi(\mu,\mu')}\left(\vert x-x'\vert^\rho+\mathcal{AW}_\rho^\rho(\pi_x,\pi'_{x'})\right)\,\chi(dx,dx').
	\]
	
	Let $K(\pi)$ be the probability measure on $\R\times\mathcal P_\rho(\R\times\mathcal P_\rho(\R))$ defined by
	\[
	K(\pi)(dx,dp)=\mu(dx)\,\delta_{J(\pi_x)}(dp).
	\]
	
	Then one can readily show that
	\[
	\mathcal{AW}_\rho(\pi,\pi')=\mathcal W_\rho(K(\pi),K(\pi')),
	\]
	where $\R\times\mathcal P_\rho(\R\times\mathcal P_\rho(\R))$ is of course endowed with the product metric $
	((x,p),(x,p'))\mapsto\left(\vert x-x'\vert^\rho+\mathcal W_\rho^\rho(p,p')\right)^{1/\rho}$.	Similarly to Lemma \ref{existenceOptimalAWrhocoupling}, the latter characterisation allows us to easily see that there exists a coupling $\chi\in\Pi(\mu,\mu')$ optimal for $\mathcal{AW}_\rho(\pi,\pi')$.
\end{remark}
\begin{proof}[Proof of Lemma \ref{existenceOptimalAWrhocoupling}] Since $\R$ is Polish, so are the set $\mathcal P_\rho(\R)$ and the set of probability measures on $\R\times\mathcal P_\rho(\R)$. Hence there exists a coupling $P\in\Pi(J(\pi),J(\pi'))$ optimal for $\mathcal W_\rho(J(\pi),J(\pi'))$, i.e.
	\[
	\mathcal W_\rho^\rho(J(\pi),J(\pi'))=\int_{\R\times\mathcal P_\rho(\R)\times\R\times\mathcal P_\rho(\R)}\vert (x,p)-(x',p')\vert^\rho\,P(dx,dp,dx',dp').
	\]
	
	Since the $J(\pi)$ and $J(\pi')$ are concentrated on graphs of measurable maps, it is clear that $P(dx,dp,dx',dp')=\chi(dx,dx')\,\delta_{\pi_x}(dp)\,\delta_{\pi'_{x'}}(dp')$ for $\chi(dx,dx')=\int_{(p,p')\in\mathcal P_\rho(\R)\times \mathcal P_\rho(\R)}P(dx,dp,dx',dp')\in \Pi(\mu,\mu')$. Then
	\begin{align*}
	\mathcal{AW}_\rho^\rho(\pi,\pi')&=\mathcal W_\rho^\rho(J(\pi),J(\pi'))\\
	&=\int_{\R\times\mathcal P_\rho(\R)\times\R\times\mathcal P_\rho(\R)}\left(\vert x-x'\vert^\rho+\mathcal W_\rho^\rho(p,p')\right)\,\chi(dx,dx')\,\delta_{\pi_x}(dp)\,\delta_{\pi'_{x'}}(dp')\\
	&=\int_{\R\times\R} \left(\vert x-x'\vert^\rho + \mathcal W_\rho^\rho(\pi_x,\pi'_{x'})\right)\, \chi(dx,dx'),
	\end{align*}
	hence $\chi$ is optimal for $\mathcal{AW}_\rho(\pi,\pi')$.
\end{proof}

\begin{lemma}\label{AWrhoCSAW1} Let $\rho\ge1$, $\mu,\nu\in\mathcal P_\rho(\R)$, $(\mu_n)_{n\in\N},(\nu_n)_{n\in\N}\in\mathcal P_\rho(\R)^\N$, $\pi\in\Pi(\mu,\nu)$ and $(\pi_n)_{n\in\N}\in\prod_{n\in\N}\Pi(\mu_n,\nu_n)$. Then
	\begin{equation}\label{eqAWrhoCSAW1} \mathcal{AW}_\rho(\pi_n,\pi)\underset{n\to+\infty}{\longrightarrow}0\iff\mathcal{AW}_1(\pi_n,\pi)+\mathcal W_\rho(\mu_n,\nu)+\mathcal W_\rho(\nu_n,\nu)\underset{n\to+\infty}{\longrightarrow}0.
	\end{equation}
\end{lemma}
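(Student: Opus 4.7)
The plan is to exploit the identity \eqref{equivalenceAWrhoWrho}, which turns $\mathcal{AW}_\rho$ into an ordinary Wasserstein distance on the Polish space $\R\times\mathcal P_\rho(\R)$, and then apply the standard characterization that $\mathcal W_\rho$-convergence is equivalent to weak convergence plus convergence of the $\rho$-th moment against some (hence any) reference point (see for instance \cite[Theorem 6.9]{Vi09}).

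For the direct implication, suppose $\mathcal{AW}_\rho(\pi_n,\pi)\to 0$. Since $\mathcal W_1\le\mathcal W_\rho$ on probability measures (Jensen's inequality) and $a+b\le 2^{1-1/\rho}(a^\rho+b^\rho)^{1/\rho}$ on $\R_+^2$, taking an $\mathcal{AW}_\rho$-optimal $\chi\in\Pi(\mu_n,\mu)$ provided by Lemma \ref{existenceOptimalAWrhocoupling}, plugging it into the definition of $\mathcal{AW}_1$, and applying Jensen for the concave function $t\mapsto t^{1/\rho}$ yields $\mathcal{AW}_1(\pi_n,\pi)\le 2^{1-1/\rho}\mathcal{AW}_\rho(\pi_n,\pi)\to 0$. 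Moreover, $\mathcal W_\rho^\rho(\mu_n,\mu)\le\int|x-x'|^\rho\chi(dx,dx')\le\mathcal{AW}_\rho^\rho(\pi_n,\pi)$ trivially. For the second marginal, glue $\chi$ with an optimal $\mathcal W_\rho$-coupling $K_{x,x'}\in\Pi((\pi_n)_x,\pi_{x'})$ (measurable selection being standard); the resulting measure on $(\R\times\R)^2$ is an element of $\Pi(\pi_n,\pi)$ whose cost under $(x,y,x',y')\mapsto|x-x'|^\rho+|y-y'|^\rho$ equals $\int(|x-x'|^\rho+\mathcal W_\rho^\rho((\pi_n)_x,\pi_{x'}))\chi(dx,dx')$. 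This shows $\mathcal W_\rho(\pi_n,\pi)\le C_\rho\,\mathcal{AW}_\rho(\pi_n,\pi)$, and projecting to the second marginal gives $\mathcal W_\rho(\nu_n,\nu)\to 0$.

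For the converse, suppose $\mathcal{AW}_1(\pi_n,\pi)+\mathcal W_\rho(\mu_n,\mu)+\mathcal W_\rho(\nu_n,\nu)\to 0$. By \eqref{equivalenceAWrhoWrho} (applied with $\rho=1$) we have $\mathcal W_1(J(\pi_n),J(\pi))\to 0$, which on the Polish space $\R\times\mathcal P_1(\R)$ implies weak convergence $J(\pi_n)\rightharpoonup J(\pi)$. Choosing $(x_0,p_0)=(0,\delta_0)$ as reference point, the product metric gives $d((x,p),(x_0,p_0))^\rho=|x|^\rho+\int|y|^\rho p(dy)$, so that integration against $J(\pi_n)$ yields
\[
\int|x|^\rho\mu_n(dx)+\int|y|^\rho\nu_n(dy)\underset{n\to+\infty}{\longrightarrow}\int|x|^\rho\mu(dx)+\int|y|^\rho\nu(dy)
\]
thanks to the assumed $\mathcal W_\rho$-convergence of the marginals. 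Weak convergence together with convergence of this $\rho$-th moment integral is exactly the criterion for $\mathcal W_\rho(J(\pi_n),J(\pi))\to 0$, which by \eqref{equivalenceAWrhoWrho} is $\mathcal{AW}_\rho(\pi_n,\pi)\to 0$.

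The only genuine care required is in the converse: one must verify that $J(\pi)$ and each $J(\pi_n)$ lie in $\mathcal P_\rho(\R\times\mathcal P_\rho(\R))$ for the chosen product metric (which is immediate from the moment bound above), and that the weak-convergence-plus-moment characterization of $\mathcal W_\rho$ applies on the non-locally-compact Polish space $\R\times\mathcal P_\rho(\R)$. The remaining work consists of the Jensen-type inequalities and the kernel gluing already described.
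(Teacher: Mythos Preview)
Your proof is correct and follows essentially the same approach as the paper's: both exploit the identification \eqref{equivalenceAWrhoWrho} together with the standard fact that $\mathcal W_\rho$-convergence is equivalent to $\mathcal W_1$-convergence plus convergence of the $\rho$-th moments, after computing the $\rho$-th moment of $J(\pi)$ at the reference point $(0,\delta_0)$ as $\int_\R|x|^\rho\,\mu(dx)+\int_\R|y|^\rho\,\nu(dy)$. You spell out the forward implication more explicitly via Jensen-type inequalities and a gluing argument, whereas the paper handles both directions simultaneously through the abstract equivalence.
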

\begin{proof} Clearly, $\int_{\R\times\mathcal P_\rho(\R)}\vert x\vert^\rho\,J(\pi)(dx,dp)=\int_\R\vert x\vert^\rho\,\mu(dx)$, and
	\[
	\int_{\R\times\mathcal P_\rho(\R)}\mathcal W_\rho^\rho(p,\delta_0)\,J(\pi)(dx,dp)=\int_\R\int_\R\vert y\vert^\rho\,\pi_x(dy)\,\mu(dx)=\int_\R\vert y\vert^\rho\,\nu(dy),
	\]
	so $\pi$ and $J(\pi)$ have equal $\rho$-th moments. Since convergence in $\mathcal W_\rho$ is equivalent to convergence in $\mathcal W_1$ coupled with convergence of the $\rho$-th moments, we deduce from \eqref{equivalenceAWrhoWrho} that
	\[ \mathcal{AW}_\rho(\pi_n,\pi)\underset{n\to+\infty}{\longrightarrow}0\iff\mathcal{AW}_1(\pi_n,\pi)+\left\vert\int_\R\vert x\vert^\rho\,\mu_n-\int_\R\vert x\vert^\rho\,\mu(dx)\right\vert+\left\vert\int_\R\vert y\vert^\rho\,\nu_n(dy)-\int_\R\vert y\vert^\rho\,\nu(dy)\right\vert\underset{n\to+\infty}{\longrightarrow}0.
	\]
	
	Since $\mathcal W_1\le\mathcal{AW}_1$ and  $\mathcal W_1$-convergence of the couplings implies that of their respective marginals, using the fact that convergence in $\mathcal W_\rho$ is equivalent to convergence in $\mathcal W_1$ coupled with convergence of the $\rho$-th moments again, we conclude that the right-hand side is clearly equivalent to
	\[
	\mathcal{AW}_1(\pi_n,\pi)+\mathcal W_\rho(\mu_n,\nu)+\mathcal W_\rho(\nu_n,\nu)\underset{n\to+\infty}{\longrightarrow}0,
	\]
	which proves \eqref{eqAWrhoCSAW1}.
\end{proof}

For $\pi=\mu\times\pi_x,\pi'=\mu'\times\pi'_{x'}\in\mathcal P_\rho(\R\times\R)$, their nested Wasserstein distance of order $\rho$ is defined by
\[
\mathcal W_\rho^{nd}(\pi,\pi'):=\inf_{\eta\in\Pi_{bc}(\pi,\pi')}\left(\int_{\R\times \R\times\R\times\R}\left(\vert x-x'\vert^\rho+\vert y-y'\vert^\rho\right)\,\eta(dx,dy,dx',dy')\right)^{1/\rho},
\]
where $\Pi_{bc}\in(\pi,\pi')$ denotes the set of bicausal couplings between $\pi$ and $\pi'$: a coupling $\eta\in\Pi(\pi,\pi')$ is called bicausal iff
\begin{equation}\label{defBicausalMesure}
\int_{y\in\R}\eta(dx,dy,dx',dy')=\pi'(dx',dy')\,\chi_{x'}(dx)\quad\textrm{and}\quad\int_{y'\in\R}\eta(dx,dy,dx',dy')=\pi(dx,dy)\,\chi_x(dx'),
\end{equation}
where with a slight abuse of notation,
\begin{equation}\label{defChi}
\chi(dx,dx)=\int_{(y,y')\in\R\times\R}\eta(dx,dy,dx',dy')=\mu(dx)\,\chi_x(dx')=\mu'(dx')\,\chi_{x'}(dx).
\end{equation}

\begin{lemma}\label{lemmaBicausal} Let $\pi,\pi'$ be two probability measures on $\R\times\R$ with respective first marginals $\mu$ and $\mu'$. Let $\eta\in\Pi(\pi,\pi')$, $\chi$ be defined by \eqref{defChi} and $(\gamma_{(x,x')}(dy,dy'))_{(x,x')\in\R\times\R}$ be a probability kernel such that $\eta(dx,dy,dx',dy')=\chi(dx,dx')\,\gamma_{(x,x')}(dy,dy')$. Then $\eta\in\Pi_{bc}(\pi,\pi')$ iff
	\begin{equation}\label{caracAW=Wnd}
	\chi(dx,dx')\textrm{-almost everywhere},\quad \gamma_{(x,x')}\in\Pi(\pi_x,\pi'_{x'}).
	\end{equation}
\end{lemma}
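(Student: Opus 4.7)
The plan is to translate both sides of the claimed equivalence into simple statements about the marginals of the kernel $\gamma_{(x,x')}$, and then observe that they coincide line by line. Denote by $\gamma^1_{(x,x')}(dy)=\int_{y'\in\R}\gamma_{(x,x')}(dy,dy')$ and $\gamma^2_{(x,x')}(dy')=\int_{y\in\R}\gamma_{(x,x')}(dy,dy')$ the two marginals of $\gamma_{(x,x')}$. With the factorisation $\eta(dx,dy,dx',dy')=\chi(dx,dx')\,\gamma_{(x,x')}(dy,dy')$, a direct application of Fubini gives
\[
\int_{y\in\R}\eta(dx,dy,dx',dy')=\chi(dx,dx')\,\gamma^2_{(x,x')}(dy'),\quad\int_{y'\in\R}\eta(dx,dy,dx',dy')=\chi(dx,dx')\,\gamma^1_{(x,x')}(dy).
\]

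Using the definition \eqref{defChi} to rewrite $\chi(dx,dx')=\mu'(dx')\,\chi_{x'}(dx)=\mu(dx)\,\chi_x(dx')$ and the disintegrations $\pi(dx,dy)=\mu(dx)\,\pi_x(dy)$, $\pi'(dx',dy')=\mu'(dx')\,\pi'_{x'}(dy')$, the right-hand sides of the two bicausality equations in \eqref{defBicausalMesure} become respectively $\chi(dx,dx')\,\pi'_{x'}(dy')$ and $\chi(dx,dx')\,\pi_x(dy)$. Hence the first bicausality identity is equivalent to the equality of measures $\chi(dx,dx')\,\gamma^2_{(x,x')}(dy')=\chi(dx,dx')\,\pi'_{x'}(dy')$ on $\R\times\R\times\R$, and the second one to $\chi(dx,dx')\,\gamma^1_{(x,x')}(dy)=\chi(dx,dx')\,\pi_x(dy)$.

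By the essential uniqueness of disintegrations on Polish spaces, each of these two equalities of measures is equivalent to the corresponding $\chi(dx,dx')$-a.e.\ equality of the kernels, i.e.\ $\gamma^2_{(x,x')}=\pi'_{x'}$ and $\gamma^1_{(x,x')}=\pi_x$. Combining both conditions, we obtain that $\eta\in\Pi_{bc}(\pi,\pi')$ iff $\chi(dx,dx')$-almost everywhere, $\gamma_{(x,x')}\in\Pi(\pi_x,\pi'_{x'})$, which is exactly \eqref{caracAW=Wnd}.

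There is no real obstacle: the argument is just a careful bookkeeping of disintegrations and the only technical point is the standard measurable-selection/uniqueness-of-disintegration fact that an equality of the form $\chi(dx,dx')\,K(x,x',dy)=\chi(dx,dx')\,K'(x,x',dy)$ between two probability kernels is equivalent to $K(x,x',\cdot)=K'(x,x',\cdot)$ for $\chi$-a.e.\ $(x,x')$, which holds on $\R\times\R$.
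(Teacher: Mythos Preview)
Your proof is correct and follows essentially the same approach as the paper's own proof: both rewrite each bicausality identity as an equality of measures of the form $\chi(dx,dx')\,K(x,x',\cdot)=\chi(dx,dx')\,K'(x,x',\cdot)$ and then invoke (implicitly in the paper, explicitly in your write-up) the essential uniqueness of disintegrations to deduce the $\chi$-a.e.\ equality of the kernel marginals with $\pi_x$ and $\pi'_{x'}$.
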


We deduce from Lemma \ref{lemmaBicausal} that
\begin{align}\label{Wnd=AW}\begin{split}
\left(\mathcal W_\rho^{nd}(\pi,\pi')\right)^\rho&=\inf_{\chi\in\Pi(\mu,\mu')}\int_{\R\times \R}\left(\vert x-x'\vert^\rho+\inf_{\gamma_{(x,x')}\in\Pi(\pi_x,\pi'_{x'})}\int_{\R\times\R}\vert y-y'\vert^\rho\,\gamma_{(x,x')}(dy,dy')\right)\,\chi(dx,dx')\\
&=\inf_{\chi\in\Pi(\mu,\mu')}\int_{\R\times\R} \left(\vert x-x'\vert^\rho + \mathcal W_\rho^\rho(\pi_x,\pi'_{x'})\right)\, \chi(dx,dx')=\mathcal{AW}_\rho^\rho(\pi,\pi'),
\end{split}\end{align}
hence nested and adapted Wasserstein distances coincide.
\begin{proof}[Proof of Lemma \ref{lemmaBicausal}] We can rephrase \eqref{caracAW=Wnd} as
	\begin{equation}\label{caracAW=Wnd2}
	\chi(dx,dx')\int_{y'\in\R}\gamma_{(x,x')}(dy,dy')=\chi(dx,dx')\,\pi_x(dy)\quad\textrm{and}\quad\chi(dx,dx')\int_{y\in\R}\gamma_{(x,x')}(dy,dy')=\chi(dx,dx')\,\pi'_{x'}(dy').
	\end{equation}
	
	Since $\chi(dx,dx')\int_{y'\in\R}\gamma_{(x,x')}(dy,dy')=\int_{y'\in\R}\eta(dx,dy,dx',dy')$, $\chi(dx,dx')\int_{y\in\R}\gamma_{(x,x')}(dy,dy')=\int_{y\in\R}\eta(dx,dy,dx',dy')$, $\chi(dx,dx')\,\pi_x(dy)=\pi(dx,dy)\,\chi_x(dx')$ and $\chi(dx,dx')\,\pi'_{x'}(dy')=\pi'(dx',dy')\,\chi_{x'}(dx)$, we deduce that \eqref{caracAW=Wnd2} and therefore \eqref{caracAW=Wnd} is equivalent to \eqref{defBicausalMesure}, that is $\eta$ is bicausal.
\end{proof}

	\bibliography{3-biblio}{}

\begin{thebibliography}{10}

\bibitem{Al81}
D.~J. Aldous.
\newblock Weak convergence and general theory of processes.
\newblock Unpublished incomplete draft of monograph; Department of Statistics,
  University of California, Berkeley, CA 94720, July 1981.

\bibitem{Ch06}
C.~D. Aliprantis and K.~C. Border.
\newblock {\em Infinite dimensional analysis: A hitchhiker's guide}.
\newblock Springer, 3rd edition, 2006.

\bibitem{BaBaBeEd19a}
J.~Backhoff-Veraguas, D.~Bartl, M.~Beiglb{\"o}ck, and M.~Eder.
\newblock Adapted wasserstein distances and stability in mathematical finance.
\newblock {\em Finance and Stochastics}, 24(3):601--632, 2020.

\bibitem{BaBaBeEd19b}
J.~Backhoff-Veraguas, D.~Bartl, M.~Beiglb{\"o}ck, and M.~Eder.
\newblock All adapted topologies are equal.
\newblock {\em Probability Theory and Related Fields}, pages 1--48, 2020.

\bibitem{BaPa19}
J.~{Backhoff-Veraguas} and G.~{Pammer}.
\newblock {Stability of martingale optimal transport and weak optimal
  transport}.
\newblock {\em arXiv e-prints:1904.04171}, April 2019.

\bibitem{BeHePe12}
M.~Beiglb{\"o}ck, P.~{Henry-Labord{\`e}re}, and F.~Penkner.
\newblock Model-independent bounds for option prices: A mass transport
  approach.
\newblock {\em Finance and Stochastics}, 17(3):477--501, 2013.

\bibitem{BeJoMaPa1}
M.~Beiglb{\"o}ck, B.~Jourdain, W.~Margheriti, and G.~Pammer.
\newblock Approximation of martingale couplings on the line in the weak adapted
  topology.
\newblock {\em arXiv e-prints:2101.02517}, 2020.

\bibitem{BeJoMaPa2}
M.~Beiglb{\"o}ck, B.~Jourdain, W.~Margheriti, and G.~Pammer.
\newblock Monotonicity and stability of weak martingale optimal transport.
\newblock 2020.
\newblock [Online; posted October 2020,
  \url{https://cermics.enpc.fr/~margherw/Documents/stabilityWMOT.pdf}].

\bibitem{BeJu16b}
M.~Beiglb{\"o}ck and N.~Juillet.
\newblock Shadow couplings.
\newblock {\em To appear in Transactions of the AMS}.

\bibitem{BeJu16}
M.~Beiglb{\"o}ck and N.~Juillet.
\newblock On a problem of optimal transport under marginal martingale
  constraints.
\newblock {\em Annals of Probability}, 44(1):42--106, 2016.

\bibitem{BeLiOb}
M.~Beiglb{\"o}ck, T.~Lim, and J.~Ob\l\'oj.
\newblock Dual attainment for the martingale transport problem.
\newblock {\em Bernoulli}, 25(3):1640--1658, 2019.

\bibitem{BeNuTo16}
M.~{Beiglb{\"o}ck}, M.~{Nutz}, and N.~{Touzi}.
\newblock {Complete Duality for Martingale Optimal Transport on the Line}.
\newblock {\em Annals of Probability}, 45(5):3038--3074, 2017.

\bibitem{BiTa18}
J.~Bion-Nadal and D.~Talay.
\newblock On a wasserstein-type distance between solutions to stochastic
  differential equations.
\newblock {\em Annals of Applied Probability}, 29(3):1609--1639, 2019.

\bibitem{De18b}
H.~De~March.
\newblock Local structure of multi-dimensional martingale optimal transport.
\newblock {\em arXiv e-prints:1805.09469}, November 2018.

\bibitem{De18}
H.~De~March.
\newblock Quasi-sure duality for multi-dimensional martingale optimal
  transport.
\newblock {\em arXiv e-prints:1805.01757}, May 2018.

\bibitem{DeTo17}
H.~{De March} and N.~{Touzi}.
\newblock {Irreducible convex paving for decomposition of multi-dimensional
  martingale transport plans}.
\newblock {\em Annals of Probability}, 47(3):1726--1774, 2019.

\bibitem{GaHeTo13}
A.~Galichon, P.~{Henry-Labord{\`e}re}, and N.~Touzi.
\newblock A stochastic control approach to no-arbitrage bounds given marginals,
  with an application to lookback options.
\newblock {\em Annals of Applied Probability}, 24(1):312--336, 2014.

\bibitem{GeGu}
S.~Gerhold and I.~C. G{\"u}l{\"u}m.
\newblock Peacocks nearby: Approximating sequences of measures.
\newblock {\em Stochastic Processes and their Applications}, 129(7):2406--2436,
  2019.

\bibitem{GeGu20}
S.~Gerhold and I.~C. G{\"u}l{\"u}m.
\newblock Consistency of option prices under bid--ask spreads.
\newblock {\em Mathematical Finance}, 30(2):377--402, 2020.

\bibitem{He96}
M.~F. Hellwig.
\newblock Sequential decisions under uncertainty and the maximum theorem.
\newblock {\em Journal of Mathematical Economics}, 25(4):443--464, 1996.

\bibitem{HeTato}
P.~{Henry-Labord{\`e}re}, X.~Tan, and N.~Touzi.
\newblock An explicit version of the one-dimensional brenier’s theorem with
  full marginals constraint.
\newblock {\em Stochastic Processes and their Applications}, 126(9):2800--2834,
  2016.

\bibitem{HeTo13}
P.~{Henry-Labord{\`e}re} and N.~{Touzi}.
\newblock {An explicit martingale version of the one-dimensional Brenier
  theorem}.
\newblock {\em Finance and Stochastics}, 20(3):635--668, 2016.

\bibitem{JoMa20}
B.~Jourdain and W.~Margheriti.
\newblock Martingale {Wasserstein} inequality for probability measures in the
  convex order.
\newblock {\em arXiv e-prints:2011.11599}, 2020.

\bibitem{JoMa18}
B.~Jourdain and W.~Margheriti.
\newblock A new family of one dimensional martingale couplings.
\newblock {\em Electronic Journal of Probability}, 25, 2020.

\bibitem{Ka42}
L.~V. Kantorovich.
\newblock On the translocation of masses.
\newblock {\em Doklady Akademii Nauk SSSR}, 37(7-8):199--201, 1942.

\bibitem{Las18}
R.~{Lassalle}.
\newblock {Causal transference plans and their {Monge-Kantorovich} problems}.
\newblock {\em Stochastic Analysis and Applications}, 36(3):452--484, 2018.

\bibitem{Mo81}
G.~Monge.
\newblock M\'emoire sur la th\'eorie des d\'eblais et des remblais.
\newblock {\em Histoire de l'acad\'emie {R}oyale des {S}ciences de {P}aris},
  1781.

\bibitem{PfPi12}
G.~C. Pflug and A.~Pichler.
\newblock A distance for multistage stochastic optimization models.
\newblock {\em SIAM Journal on Optimization}, 22(1):1--23, 2012.

\bibitem{PfPi14}
G.~C. Pflug and A.~Pichler.
\newblock {\em Multistage stochastic optimization}.
\newblock Springer Series in Operations Research and Financial Engineering.
  Springer, Cham, 2014.

\bibitem{PfPi15}
G.~C. Pflug and A.~Pichler.
\newblock Dynamic generation of scenario trees.
\newblock {\em Computational Optimization and Applications}, 62(3):641--668,
  2015.

\bibitem{PfPi16}
G.~C. Pflug and A.~Pichler.
\newblock From empirical observations to tree models for stochastic
  optimization: convergence properties.
\newblock {\em SIAM Journal on Optimization}, 26(3):1715--1740, 2016.

\bibitem{RaRuI}
S.~T. Rachev and L.~R{\"u}schendorf.
\newblock {\em Mass Transportation Problems: Volume I: Theory}.
\newblock Springer Science \& Business Media, 1998.

\bibitem{Revuz-Yor}
D.~Revuz and M.~Yor.
\newblock {\em Continuous {Martingales} and {Brownian} {Motion}}.
\newblock Grundlehren der mathematischen {Wissenschaften}. Springer-Verlag,
  Berlin Heidelberg, 3 edition, 1999.

\bibitem{Ru84}
L.~R{\"u}schendorf.
\newblock On the minimum discrimination information theorem.
\newblock {\em Statistics \& Decisions Supplement Issue}, 1:263--283, 1984.

\bibitem{St65}
V.~Strassen.
\newblock The existence of probability measures with given marginals.
\newblock {\em Annals of Mathematical Statistics}, 36(2):423--439, 1965.

\bibitem{Vi09}
C.~Villani.
\newblock {\em Optimal Transport, Old and New}, volume 338 of {\em Grundlehren
  der mathematischen Wissenschaften}.
\newblock Springer, 2009.

\bibitem{Wi20}
J.~Wiesel.
\newblock Continuity of the martingale optimal transport problem on the real
  line.
\newblock {\em arXiv e-prints:1905.04574}, January 2020.

\end{thebibliography}
	\bibliographystyle{abbrv}
\end{document}